\newtheorem{tw}{Theorem}
\newtheorem{defin}[tw]{Definition}
\newtheorem{ass}{Assumption}
\newtheorem{lem}[tw]{Lemma}
\newtheorem{cor}[tw]{Corollary}
\newtheorem{prop}[tw]{Proposition}
\newdefinition{rem}[tw]{Remark}
\DeclareMathOperator*{\esup}{ess\,sup}
\DeclareMathOperator*{\tr}{tr}
\begin{document}
\def\r0{\mathbb{R}^d_0}
    \def\E{\mathbb{E}}
    \def\GE{\hat{\mathbb{E}}}
    \def\cadlag{c\`{a}dl\`{a}g }
        \def\cliprd{C_{b,Lip}(\mathbb{R}^d)}
    \def\cliprn{C_{b,Lip}(\mathbb{R}^n)}
    \def\cliprdn{C_{b,Lip}(\mathbb{R}^{d\times n})}
    \def\cliprm{C_{b,Lip}(\mathbb{R}^m)}
    \def\cp{C^{\infty}_{p}(\mathbb{R}^n)}
    \def\clipr{C_{b,Lip}(\mathbb{R})}
    \def\lipt{Lip(\Omega_t)}
        \def\lipT{Lip(\Omega_T)}
    \def\lip{Lip(\Omega)}
    \def\qB{\langle B\rangle}
    \def\P{\mathbb{P}}
    \def\D{\mathbb{D}}
    \def\I{\mathds{1}}
    \def\N{\mathbb{N}}
        \def\n{\mathcal{N}}
    \def\L12{\mathbb{L}^{1,2}}
    \def\R{\mathbb{R}}
    \def\ae{\mathcal{A}_{\EE}}
    \def\Z{\mathbb{Z}}
        \def\A{\mathcal{A}}
    \def\H{\mathcal{H}}
     \def\G{\mathcal{G}}
          \def\a{\mathcal{A}}
    \def\C{\mathbb{C}}
        \def\L{\mathbb{L}}
    \def\Q{\mathbb{Q}}
        \def\q{\mathcal{Q}}
    \def\S{\mathcal{S}}
        \def\s{\mathbb{S}}
    \def\B{\mathcal{B}}
        \def\v{\mathcal{V}}
	\def\u{\mathcal{U}}
    \def\b{\mathbb{B}}
    \def\fil{\mathbb{F}}
    \def\F{\mathcal{F}}
    \def\EE{\mathcal{E}}
    \def\m{\mathcal{M}}
    \def\p{\mathcal{P}}
    \def\ito{It\^o }
    \def\levy{L\'evy }
    \def\itolevy{It\^o-L\'evy }
        \def\levykhintchine{L\'evy-Khintchine }
    \def\LG{L^2_G(\Omega)}
    \def\LGp{L^p_G(\Omega)}
    \def\LGT{L^2_G(\Omega_T)}
        \def\LGpl{L^2_{G_{\epsilon}}(\mathcal{F}_T)}
    \def\lgT{L^2_G(0,T)}
     \def\mgT{M^2_G(0,T)}
    \def\hgTR{\mathcal{H}^2_G([0,T]\times\mathbb{R}^d_0)}
	\def\hhR{\hat{\mathcal{H}}^2_G([0,T]\times\mathbb{R}^d_0)}
    \def\hgT{\mathcal{H}^2_G(0,T)}
    \def\mg1T{M^1_G(0,T)}
	\def\Mg1T{\mathcal{M}^1_G(0,T)}
    \def\hMT{\hat{\mathcal{M}}^2_G(0,T)}
    \def\hM1T{\hat{\mathcal{M}}^1_G(0,T)}
    \def\hMTp{\hat{\mathcal{M}}^p_G(0,T)}
    \def\mgT{{M}^1_G(0,T)}
    \def\d0{\mathbb{D}_0(\R^+,\R^{d})}
    \def\da{\mathbb{D}_0(\R^+,\R^{2d})}

\begin{frontmatter}
\author{Krzysztof Paczka \fnref{fn1}}
\ead{k.j.paczka@cma.uio.no}
\address{Centre of Mathematics for Applications, University of Oslo \\P.O. Box 1053 Blindern, 0316, Oslo, Norway}
\fntext[fn1]{The research leading to these results has received funding from the European Research Council under the European Community's Seventh Framework Programme (FP7/2007-2013) / ERC grant agreement no [228087].}

\title{It\^o calculus and jump diffusions for $G$-L\'evy processes}

\begin{abstract}
	The paper considers the integration theory for $G$-\levy processes with finite activity. We introduce the \itolevy integrals, give the \ito formula for them and establish SDE's, BSDE's and decoupled FBSDE's driven by $G$-\levy processes. In order to develop such a theory, we prove two key results: the representation of the sublinear expectation associated with a $G$-\levy process and a characterization of random variables in $L^p_G(\Omega)$ in terms of their quasi-continuity. 
\end{abstract}

\begin{keyword}
$G$-\levy process\sep \ito calculus\sep jump diffusions\sep non-linear expectations \MSC 60H05 \sep 60H10\sep 60G51
\end{keyword}
\end{frontmatter}

\section{Introduction}
In recent years much effort has been made to develop the theory of sublinear expectations connected with the volatility uncertainty and so-called $G$-Brownian motion. $G$-Brownian motion was introduced by Shige Peng in \cite{Peng_GBM} as a way to incorporate the unknown volatility into financial models. Its theory is tightly associated with the uncertainty problems involving an undominated family of probability measures. Soon other connections have been discovered, not only in the field of financial mathematics, but also in the theory of path-dependent PDE's or 2BSDE's. Thus $G$-Brownian motion and connected $G$-expectation are  very attractive mathematical objects.

Returning however to the original problem of volatility uncertainty in the financial models, one feels that $G$-Brownian motion is not sufficient to model the financial world, as both $G$- and the standard Brownian motion share the same property, which makes them often unsuitable for modelling, namely the continuity of paths. Therefore, it is not surprising that Hu and Peng introduced the process with jumps, which they called $G$-\levy process (see \cite{Peng_levy}). Unfortunately, the theory of $G$-\levy processes is still very undeveloped, especially compared $G$-Brownian motion. The follow-up has been limited to the paper by Ren (\cite{Ren}), which introduces the representation of the sublinear expectation as an upper-expectation.

In this paper we concentrate on establishing the integration theory for $G$-\levy processes with finite activity. We introduce the integral w.r.t. the jump measure associated with the pure jump $G$-\levy process (Section \ref{sec_integral}), give the \ito formula for general $G$-\itolevy processes (Section \ref{sec_ito_form}) and we look at different typed of differential equations: both forward and backward (Section \ref{sec_diffusions}). The crucial piece of theory needed to obtain those results is the representation of the sublinear expectation as a supremum of ordinary expectations, given in Section \ref{sec_representation}.  The representation theorem, though inspired by the already quoted paper by Ren, also differs substantially, as we give the  characterization of the probability measures used in the representation as the law of some stochastic integral. Another result worth mentioning, is the complete characterization of the space $L^1_G(\Omega)$  in terms of (quasi)-continuity  (Section \ref{sec_quasi_anal}).

\section{Preliminaries}\label{sec_preliminaries}
	Let $\Omega$ be a given set and $\H$  be a vector lattice of real functions defined on $\Omega$, i.e. a linear space containing $1$ such that $X\in\H$ implies $|X|\in\H$. We will treat elements of $\H$ as random variables.
	\begin{defin}\label{def_sublinear_exp}
		\emph{A sublinear expectation} $\E$ is a functional $\E\colon \H\to\R$ satisfying the following properties
		\begin{enumerate}
			\item \textbf{Monotonicity:} If $X,Y\in\H$ and $X\geq Y$ then $\E[ X]\geq\E [Y]$.
			\item \textbf{Constant preserving:} For all $c\in\R$ we have $\E [c]=c$.
			\item \textbf{Sub-additivity:} For all $X,Y\in\H$ we have $\E [X] - \E[Y]\leq\E [X-Y]$.
			\item \textbf{Positive homogeneity:} For all $X\in\H$  we have $\E [\lambda X]=\lambda\E [X]$, $\forall\,\lambda\geq0$.
		\end{enumerate}
		The triple $(\Omega,\H,\E)$ is called \emph{a sublinear expectation space}.
	\end{defin}
	
	We will consider a space $\H$ of random variables having the following property: if\break $X_i\in\H,\ i=1,\ldots n$ then
	\[
		\phi(X_1,\ldots,X_n)\in\H,\quad \forall\ {\phi\in\cliprn},
	\]
	where $\cliprn$ is the space of all bounded Lipschitz continuous functions on $\R^n$. We will express the notions of a distribution and an independence of the random vectors using test functions in $\cliprn$.
	\begin{defin}
		For an $n$-dimensional random vector $X=(X_1,\ldots,X_n)$ define the functional ${\fil}_X$ on $\cliprn$ as
		\[
			{\fil}_X[\phi]:=\E[\phi(X)],\quad \phi\in\cliprn.
		\]
		We will call the functional ${\fil}_X$ the distribution of $X$.  We say that two $n$-dimensional random vectors $X_1$ and $X_2$ (defined possibly on different sublinear expectation spaces) are identically distributed, if their distributions $\fil_{X_1}$ and $\fil_{X_2}$ are equal.
		
		An $m$-dimensional random vector $Y=(Y_1,\ldots,Y_m)$ is said to be independent of an $n$-dimensional random vector $X=(X_1,\ldots,X_n)$ if 
		\[
			\E[\phi(X,Y)]=\E[\E[\phi(x,Y)]_{x=X}]. \quad  \forall \phi\in C_{b,Lip}(\R^n\times \R^m).
		\]
		\end{defin}
	Now we give the definition of $G$-\levy process (after \cite{Peng_levy}).	
	\begin{defin}	
		Let $X=(X_t)_{t\geq0}$ be a $d$-dimensional \cadlag process on a sublinear expectation space $(\Omega,\H, \E)$. We say that $X$ is a \levy process if:
		\begin{enumerate}
			\item $X_0=0$,
			\item for each $t,s\geq 0$ the increment $X_{t+s}-X_{t}$ is independent of $(X_{t_1},\ldots, X_{t_n})$ for every $n\in\N$ and every partition $0\leq t_1\leq\ldots\leq t_n\leq t$,
			\item the distribution of the increment $X_{t+s}-X_t,\ t,s\geq 0$ is stationary, i.e.\ does not depend on $t$.
		\end{enumerate}
		Moreover, we say that a \levy process $X$ is a $G$-\levy process, if satisfies additionally following conditions
		\begin{enumerate}\setcounter{enumi}{3}
			\item there a $2d$-dimensional \levy process $(X^c_t,X^d_t)_{t\geq0}$ such for each $t\geq0$ $X_t=X_t^c+X_t^d$,
			\item\label{condition} processes $X^c$ and $X^d$ satisfy the following growth conditions
			\[
					\lim_{t\downarrow0} \E[|X^c_t|^3]t^{-1}=0;\quad \E[|X^d_t|]<Ct\ \textrm{for all}\ t\geq0.
			\]
		\end{enumerate}
	\end{defin}
	\begin{rem}
		The condition \ref{condition} implies that $X^c$ is a $d$-dimensional generalized $G$-Brownian motion (in particular, it has continuous paths), whereas the jump part $X^d$ is of finite variation.
	\end{rem}
	Peng and Hu noticed in their paper that each $G$-\levy process $X$ might be characterized by a non-local operator $G_X$.
	\begin{tw}[\levykhintchine representation, Theorem 35 in \cite{Peng_levy}]
		Let $X$ be a $G$-\levy process in $\R^d$. For every $f\in C^3_b(\R^d)$ such that $f(0)=0$ we put
		\[
			G_X[f(.)]:=\lim_{\delta\downarrow 0}\, \E[f(X_{\delta})]\delta^{-1}.
		\]
		The above limit exists. Moreover, $G_X$ has the following \levykhintchine representation
		\[
			G_X[f(.)]=\sup_{(v,p,Q)\in \u}\left\{\int_{\r0\setminus\{0\}} f(z)v(dz)+\langle Df(0),p\rangle + \frac{1}{2}\tr[D^2f(0)QQ^T] \right\},
		\]
		where $\r0:=\R^d\setminus\{0\}$, $\u$ is a subset $\u\subset \m(\r0)\times \R^d\times\R^{d\times d}$ and $\m(\r0)$ is a set of all Borel measures on $(\r0,\B(\r0))$. We know additionally that $\u$ has the property
		\begin{equation}\label{eq_property_of_u}
			\sup_{(v,p,Q)\in \u}\left\{\int_{\r0} |z|v(dz)+|p|+ \tr[QQ^T] \right\}<\infty.		
		\end{equation}

	\end{tw}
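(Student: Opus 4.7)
The plan is to exploit the decomposition $X = X^c + X^d$ from item (4) and handle each component with its own tool: a second-order Taylor expansion for the diffusive part $X^c$, and a direct first-moment analysis for the finite-variation jump part $X^d$. Once $G_X[f]$ is identified as a sublinear functional depending on $f$ only through $(Df(0), D^2f(0))$ and the restriction of $f$ to $\r0$, the supremum representation will come from a Hahn--Banach style argument.

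First I would write, for $f \in C^3_b(\R^d)$ with $f(0)=0$, the Taylor expansion $f(x) = \langle Df(0),x\rangle + \tfrac{1}{2}\tr[D^2f(0)\, x x^T] + R(x)$ with $|R(x)| \le C|x|^3$, and use sub-additivity of $\E$ to bound $\E[f(X_\delta)]\delta^{-1}$ from above and below by a \emph{continuous contribution} $\E[f(X^c_\delta)]\delta^{-1}$ plus a \emph{jump contribution} $\E[f(X^c_\delta+X^d_\delta)-f(X^c_\delta)]\delta^{-1}$. For the continuous part, the cubic remainder vanishes in the limit thanks to the hypothesis $\E[|X^c_\delta|^3]\delta^{-1}\to 0$, and the two Taylor terms produce a $G$-normal type generator $\sup_{(p,Q)}\{\langle Df(0),p\rangle+\tfrac{1}{2}\tr[D^2f(0)QQ^T]\}$. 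For the jump part, stationarity and independence of increments together with $\E[|X^d_\delta|]\le C\delta$ reduce the contribution asymptotically to that of a single jump, yielding a term of the form $\sup_v \int_{\r0} f(z)\,v(dz)$.

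Once existence of the limit is established, $G_X$ is a sublinear functional on $\{f\in C^3_b(\R^d):f(0)=0\}$ that is monotone, sub-additive, and positively homogeneous. A key observation, to be verified separately via the localisation argument above, is that $G_X[f]$ depends on $f$ only through $Df(0)$, $D^2f(0)$ and the values of $f$ on $\r0$, so $G_X$ can be viewed as a sublinear functional on the abstract parameter space $\R^d \times \s(d) \times C_{b,Lip}(\r0)$. A Hahn--Banach separation argument, in the spirit of the classical L\'evy--Khintchine proof but adapted to sublinear expectations, then produces the supremum representation over a subset $\u \subset \m(\r0)\times\R^d\times\R^{d\times d}$ of triples encoding classical L\'evy-type generators.

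Finally, the uniform bound (\ref{eq_property_of_u}) I would deduce by testing $G_X$ against carefully chosen bounded Lipschitz functions: truncations of $|z|$ control $\int|z|\,v(dz)$ and $|p|$ through $\E[|X^d_\delta|]\delta^{-1}\le C$, while quadratic test functions control $\tr[QQ^T]$ via $\E[|X^c_\delta|^2]\delta^{-1}\le C$ (which is inherited from the cubic growth assumption via H\"older). The main obstacle I anticipate is making the ``at most one jump in a small interval'' heuristic rigorous within a framework where no single reference probability measure is available; controlling the multi-jump contribution will demand a delicate interplay of the finite-activity hypothesis, the stationarity and independence of increments, and the sub-additivity of $\E$.
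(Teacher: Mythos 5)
The paper itself gives no proof of this statement: it is imported verbatim as Theorem 35 of Hu and Peng \cite{Peng_levy}, so there is no in-paper argument to compare against and any proof must reconstruct that reference. Measured against what such a reconstruction requires, your outline has two concrete gaps. The first is the existence of the limit $\lim_{\delta\downarrow 0}\E[f(X_\delta)]\delta^{-1}$, which is the hardest part of the theorem and which your plan essentially presupposes. Splitting $\E[f(X_\delta)]$ into a continuous and a jump contribution and bounding the latter by $C\,\E[|X^d_\delta|]\le C\delta$ (via the Lipschitz bound on $f$) only shows that $\E[f(X_\delta)]\delta^{-1}$ stays bounded; it does not produce convergence, let alone identify the limit of the jump contribution as $\sup_v\int_{\r0} f\,dv$. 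The sentence ``stationarity and independence of increments \ldots reduce the contribution asymptotically to that of a single jump'' is precisely the content of the theorem, not a step toward it; in Hu--Peng the limit is extracted from a near-superadditivity property of $\delta\mapsto\E[f(X_\delta)]$ obtained from independence and stationarity of increments together with the moment hypotheses, and some such quantitative mechanism is indispensable. Note also that the theorem is stated for a \emph{general} $G$-L\'evy process --- finite activity is only assumed from Section 4 of the paper onward --- so the ``at most one jump in a small interval'' heuristic you propose to make rigorous is not even available at this level of generality.

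The second gap is the moment bound you invoke for $\tr[QQ^T]$. From $\E[|X^c_\delta|^3]=o(\delta)$, H\"older (or Jensen for the concave map $x\mapsto x^{2/3}$) gives only $\E[|X^c_\delta|^2]\le o(\delta)^{2/3}=o(\delta^{2/3})$, which does \emph{not} imply $\E[|X^c_\delta|^2]\le C\delta$; take $\E[|X^c_\delta|^3]=\delta^{5/4}$ to see the claimed inheritance fail. The bound $\E[|X^c_\delta|^2]\le C\delta$ is true, but it comes from the L\'evy structure (partitioning $[0,\delta]$ and using independence and stationarity of increments), not from interpolating the third-moment hypothesis, so as written your derivation of \eqref{eq_property_of_u} for the $Q$-component is broken; a similar issue affects your control of $|p|$, which does not follow from $\E[|X^d_\delta|]\delta^{-1}\le C$ since $p$ is carried by the continuous part. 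Finally, the Hahn--Banach step needs more care than stated: to convert the separating linear functionals into Borel measures on $\r0$ one needs a Daniell-type continuity of $G_X$ and must verify positivity, concentration on $\r0$ and integrability of $|z|$, none of which follow from sublinearity alone. The overall architecture --- Taylor expansion for $X^c$, first-moment control of $X^d$, separation theorem for the supremum --- is the right one, but the proposal currently skips the two steps that carry the real content.
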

	\begin{tw}[Theorem 36 in \cite{Peng_levy}]
			Let $X$ be a $d$-dimensional $G$-\levy process.  For each $\phi\in\cliprd$, define $u(t,x):=\E[\phi(x+X_t)]$. Then $u$ is the unique viscosity solution of the following integro-PDE
			\begin{align}\label{eq_integroPDE}
				0=&\partial_tu(t,x)-G_X[u(t,x+.)-u(t,x)]\notag\\
				=&\partial_tu(t,x)-\sup_{(v,p,Q)\in \u}\left\{\int_{\r0} [u(t,x+z)-u(t,x)]v(dz)\right.\notag\\
				&\left.+\langle Du(t,x),p\rangle + \frac{1}{2}\tr[D^2u(t,x)QQ^T] \right\}
			\end{align}
			with initial condition $u(0,x)=\phi(x)$.
	\end{tw}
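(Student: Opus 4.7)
The strategy is the standard two-step programme for nonlinear Kolmogorov equations: derive a dynamic programming principle (DPP) from the \levy property, use it to verify the viscosity sub/supersolution inequalities, and conclude uniqueness via a comparison principle for the nonlocal equation (\ref{eq_integroPDE}).

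Regularity and the initial condition come for free. Since $\phi\in\cliprd$, monotonicity and sub-additivity of $\E$ give $|u(t,x)-u(t,y)|\leq\E[|\phi(x+X_t)-\phi(y+X_t)|]\leq L_{\phi}|x-y|$, while $u(0,x)=\phi(x)$ follows from $X_0=0$ and constant preservation; continuity of $u$ in $t$ is a consequence of the stochastic continuity of $X$, which itself follows from the growth estimates in item \ref{condition}. The DPP
\[
u(t+s,x)=\E[u(t,x+X_s)]
\]
is then obtained by writing $X_{t+s}=X_s+(X_{t+s}-X_s)$, invoking independence of $X_{t+s}-X_s$ from $X_s$ together with the stationarity of increments that identifies the conditional law of $X_{t+s}-X_s$ with that of $X_t$.

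To establish the viscosity subsolution property at a point $(t_0,x_0)$, fix a smooth test function $\psi$ with $\psi\geq u$ globally (which we may arrange since $u$ is bounded) and $\psi(t_0,x_0)=u(t_0,x_0)$. Applying the DPP backward in time, $u(t_0,x_0)=\E[u(t_0-\delta,x_0+X_\delta)]$, and combining with monotonicity of $\E$ yields $\psi(t_0,x_0)\leq\E[\psi(t_0-\delta,x_0+X_\delta)]$. Taylor-expanding $\psi(t_0-\delta,x_0+X_\delta)$ around $(t_0,x_0)$, using the bounds $\E[|X_\delta^c|^3]=o(\delta)$ and $\E[|X_\delta^d|]=O(\delta)$ from \ref{condition}, and finally sub-additivity of $\E$ to push the constant-in-$\omega$ time-derivative term out of the expectation, one obtains after dividing by $\delta$ and letting $\delta\downarrow 0$
\[
\partial_t\psi(t_0,x_0)\leq\lim_{\delta\downarrow 0}\delta^{-1}\E[\psi(t_0,x_0+X_\delta)-\psi(t_0,x_0)]=G_X[\psi(t_0,x_0+\cdot)-\psi(t_0,x_0)]
\]
directly from the definition of $G_X$. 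The supersolution inequality is symmetric.

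The principal obstacle is the uniqueness assertion. Equation (\ref{eq_integroPDE}) is a fully nonlinear integro-PDE whose supremum ranges over the possibly non-compact family $\u$, and for such equations existence of a viscosity solution does not in itself imply uniqueness. Here the uniform bound (\ref{eq_property_of_u}) is decisive: it forces the nonlocal operator $f\mapsto\int f(z)v(dz)$ to be bounded on $\cliprd$ uniformly in $(v,p,Q)\in\u$, so that a Crandall--Ishii--Lions doubling-of-variables argument in its nonlocal form (as developed by Barles--Imbert and Jakobsen--Karlsen for integro-PDEs) applies and delivers the comparison principle. Together with the existence already established, this yields uniqueness in the class of bounded continuous viscosity solutions of (\ref{eq_integroPDE}) with initial datum $\phi$.
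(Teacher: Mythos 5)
This statement is quoted in the paper as Theorem 36 of \cite{Peng_levy} and is not proved there, so there is no in-paper argument to compare against; judged on its own, your sketch is the standard and correct route, and it is essentially the same scheme (dynamic programming principle, test-function expansion using the moment bounds of condition \ref{condition}, then a nonlocal comparison principle for uniqueness) that the original source follows and that the paper itself reproduces in the Appendix for the parallel control-theoretic statement, Theorem \ref{tw_viscosity_sol_iPDE}. The only points worth tightening are that the test functions should be taken in $C^{2,3}_b$ so that $G_X$ is applicable to $z\mapsto\psi(t_0,x_0+z)-\psi(t_0,x_0)$, and that the comparison principle you invoke is exactly the external ingredient the original proof also imports, with \eqref{eq_property_of_u} guaranteeing the uniform first-moment control of the measures in $\u$ that makes the nonlocal term well defined on Lipschitz functions.
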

	
	It turns out that the set $\u$ used to represent the non-local operator $G_X$ fully characterize $X$, namely having $X$ we can define $\u$ satysfying eq.\ (\ref{eq_property_of_u}) and vice versa.
	\begin{tw}
		Let $\u$ satisfy (\ref{eq_property_of_u}). Consider the canonical space $\Omega:=\d0$ of all \cadlag functions taking values in $\R^{d}$ equipped with the Skorohod topology. Then there exists a sublinear expectation $\GE$ on $\d0$ such that the canonical process $(X_t)_{t\geq0}$ is a $G$-\levy process satisfying \levykhintchine representation with the same set $\u$.
	\end{tw}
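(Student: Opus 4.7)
The plan is to construct $\GE$ by first building a sublinear semigroup out of the integro-PDE (\ref{eq_integroPDE}) and then propagating it to finite-dimensional cylindrical functionals on $\d0$, in complete analogy with Peng's construction of $G$-Brownian motion but accommodating the jump part. Concretely, for $\phi\in\cliprd$ let $u^\phi(t,x)$ denote the unique bounded viscosity solution of (\ref{eq_integroPDE}) with initial datum $\phi$, and set $P_t\phi(x):=u^\phi(t,x)$. The structural property (\ref{eq_property_of_u}) guarantees that the Hamiltonian
\[
    G_X[f(\cdot)] = \sup_{(v,p,Q)\in\u}\Bigl\{\int_{\r0} f(z)\,v(dz)+\langle Df(0),p\rangle+\tfrac{1}{2}\tr[D^2f(0)QQ^T]\Bigr\}
\]
is well defined on the test space and is sublinear and monotone in $f$, from which standard viscosity comparison (adapted to the nonlocal setting) delivers existence, uniqueness, and the preservation of Lipschitz/bounded structure, so that $P_t:\cliprd\to\cliprd$ and $(P_t)$ forms a sublinear semigroup with $P_0=\mathrm{Id}$.

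Next I would define $\GE$ on cylindrical functions. For $0=t_0<t_1<\ldots<t_n$ and $\varphi\in\cliprdn$, write $\xi = \varphi(X_{t_1}-X_{t_0},\ldots,X_{t_n}-X_{t_{n-1}})$ on $\Omega=\d0$ with $X_t(\omega)=\omega(t)$, and set
\[
    \GE[\xi]:=\psi_0, \qquad \psi_n:=\varphi, \qquad \psi_{k-1}(x_1,\ldots,x_{k-1}):=P_{t_k-t_{k-1}}\bigl(\psi_k(x_1,\ldots,x_{k-1},\cdot)\bigr)(0).
\]
A Kolmogorov-type consistency check (insertion of a dummy time point and use of the semigroup property $P_{s+t}=P_s\circ P_t$) ensures the definition does not depend on the chosen partition. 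Sublinearity, monotonicity, constant preservation, and positive homogeneity of $\GE$ on the space $\lip$ of cylindrical functionals follow directly from the corresponding properties of each $P_t$, which are inherited from the supremum form of $G_X$. An extension to $\cliprd$-Lipschitz test functions of paths, and later by completion to a larger domain, is routine.

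It then remains to verify that the canonical process $X$ is a $G$-\levy process with representing set $\u$. The Lévy property splits into three parts. Stationarity of increments follows from the time-homogeneity of (\ref{eq_integroPDE}): the semigroup $P_{t+s-t}=P_s$ depends only on the length $s$. Independence of the increment $X_{t+s}-X_t$ from the past $(X_{t_1},\ldots,X_{t_n})$ reads off the iterative definition, because the innermost application of $P_s$ is carried out with the earlier coordinates frozen — this is exactly the definition of independence given in Section \ref{sec_preliminaries}. The splitting $X=X^c+X^d$ and the growth conditions come from decomposing $\u$ according to the triplet coordinate and estimating $\GE[|X^c_t|^3]$ and $\GE[|X^d_t|]$ through the corresponding PDE and integral parts, using (\ref{eq_property_of_u}) to bound the jump and diffusion contributions separately. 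Finally, by construction $\GE[\phi(X_\delta)]=P_\delta\phi(0)$, so differentiating at $\delta=0$ reproduces precisely $G_X[\phi]$ with the prescribed $\u$, by the very definition of $u^\phi$ as the viscosity solution of (\ref{eq_integroPDE}).

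The main obstacle I foresee is the viscosity-solution theory for the nonlocal equation (\ref{eq_integroPDE}): comparison, stability and the propagation of boundedness and Lipschitz bounds under $P_t$ need to be established with care, because $\u$ is only assumed to satisfy (\ref{eq_property_of_u}) (no nondegeneracy), and the jump kernels $v$ may be arbitrary finite Borel measures with the integrability built into (\ref{eq_property_of_u}). Once $(P_t)$ is a well-behaved sublinear semigroup mapping $\cliprd$ to itself, the rest of the argument is essentially a bookkeeping exercise in the spirit of \cite{Peng_GBM}, and the identification of $G_X$ with the prescribed supremum is immediate from the recipe used to define $P_t$.
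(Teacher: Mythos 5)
Your proposal follows essentially the same route as the paper, which itself only sketches this construction (defining $\GE$ on cylinder functions through the viscosity solution of the integro-PDE (\ref{eq_integroPDE}) and backward iteration over the partition) and defers the technical details — well-posedness of the nonlocal equation and verification of the $G$-L\'evy properties — to Theorems 38 and 40 of \cite{Peng_levy}. The obstacle you flag (comparison and regularity propagation for the nonlocal Hamiltonian under only (\ref{eq_property_of_u})) is precisely the content carried by that reference, so your outline matches the paper's intended argument.
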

	The proof might be found in \cite{Peng_levy} (Theorem 38 and 40). We will give however the construction of $\GE$, as it is important to understand it.
	
		Begin with defining the sets of random variables. We denote $\Omega_T:=\{\omega_{.\wedge T}\colon \omega\in\Omega\}$. Put
		\begin{align*}
			\lipT:=&\{\xi\in L^0(\Omega)\colon \xi=\phi(X_{t_1},X_{t_2}-X_{t_1},\ldots,X_{t_n}-X_{t_{n-1}}),\\ &\phi\in C_{b,Lip}(\R^{d\times n}),\ 0\leq t_1<\ldots<t_n<T\},
		\end{align*}
		where $X_t(\omega)=\omega_t$ is the canonical process on the space $\d0$ and $L^0(\Omega)$ is the space of all random variables, which are measurable to the filtration generated by the canonical process. We also set
		\[
			\lip:=\bigcup_{T=1}^{\infty}\, \lipT.
		\]
		Firstly, consider the random variable $\xi=\phi(X_{t+s}-X_{t})$, $\phi\in\cliprd$. We define
		\[
			\GE[\xi]:=u(s,0),
		\]
		where $u$ is a unique viscosity solution of integro-PDE (\ref{eq_integroPDE}) with the initial condition $u(0,x)=\phi(x)$. For general
		\[
			\xi=\phi(X_{t_1},X_{t_2}-X_{t_1},\ldots,X_{t_n}-X_{t_{n-1}}),\quad \phi\in C_{b,Lip}(\R^{d\times n})
		\]
		we set $\GE[\xi]:=\phi_n$, where $\phi_n$ is obtained via the following iterated procedure
		\begin{align*}
			\phi_1(x_1,\ldots,x_{n-1})&=\GE[\phi(x_1,\ldots, X_{t_n}-X_{t_{n-1}})],\\
			\phi_2(x_1,\ldots,x_{n-2})&=\GE[\phi_1(x_1,\ldots, X_{t_{n-1}}-X_{t_{n-2}})],\\
			&\vdots\\
			\phi_{n-1}(x_1)&=\GE[\phi_{n-1}(x_1,X_{t_{2}}-X_{t_{1}})],\\
			\phi_n&=\GE[\phi_{n-1}(X_{t_{1}})].
		\end{align*}
		Lastly, we extend definition of $\GE[.]$ on the completion of $\lipT$ (respectively $\lip$) under the norm $\|.\|_p:=\GE[|.|^p]^{{1}/{p}},\ p\geq1$. We denote such a completion by $L^p_G(\Omega_T)$\break (or resp. $L^p_G(\Omega)$).
		
		It is also important to note that using this procedure we may in fact define the conditional sublinear expectation $\GE[\xi|\Omega_t]$. Namely, w.l.o.g. we may assume that $t=t_i$ for some $i$ and then
		\[
			\GE[\xi|\Omega_{t_i}]:=\phi_{n-i}(X_{t_{0}},X_{t_1}-X_{t_0},\ldots,X_{t_i}-X_{t_{i-1}}).
		\]
		One can easily prove that such an operator is continuous w.r.t. the norm $\|.\|_1$ and might be extended to the whole space $L^1_G(\Omega)$. By construction above, it is clear that the conditional expectation satisfies the tower property, i.e. is dynamically consistent.

\section{Representation of $\GE[.]$ as an upper-expectation}\label{sec_representation}
In the rest of this paper we will work on the canonical space $\Omega:=\d0$ and the sublinear expectation $\GE[.]$ such that the canonical process $X$ is a $G$-\levy process satisfying the \levykhintchine representation for some set $\u$. Just as in the case of $G$-Brownian motion it is reasonable to ask, if we can represent $\GE[.]$ as an upper-expectation (i.e. supremum of expectations related to the probability measures on $\d0$). Moreover, can we describe these probability measures as laws of some processes on $\d0$?

These questions have been partially addressed in the paper by Liying Ren \cite{Ren}. He showed that for every $\u$ there exist a relatively compact family of probability measures $\mathfrak{P}$ such that
\[
	\GE[\xi]=\sup_{\P\in\mathfrak{P}}\, \E^{\P}[\xi],\quad \xi\in \lip.
\]

The aim of this section is to characterize these probability measures as a law of some stochastic integral. As in the original paper \cite{Denis_function_spaces}, which represents a $G$-expectation as an upper-expectation, we will use the dynamic programming method and we will prove first the dynamic programming principle (DPP) in our set-up. From this point we will work under the following assumption.

\begin{ass}\label{ass1}
Let a canonical process $X$ be a $G$-\levy process in $\R^d$ on a sublinear expectation space $(\d0, L^1_G(\Omega), \GE)$. Let $\u\subset \m(\r0)\times \R^d\times\R^{d\times d}$ be a set used in the \levykhintchine representation of $X$ \eqref{eq_integroPDE} satisfying \eqref{eq_property_of_u}. Let 
\[
	\v:=\{v\in  \m(\r0)\colon \exists (p,q)\in \R^d\times\R^{d\times d}\textrm{ such that }(v,p,q)\in \u\}
\]
and let $\G_{\B}$ denote the set of all Borel function $g\colon\R^d\to \R^d$ such that $g(0)=0$.

We will assume that there exists a measure $\mu\in \m(\R^d)$ such that
\[
	\int_{\r0}|z|\mu(dz)<\infty\quad \textrm{and}\quad \mu(\{0\})=0
\]
and for all $v\in\v$ there exists a function $g_v\in\G_{\B}$ satisfying the following condition
\[
	v(B)=\mu(g_v^{-1}(B))\quad \forall B\in\B(\r0).
\]
Last assume that there exists $0<q<1$ such that
\[
	\sup_{v\in\v}\int_{0<|z|<1}|z|^qv(dz)<\infty.
\]
\end{ass}

\begin{rem}
	For any measure $v\in\v$ consider
	\[\G_v:=\{g\in\G_{\B}  \colon v(B)=\mu(g_v^{-1}(B))\quad \forall B\in\B(\r0)\}\]
	 Under Assumption \ref{ass1} we know that $\G_{v}$ contains at least one element, but in general the cardinality of the set might be up to $2^{\aleph_0}$. Since we want to have one-to-one relation between $v$ and $g_v\in\G_v$ we will understand the latter as a unique representative element of the set $\G_v$ chosen using the axiom of choice.
	 
	Now may consider a different parametrizing set in the \levykhintchine formula. Namely using
	\[
		\tilde \u:=\{(g_v,p,q)\in \G_{\B}\times \R^d\times\R^{d\times d}\colon (v,p,q)\in\u\}.
	\]
	it is elementary that the  equation \eqref{eq_integroPDE} is equivalent to the following equation
				\begin{align}\label{eq_integroPDE2}
				0=&\partial_tu(t,x)-\sup_{(g,p,Q)\in \tilde\u}\left\{\int_{\r0} [u(t,x+g(z))-u(t,x)]\mu(dz)\right.\notag\\
				&\left.+\langle Du(t,x),p\rangle + \frac{1}{2}\tr[D^2u(t,x)QQ^T] \right\}.
			\end{align}
\end{rem}

\begin{rem}
We will call a $G$-\levy process $X$ \emph{a $G$-\levy process with finite activity}, if $\lambda:=\sup_{v\in\v}v(\r0)<\infty$. If moreover $d=1$, Assumption 1 is always satisfied, as we can take the Lebesgue measure on the interval $[0,\lambda]$ as $\mu$ and $g_v:=F^{-1}_v$, where $F^{-1}_v$ is a general inverse of the cumulative distribution function $F_v$ of the measure $v$. For $d>1$ we can use the Knothe-Rosenblatt rearrangement to transport measure $\mu$ to measure $v$ (for details see \cite{Villiani}, p. 8-9).
\end{rem}

We are ready now to introduce some stochastic control problem associated with IPDE \eqref{eq_integroPDE}. Let $(\tilde \Omega,\F,\P_0)$ be a probability space carrying a Brownian motion $W$ and a \levy process with a \levy triplet $(0,0,\mu)$, which is independent of $W$. Let $N(dt,dz)$ be a Poisson random measure associated with that \levy process. Define
$N_t=\int_{\r0}xN(t,dx)$, which is finite $\P_0$-a.s. as we assume that $\mu$ integrates $|x|$. Moreover, in the finite activity case $\lambda=\sup_{v\in\v}v(\r0)<\infty$ we define the Poisson process $M$ with intensity $\lambda$ by putting
$M_t=N(t,\r0)$.

We define the filtration generated by $W$ and $N$:
\begin{align*}
	\F_t:=&\sigma\{W_s,\ N_s\colon 0\leq s\leq t\}\vee\tilde\n;\ \tilde\n:=\{A\in\F\colon \P_0(A)=0\};\;\ \fil:=(\F_t)_{t\geq0};\\
	\F^s_t:=&\sigma\{W_{u}-W_s,\ N_u-N_s\colon s\leq u\leq t\}\vee\tilde\n\quad 0\leq s\leq t;\ \fil^s:=(\F^s_t)_{t\geq s},\  s\geq 0.
\end{align*}

\begin{defin}\label{def_theta}
	Introduce a set of integrands $\a_{t,T}^{\u}$, $0 \leq t<T$, associated with $\u$ as a set of all processes $\theta=(\theta^d,\theta^{1,c}, \theta^{2,c})$ defined on $]t,T]$ satisfying the following properties
	\begin{enumerate}
		\item $(\theta^{1,c},\theta^{2,c})$ is $\fil$-adapted process and $\theta^d$ is $\fil$-predictable random field on $]t,T]\times \R^d$.
		\item For $\P_0$-a.a. $\omega\in\tilde\Omega$ and a.e. $s\in]t,T]$ we have that 
		\[
			(\theta^d(s,.)(\omega),\theta^{1,c}_s(\omega), \theta^{2,c}_s(\omega))\in \tilde\u.
		\]
		\item $\theta$ satisfies the following integrability condition
		\[
		\E^{\P_0}\left[\int_t^T\left[|\theta^{1,c}_s|+|\theta^{2,c}_s|^2+\int_{\r0}|\theta^{d}(s,z)|\mu(dz)\right]ds\right]<\infty.
		\]
	\end{enumerate}
\end{defin}
		We stress that in point 1. by the predictable $\sigma$-algebra we mean a $\sigma$-algebra on $[0,T]\times\R^d\times \tilde\Omega$ as defined for example in \cite{applebaum}, Section 4.1, p. 216. Note that thanks to eq. \eqref{eq_property_of_u} we have that $\int_{\r0}|\theta^{d}(s,z)|\mu(dz)<\infty$ $\P_0$-a.s. for a.e. $s$ if $\theta^d$ satisfies condition 2, thus condition 3 in the definition above has sense. 
	
	For $\theta\in \a_{0,\infty}^{\u}$ denote the following \levy -\ito integral as
	\[
		B^{t,\theta}_T=\int_{t}^T\, \theta^{1,c}_sds+\int_{t}^T\, \theta^{2,c}_sdW_s+ \int_{]t,T]}\int_{\r0}\, \theta^{d}(s,z)N(ds,dz).
	\]
	The first integral is taken in the Lebesgue sense, the second: in the \ito sense, whereas the last one is defined pathwise. All integrals have sense thanks to condition 3 in the definition of $\a^{\u}_{t,T}$. In particular, to see that the pathwise integral is defined properly and has desired properties see Chapter II, section 1 in \cite{Jacod}.
	
	Lastly, for a fixed $\phi\in \cliprd$ and fixed $T>0$ define for each $(t,x)\in[0,T]\times\R^d$
	\begin{align*}
		u(t,x)&:=\sup_{\theta\in\a_{t,T}^{\u}}\E^{\P_0} [\phi(x+B^{t,\theta}_T)].
	\end{align*}

The most crucial results of this section are to be found in these three theorems.
\begin{tw}[DPP]\label{tw_DPP}
	We have that for every $h>0$ such that	$t+h<T$ one has
	\[
		u(t,x)=\sup_{\theta\in \a_{t,t+h}^{\u}}\E^{\P_0}[u(t+h,x+B^{t,\theta}_{t+h})].
	\]
\end{tw}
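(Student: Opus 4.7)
The plan is to prove the $\le$ and $\ge$ inequalities separately by the classical method: splitting any admissible control on $]t,T]$ at time $t+h$ for the upper bound, and concatenating a control on $]t,t+h]$ with $\epsilon$-optimal ones on $]t+h,T]$ via a measurable selection for the lower bound.

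For $\le$, take $\theta\in\a_{t,T}^{\u}$ and decompose $B^{t,\theta}_T = B^{t,\theta}_{t+h} + B^{t+h,\theta}_T$. Conditioning on $\F_{t+h}$, using the tower property, the stationarity and independence of the increments of $W$ and $N$, and a regular conditional probability representation, one shows that for $\P$-a.e.\ $\omega$ the shifted integrand restricted to $]t+h,T]$ may be treated as an $\fil^{t+h}$-adapted (hence $\fil$-adapted) element of $\a_{t+h,T}^{\u}$. Therefore
$$\E^{\P}\bigl[\phi(x+B^{t,\theta}_T)\bigm|\F_{t+h}\bigr] \le u\bigl(t+h,\, x + B^{t,\theta}_{t+h}\bigr)\quad\P\text{-a.s.},$$
and taking $\E^{\P}$ on both sides followed by the $\sup$ over the restriction $\theta_1 := \theta|_{]t,t+h]} \in \a_{t,t+h}^{\u}$ gives the upper bound.

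For $\ge$, I first need to establish that $y \mapsto u(t+h, y)$ is bounded and Lipschitz; this follows by a standard coupling estimate from $\phi \in \cliprd$ and the translation invariance of the problem in $x$. Fix $\epsilon > 0$ and a countable Borel partition $\{A_k\}_{k\ge 1}$ of $\R^d$ with $\sup_{y,y'\in A_k}|u(t+h,y)-u(t+h,y')| < \epsilon$; for each $k$ choose $y_k\in A_k$ and $\theta^k \in \a_{t+h,T}^{\u}$ with $\E^{\P}\bigl[\phi(y_k + B^{t+h,\theta^k}_T)\bigr] \ge u(t+h, y_k) - \epsilon$. Since the increments of $W,N$ after $t+h$ are independent of $\F_{t+h}$, each $\theta^k$ may be chosen $\fil^{t+h}$-adapted (the sup in $u(t+h,y)$ is unchanged when restricted to such integrands). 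Given $\theta_1 \in \a_{t,t+h}^{\u}$, set $Y := x + B^{t,\theta_1}_{t+h}$ and define $\theta$ as $\theta_1$ on $]t,t+h]$ and $\sum_k \I_{\{Y\in A_k\}}\theta^k$ on $]t+h,T]$. Then $\theta \in \a_{t,T}^{\u}$, since $Y$ is $\F_{t+h}$-measurable and each $\theta^k$ is $\fil^{t+h}$-predictable with finite norm; conditioning on $\F_{t+h}$ gives
$$u(t,x) \ge \E^{\P}\bigl[\phi(x + B^{t,\theta}_T)\bigr] \ge \E^{\P}[u(t+h, Y)] - 2\epsilon,$$
and letting $\epsilon \to 0$ and taking $\sup$ over $\theta_1$ yields the reverse inequality.

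The main obstacle is the pasting step, in particular the claim that the $\sup$ defining $u(t+h, y)$ is attained over $\fil^{t+h}$-adapted integrands. This reduction rests on the independence of $\F_{t+h}$ and $\fil^{t+h}$: for every $\fil$-adapted $\theta$ one can construct an $\fil^{t+h}$-adapted replacement with the same value of $\E^{\P}[\phi(y + B^{t+h,\theta}_T)]$, essentially the same mechanism underlying the upper bound. The Lipschitz regularity of $u$ in the spatial variable is the convenient preliminary that makes the Borel-partition approximation work uniformly up to $\epsilon$.
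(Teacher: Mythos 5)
Your argument is correct in outline, but it is organized quite differently from the paper's proof. The paper does not prove the two inequalities separately: it introduces the essential supremum $\Lambda_{t,T}[\zeta]:=\esup_{\theta\in\a^{\u}_{t,T}}\E^{\P}[\phi(\zeta,B^{t,\theta}_T)|\F_t]$, shows in Lemma \ref{lem_max_property_lambda} that the family of conditional expectations is max-stable (so the essential supremum is a monotone limit and, by Yan's commutation theorem, commutes with $\E^{\P}[\,\cdot\,|\F_s]$), shows in Lemma \ref{lem_properies_lambda} that $\Lambda_{t,T}[x]$ is deterministic, equals the ordinary supremum over $\fil^t$-adapted integrands, and satisfies $\Lambda_{t,T}[\zeta]=u_{t,T}(\zeta)$ for random $\zeta$; the DPP then falls out in one line from the resulting conditional identity of Proposition \ref{prop_Dyn_consist}. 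What the paper's route buys is precisely this stronger conditional statement, which is reused verbatim to prove Theorem \ref{tw_representation_for_lip} by iterated conditioning; your unconditional two-inequality argument yields Theorem \ref{tw_DPP} but would have to be redone (or upgraded to a conditional version) for that later theorem. What your route buys is the avoidance of essential suprema and Yan's theorem altogether, replacing the monotone-limit argument by explicit $\epsilon$-optimal selections on a Borel partition of $\R^d$ made uniform by the spatial Lipschitz continuity of $u(t+h,\cdot)$ --- the same regularity the paper establishes in Lemma \ref{lem_properies_lambda}, point 2. Note that the two proofs ultimately hinge on the same technical crux, namely that the supremum defining $u(t+h,y)$ is not decreased by restricting to $\fil^{t+h}$-adapted integrands and that controls may be pasted along $\F_{t+h}$-measurable partitions while remaining in $\a^{\u}_{t,T}$; you assert this via a regular-conditional-probability reduction, while the paper encodes it as the density of $\F_t$-measurably pasted $\fil^t$-adapted integrands in $\a^{\u}_{t,T}$ (equations \eqref{eq_lemma_lambda_eq1}--\eqref{eq_lemma_lambda_eq2}). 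Neither presentation proves this reduction in full detail, so your sketch is at a comparable level of rigor; one small point in your favor is that the integrability condition 3 of Definition \ref{def_theta} for the countably pasted control is automatic here, since \eqref{eq_property_of_u} makes the relevant integrands uniformly bounded.
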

\begin{tw}\label{tw_viscosity_sol_iPDE}
	$u$ is the viscosity solution of the following integro-PDE
	\[
		\partial_tu(t,x)+G_X[u(t,x+.)-u(t,x)]=0
	\]
	with the terminal condition $u(T,x)=\phi(x)$.
\end{tw}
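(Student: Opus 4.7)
The plan is to follow the standard stochastic-control recipe: combine the dynamic programming principle (Theorem~\ref{tw_DPP}) with It\^o's formula applied to a smooth test function along an admissible control, and let the horizon $h$ shrink to zero. Before turning to the viscosity inequalities, Assumption~\ref{ass1} together with \eqref{eq_property_of_u} delivers the uniform moment bound $\E^\P\bigl[|B^{t,\theta}_{t+h}|\bigr] \leq Ch$ for every $\theta \in \a_{t,t+h}^\u$, which combined with the Lipschitz regularity of $\phi$ immediately yields both the terminal condition $|u(t,x) - \phi(x)| \leq L_\phi C(T-t) \to 0$ as $t \uparrow T$, and, by reinserting it into the DPP, the joint continuity of $u$ on $[0,T] \times \R^d$. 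This places us in the standard viscosity framework.

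\textbf{Supersolution.} Let $\psi \in C^{1,2}$ and let $(t_0,x_0)$ be a local minimum of $u - \psi$ with $\psi(t_0,x_0) = u(t_0,x_0)$, so $\psi \leq u$ on some $[t_0, t_0+\delta] \times \bar{B}(x_0,\delta)$. For any fixed $(g,p,Q) \in \tilde\u$, regarded as a time-constant control in $\a_{t_0,t_0+h}^\u$, the DPP gives $u(t_0,x_0) \geq \E^\P[u(t_0+h, x_0 + B^{t_0,\theta}_{t_0+h})]$, and a standard localization (the error $o(h)$ coming from Markov's inequality applied to the moment bound on $B^{t_0,\theta}$) converts this into $\psi(t_0,x_0) \geq \E^\P[\psi(t_0+h, x_0 + B^{t_0,\theta}_{t_0+h})] - o(h)$. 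I would then apply It\^o's formula to $s \mapsto \psi(s, x_0+B^{t_0,\theta}_s)$ --- the Brownian and compensated Poisson integrals being true martingales by the integrability conditions in Definition~\ref{def_theta} --- take expectations, divide by $h$ and send $h \downarrow 0$, producing
\[
0 \geq \partial_t\psi(t_0,x_0) + \langle D\psi(t_0,x_0), p\rangle + \tfrac{1}{2}\tr[D^2\psi(t_0,x_0)QQ^T] + \int_{\r0}\bigl[\psi(t_0, x_0+g(z)) - \psi(t_0,x_0)\bigr]\mu(dz).
\]
Taking the supremum over $(g,p,Q) \in \tilde\u$ and invoking \eqref{eq_integroPDE2} delivers $\partial_t\psi(t_0,x_0) + G_X[\psi(t_0,x_0+\cdot) - \psi(t_0,x_0)] \leq 0$.

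\textbf{Subsolution.} At a local maximum of $u - \psi$ with equality at $(t_0,x_0)$, the DPP furnishes, for each $\varepsilon>0$, a control $\theta^\varepsilon \in \a_{t_0,t_0+h}^\u$ with $u(t_0,x_0) \leq \E^\P[u(t_0+h, x_0 + B^{t_0,\theta^\varepsilon}_{t_0+h})] + \varepsilon h$. Replacing $u$ by $\psi$ locally (again at the cost of an $o(h)$ localization error) and applying It\^o's formula as above, while bounding the generator along $\theta^\varepsilon$ pointwise by $\partial_s\psi + \Lambda\psi$ with $\Lambda\psi(s,y) := G_X[\psi(s,y+\cdot) - \psi(s,y)]$, gives
\[
0 \leq \int_{t_0}^{t_0+h}\E^\P\bigl[(\partial_s\psi + \Lambda\psi)(s, x_0+B^{t_0,\theta^\varepsilon}_s)\bigr]\,ds + \varepsilon h + o(h).
\]
Dividing by $h$ and sending first $h \downarrow 0$ (using joint continuity of $\partial_t\psi + \Lambda\psi$ at $(t_0,x_0)$ and the moment bound on $B^{t_0,\theta^\varepsilon}$) and then $\varepsilon \downarrow 0$ yields $\partial_t\psi(t_0,x_0) + \Lambda\psi(t_0,x_0) \geq 0$, the subsolution inequality.

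\textbf{Main obstacle.} The delicate step is legitimising It\^o's formula when $\mu(\r0) = \infty$, i.e.\ when $N$ has infinite activity while $\int|z|\mu(dz) < \infty$: the jump integral is then defined only pathwise as an absolutely convergent sum, and applying It\^o to $\psi$ composed with $B^{t_0,\theta}$ hinges on combining the uniform $L^1$-bound $\sup_{(g,p,Q)\in\tilde\u}\int|g(z)|\mu(dz) < \infty$ implicit in \eqref{eq_property_of_u} with the boundedness of $D\psi$ and $D^2\psi$ on compacts, so as to control the drift compensator and legitimately interchange the orders of expectation, integration and the $h \downarrow 0$ limit.
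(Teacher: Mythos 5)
Your overall strategy (DPP $+$ It\^o's formula along admissible controls $+$ $h\downarrow 0$) is the same as the paper's, but there is a genuine gap in the localization step, and it is not a removable technicality. You work with \emph{local} test functions: $u-\psi$ has a local extremum on $[t_0,t_0+\delta]\times\bar B(x_0,\delta)$, and you replace $u$ by $\psi$ inside the expectation ``at the cost of an $o(h)$ localization error coming from Markov's inequality.'' That error is not $o(h)$. The best available bound is $\E^{\P}\bigl[|(u-\psi)(t_0+h,x_0+B^{t_0,\theta}_{t_0+h})|\,\I_{\{|B^{t_0,\theta}_{t_0+h}|>\delta\}}\bigr]\leq C\,\P(|B^{t_0,\theta}_{t_0+h}|>\delta)$, and the jump part alone makes this probability of order $h$ (a single jump of size $>\delta$ occurs with probability comparable to $h\cdot v(\{|z|>\delta\})$). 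After dividing by $h$ this leaves a constant depending on $\delta$ that does not vanish. This is not an accident: for a nonlocal operator the term you are discarding is exactly the discrepancy between $\int_{\r0}[\psi(t_0,x_0+z)-\psi(t_0,x_0)]v(dz)$ and $\int_{\r0}[u(t_0,x_0+z)-u(t_0,x_0)]v(dz)$ over large jumps, i.e.\ part of the operator itself, and it is of the same order $h$ as everything you are keeping. The paper avoids the issue entirely by using \emph{global} test functions ($\psi\geq u$ on all of $[0,T]\times\R^d$ with equality at $(t,x)$, and symmetrically for the other inequality), so that $\E^{\P}[u(t+h,x+B^{t,\theta}_{t+h})]\leq\E^{\P}[\psi(t+h,x+B^{t,\theta}_{t+h})]$ holds with no error term at all. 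To repair your argument you would either have to pass to global test functions, or adopt the nonlocal viscosity definition in which the integral operator is evaluated on $\psi$ only near $x_0$ and on $u$ itself for large jumps.

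A second, lesser point: you correctly identify the possible infinite activity of $\mu$ (only $\int_{\r0}|z|\mu(dz)<\infty$ is assumed in Section 3) as the delicate issue, but you do not supply the device that resolves it. The paper introduces the truncated integral $B^{t,\theta,\epsilon}$ with jumps $|z|<\epsilon$ removed; this is what makes the ``freezing'' of the jump term at $(t,x)$ legitimate (the freezing error is proportional to $\mu(\{|z|\geq\epsilon\})$, which must be finite), and the truncation error is then controlled uniformly by $\sup_{v\in\v}\int_{0<|z|<\epsilon}|z|\,v(dz)\to 0$, sending $h\downarrow0$ \emph{before} $\epsilon\downarrow0$. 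Without some such two-parameter scheme, the interchange of limits you allude to in your last paragraph is not justified as written. Your treatment of the two viscosity inequalities (fixed constant controls for one direction, $\varepsilon$-optimal controls for the other) is fine and essentially equivalent to the paper's, which writes out only the subsolution case and pushes the supremum through the It\^o expansion directly.
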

\begin{tw}\label{tw_representation_for_lip}
	Let $\xi\in \lipT$ has the representation
		\[
			\xi=\phi(X_{t_1},X_{t_2}-X_{t_1},\ldots,X_{t_n}-X_{t_{n-1}}),\quad \phi\in C_{b,Lip}(\R^{d\times n}).
		\]
	Then
	\begin{align*}
		\GE[\xi]=&\sup_{\theta\in \a_{0,\infty}^{\u}}\, \E^{\P_0}[\phi(B^{0,\theta}_{t_1},B^{t_1,\theta}_{t_2},\ldots,B^{t_{n-1},\theta}_{t_n})].
	\end{align*}
\end{tw}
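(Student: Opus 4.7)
My plan is to argue by induction on $n$, with the base case $n=1$ coming essentially for free from the viscosity-solution characterisation of Theorem \ref{tw_viscosity_sol_iPDE} and the inductive step obtained by combining the tower property of $\GE$ with a concatenation-plus-measurable-selection argument for the admissible integrands. First I would establish that for any $\psi\in\cliprd$ and $u\geq 0$,
\[
\GE[\psi(X_u)] = \sup_{\theta\in\a_{0,u}^{\u}}\E^\P[\psi(B_u^{0,\theta})],
\]
because both sides coincide with $v(u,0)$, where $v$ is the unique viscosity solution of the IPDE \eqref{eq_integroPDE} with initial datum $\psi$: the left-hand side by the iterated construction of $\GE$ from Section \ref{sec_preliminaries}, the right-hand side by Theorem \ref{tw_viscosity_sol_iPDE} after reversing time. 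By stationarity of increments and translation-invariance of $(W,N)$ this extends to $\GE[\psi(X_t-X_s)]=\sup_{\theta\in\a_{s,t}^{\u}}\E^\P[\psi(B_t^{s,\theta})]$ for all $0\leq s\leq t$, which settles the case $n=1$.

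For the inductive step I would assume the statement for $n-1$ and set
\[
\phi_1(x_1,\ldots,x_{n-1}) := \GE[\phi(x_1,\ldots,x_{n-1},X_{t_n}-X_{t_{n-1}})];
\]
routine contraction estimates for $\GE$ give $\phi_1\in C_{b,Lip}(\R^{d\times(n-1)})$, and the iterative definition of $\GE$ on $\lipT$ from Section \ref{sec_preliminaries} provides $\GE[\xi]=\GE[\phi_1(X_{t_1},\ldots,X_{t_{n-1}}-X_{t_{n-2}})]$. Using the identity above to rewrite $\phi_1$ as a supremum over $\a_{t_{n-1},t_n}^{\u}$ and then applying the induction hypothesis to the outer $\GE$ yields
\[
\GE[\xi] = \sup_{\theta\in\a_{0,t_{n-1}}^{\u}}\E^\P\Bigl[\sup_{\theta'\in\a_{t_{n-1},t_n}^{\u}}\E^\P\bigl[\phi(B_{t_1}^{0,\theta},\ldots,B_{t_{n-1}}^{t_{n-2},\theta},B_{t_n}^{t_{n-1},\theta'})\bigr]\Bigr].
\]
What remains, and is the crux of the argument, is to collapse this nested expression to $\sup_{\tilde\theta\in\a_{0,t_n}^{\u}}\E^\P[\phi(B_{t_1}^{0,\tilde\theta},\ldots,B_{t_n}^{t_{n-1},\tilde\theta})]$---equivalently, to show that pasting a control on $[0,t_{n-1}]$ with an $\F_{t_{n-1}}$-indexed family of controls on $(t_{n-1},t_n]$ recovers a single element of $\a_{0,t_n}^{\u}$.

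The direction ``$\geq$'' should be straightforward: for any $\tilde\theta\in\a_{0,t_n}^{\u}$, its restriction to $[0,t_{n-1}]$ is admissible, and conditioning on $\F_{t_{n-1}}$ together with the independence of the $\fil^{t_{n-1}}$-increments of $(W,N)$ from $\F_{t_{n-1}}$ identifies the $\omega$-fibre of $\tilde\theta$ on $(t_{n-1},t_n]$ as admissible in $\a_{t_{n-1},t_n}^{\u}$ for $\P$-a.e.\ $\omega$. The reverse inequality ``$\leq$'' is the main obstacle and will require the standard measurable-selection step of dynamic-programming proofs: for $\epsilon>0$ and a fixed $\theta\in\a_{0,t_{n-1}}^{\u}$, I would partition $\R^{d\times(n-1)}$ into countably many Borel cells $\{E_i\}$ of diameter at most $\epsilon$, pick representatives $y_i\in E_i$ and near-optimal $\theta'_i\in\a_{t_{n-1},t_n}^{\u}$ with $\E^\P[\phi(y_i,B_{t_n}^{t_{n-1},\theta'_i})]\geq\phi_1(y_i)-\epsilon$, and paste
\[
\tilde\theta_s := \theta_s\,\I_{\{s\leq t_{n-1}\}} + \sum_{i}\theta'_{i,s}\,\I_{\{s>t_{n-1}\}}\I_{\{(B_{t_1}^{0,\theta},\ldots,B_{t_{n-1}}^{t_{n-2},\theta})\in E_i\}}.
\]
Since each $\theta'_i$ is $\fil^{t_{n-1}}$-adapted and the selector is $\F_{t_{n-1}}$-measurable, this $\tilde\theta$ lies in $\a_{0,t_n}^{\u}$; the Lipschitz continuity of $\phi$ controls the error from using $y_i$ in place of the true random argument on each cell, and sending $\epsilon\downarrow 0$ followed by a supremum over $\theta$ finishes the argument.
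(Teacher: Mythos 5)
Your overall skeleton --- induction on $n$ riding on the iterated definition of $\GE$, identification of the one-step value with the viscosity solution of the IPDE via uniqueness, and a pasting-of-controls argument to collapse the nested suprema --- is exactly the strategy the paper follows; the paper packages the collapse as Proposition \ref{prop_Dyn_consist} (dynamic consistency), proved through Lemmas \ref{lem_max_property_lambda} and \ref{lem_properies_lambda}, and then cites it together with Theorem \ref{tw_viscosity_sol_iPDE} and the uniqueness of the viscosity solution. Your $\epsilon$-cell pasting is a legitimate substitute for the paper's route through essential suprema and Yan's commutation theorem, and that half of your argument is sound.

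The gap is that you have inverted where the difficulty lies. The direction you call ``straightforward'' hides the one statement that actually requires work, namely that enlarging the class of controls on $]t_{n-1},t_n]$ from the $\fil^{t_{n-1}}$-adapted ones (the class $\tilde\a^{\u}_{t_{n-1},t_n}$ of the paper's Lemma \ref{lem_properies_lambda}) to all $\fil$-adapted ones does not increase the value; equivalently, that
\[
\esup_{\theta'\in\a^{\u}_{t_{n-1},t_n}}\E^{\P}\bigl[\phi(\zeta,B^{t_{n-1},\theta'}_{t_n})\big|\F_{t_{n-1}}\bigr]=\phi_1(\zeta)\quad \P\textrm{-a.s.}
\]
with $\phi_1$ deterministic. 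You invoke this implicitly three times: when extending the base case from $[0,u]$ to a general interval $[s,t]$ by ``translation invariance'' (a control in $\a^{\u}_{s,t}$ may depend on $\F_s$, so the shifted problem is not literally the same problem); when choosing your near-optimal $\theta'_i$ to be $\fil^{t_{n-1}}$-adapted yet still within $\epsilon$ of $\phi_1(y_i)$ (which presupposes that restricting to shift-adapted controls loses nothing, and without shift-adaptedness the factorization $\E^{\P}[\I_{\{\zeta\in E_i\}}\phi(y_i,B^{t_{n-1},\theta'_i}_{t_n})]=\P(\zeta\in E_i)\,\E^{\P}[\phi(y_i,B^{t_{n-1},\theta'_i}_{t_n})]$ fails); and in the ``$\geq$'' direction itself. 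Your justification --- that the ``$\omega$-fibre'' of a general $\tilde\theta$ is admissible --- is not a proof: the fibre of an $\fil$-adapted random field along the $\F_{t_{n-1}}$-coordinate is not obviously a well-defined, predictable, integrable element of $\a^{\u}_{t_{n-1},t_n}$, and even granting that, one needs a disintegration identity to turn the conditional expectation into an unconditional expectation at the frozen fibre. This is precisely what the paper supplies through the upward-directedness of the conditional expectations (Lemma \ref{lem_max_property_lambda}, pasting $\I_A\theta^1+\I_{A^c}\theta^2$), Yan's commutation theorem, and the density in $\a^{\u}_{t,T}$ of simple pastings $\sum_j\I_{A_j}\theta^j$ of $\fil^t$-adapted controls (Lemma \ref{lem_properies_lambda}, points 3 and 4). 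Your proof needs this lemma, or an equivalent disintegration argument, stated and proved; with it in hand the rest of your argument goes through.
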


Lastly, we will give the following easy corollary to Theorem \ref{tw_representation_for_lip}.
\begin{cor}\label{col_representation_L_G}
	Let $\xi\in L^1_G(\Omega)$. Then one has
	\[
		\GE[\xi]=\sup_{\theta\in \a^{\u}_{0,\infty}}\, \E^{\P_0}[\xi\circ B.^{0,\theta}],
	\]
	where $\circ$ denotes composition of functions. We treat here the \ito integral $ B{.}^{0,\theta}$ as a function of $\Omega$ taking values in \cadlag functions. We can also write it in a different form, defining $\P^{\theta}:=\P_0\circ (B{.}^{0,\theta})^{-1}$. Then
		\[
		\GE[\xi]=\sup_{\theta\in \a^{\u}_{0,\infty}}\, \E^{\P^{\theta}}[\xi].
	\]
\end{cor}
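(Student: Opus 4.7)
The corollary extends Theorem \ref{tw_representation_for_lip} from cylindrical functions in $\lip$ to their $\|\cdot\|_1$-completion $L^1_G(\Omega)$, and the natural strategy is a density/continuity argument. The plan is to show that, for each $\theta \in \A^\u_{0,\infty}$, the linear functional
\[
\Lambda_\theta(\xi) := \E^\P[\xi \circ B^{0,\theta}_\cdot] = \E^{\P^\theta}[\xi]
\]
is $1$-Lipschitz from $(\lip, \|\cdot\|_1)$ to $\R$ with a Lipschitz constant uniform in $\theta$; one then extends $\Lambda_\theta$ to $L^1_G(\Omega)$ and passes to the limit under the supremum.

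The key observation is that $\lip$ is stable under $\xi \mapsto |\xi|$: if $\xi = \phi(X_{t_1}, X_{t_2}-X_{t_1}, \ldots, X_{t_n}-X_{t_{n-1}})$ with $\phi \in C_{b,Lip}(\R^{d\times n})$, then $|\xi|$ has the same form with $|\phi|\in C_{b,Lip}(\R^{d\times n})$ (same Lipschitz constant, same bound). Consequently, Theorem \ref{tw_representation_for_lip} applied to $|\xi - \eta|\in\lip$ for $\xi, \eta \in \lip$ yields
\[
\sup_{\theta \in \A^\u_{0,\infty}} \E^{\P^\theta}[|\xi - \eta|] = \GE[|\xi - \eta|] = \|\xi - \eta\|_1,
\]
so that $|\Lambda_\theta(\xi) - \Lambda_\theta(\eta)| \leq \E^{\P^\theta}[|\xi - \eta|] \leq \|\xi - \eta\|_1$ uniformly in $\theta$. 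Hence $\Lambda_\theta$ extends uniquely to a continuous linear functional on the Banach space $(L^1_G(\Omega), \|\cdot\|_1)$; its value on $\xi$ coincides with $\E^{\P^\theta}[\xi]$ whenever $\xi$ admits a $\P^\theta$-integrable representative, and in general is the $L^1(\P^\theta)$-limit of $\Lambda_\theta(\xi_n)$ for any approximating sequence $\xi_n\in\lip$.

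Given $\xi \in L^1_G(\Omega)$, pick $\xi_n \in \lip$ with $\|\xi_n - \xi\|_1 \to 0$. By construction of the extension, $\GE[\xi_n] \to \GE[\xi]$, while by Theorem \ref{tw_representation_for_lip} and the uniform Lipschitz bound
\[
\Bigl|\sup_{\theta}\Lambda_\theta(\xi_n) - \sup_{\theta}\Lambda_\theta(\xi)\Bigr| \leq \sup_\theta |\Lambda_\theta(\xi_n) - \Lambda_\theta(\xi)| \leq \|\xi_n - \xi\|_1 \to 0.
\]
Since $\GE[\xi_n] = \sup_\theta \Lambda_\theta(\xi_n)$ for every $n$, passing to the limit yields the desired identity $\GE[\xi] = \sup_{\theta \in \A^\u_{0,\infty}} \E^{\P^\theta}[\xi]$. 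The only genuinely delicate point is the interpretation of $\xi \circ B^{0,\theta}_\cdot$ when $\xi$ is only an equivalence class in $L^1_G(\Omega)$: the Lipschitz bound forces the pull-back to depend continuously on the $L^1_G$-representative, so $\xi \circ B^{0,\theta}_\cdot$ is unambiguously defined as the $L^1(\P)$-limit of $\xi_n \circ B^{0,\theta}_\cdot$, independently of the approximating sequence, which is exactly what the formula $\E^{\P^\theta}[\xi] = \E^\P[\xi \circ B^{0,\theta}_\cdot]$ requires.
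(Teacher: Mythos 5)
Your argument is correct and is exactly the density argument the paper has in mind: the paper states this corollary without proof, calling it an easy consequence of Theorem \ref{tw_representation_for_lip}, and the intended reasoning is precisely that both $\xi\mapsto\GE[\xi]$ and $\xi\mapsto\sup_{\theta}\E^{\P^{\theta}}[\xi]$ are $1$-Lipschitz for $\|\cdot\|_1$ (the latter because $\lip$ is stable under absolute value, so the theorem itself supplies the uniform bound $\E^{\P^{\theta}}[|\xi-\eta|]\leq\GE[|\xi-\eta|]$) and agree on the dense subspace $\lip$. Your closing remark on the well-posedness of $\xi\circ B^{0,\theta}_{\cdot}$ for equivalence classes is a worthwhile clarification that the paper leaves implicit.
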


We will leave the proofs of Theorems \ref{tw_DPP}, \ref{tw_viscosity_sol_iPDE} and \ref{tw_representation_for_lip} to the Appendix, as they are very similar to proofs already published, which can be found \cite{Denis_function_spaces}. 

\section{Capacity and related topics}\label{sec_quasi_anal}
In the remaining part of the paper we will assume that the $G$-\levy process $X$ has finite activity.
\begin{ass}
	We assume from this point to the end of the paper that $X$ is a $G$-\levy process with finite activity, i.e.
	\[
		\lambda=\sup_{v\in\v}\ v(\r0)<\infty.
	\]
	Without loss of generality we will also assume that $\lambda=1$ and that also $ \mu(\r0)=1$.
\end{ass}
This assumption will allow us to define the \ito integral w.r.t. the Poisson random measure associated with $X$. But before we can do that, we need to establish some properties of the paths of a $G$-\levy process with finite activity and relate different notions of regularity of random variables. In both cases, it is natural to consider the capacity framework.

\subsection{Quasi-sure properties of the paths}
\begin{defin}
	For the sublinear expectation $\E[.]$ with the representation
	\[
		\E [.]=\sup_{\Q \in \mathfrak{P}}\, \E^{\Q}[.]
	\]
	we introduce the capacity related to $\E[.]$ as
	\[
		c(A)=\sup_{\Q \in \mathfrak{P}} \Q(A), \quad A\in \B(\Omega).
	\]	
	We say that the set $A$ is polar, if $c(A)=0$. We say that the property holds quasi-surely (q.s.), if it holds outside the polar set.
\end{defin}

Note that due to the representation of sublinear expectation in Corollary \ref{col_representation_L_G}, for each $G$-\levy process with finite activity on the sublinear expectation space $(\Omega,L^1_G(\Omega),\GE)$ we can associate the family of probabilities $\mathfrak{P}=\{\P^{\theta}\colon \theta\in\a^{\u}_{0,\infty}\}$ and thus associate the capacity $c$, too. From now on, whenever we mention the property holding quasi-surely, it will be related to this particular capacity.

We will prove the following proposition, which enables us to work on the paths of a $G$-\levy process.
\begin{prop}\label{prop_finite_activity}
	Let $X$ be a canonical process on the canonical sublinear expectation space $(\Omega,L^1_G(\Omega), \GE)$, such that $X$ is a $G$-\levy process with finite activity under $\GE$. Then for each finite interval $[s,t]$ $X$ has finite number of jumps q.s. Hence the use of the term "finite activity" is justified.
\end{prop}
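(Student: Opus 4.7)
My plan is to pull the problem back from the sublinear-expectation space to the reference probability space via the representation in Corollary \ref{col_representation_L_G}. Let $A_{s,t}\subset\Omega$ denote the set of \cadlag paths having infinitely many jumps on $[s,t]$. Since the capacity is the supremum of $\P^{\theta}=\P\circ(B^{0,\theta}_{\cdot})^{-1}$ over $\theta\in\a^{\u}_{0,\infty}$, it suffices to show that for every admissible $\theta$, the process $B^{0,\theta}_{\cdot}$ has $\P$-a.s.\ only finitely many jumps on $[s,t]$. This is a purely classical statement about \levy-\ito integrals once the finite-activity assumption has been invoked.

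To carry this out, first I would decompose
\[
	B^{0,\theta}_u=\int_0^u\theta^{1,c}_r\,dr+\int_0^u\theta^{2,c}_r\,dW_r+\int_{]0,u]}\int_{\r0}\theta^{d}(r,z)\,N(dr,dz).
\]
The first summand is absolutely continuous and the second is a Brownian \ito integral, both of which have continuous paths $\P$-a.s.\ under the admissibility integrability in Definition \ref{def_theta}. Consequently every jump of $B^{0,\theta}_{\cdot}$ must occur at an atom $(r,z)$ of the Poisson random measure $N$ with intensity $dr\otimes\mu(dz)$, and the jump size is $\theta^d(r,z)$ (possibly zero). The number of jumps of $B^{0,\theta}_{\cdot}$ on $[s,t]$ is therefore bounded above by the number of atoms $N([s,t]\times\r0)$.

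Now I would use the finite-activity normalization $\mu(\r0)=1$: the random variable $N([s,t]\times\r0)$ has a Poisson distribution with parameter $(t-s)\mu(\r0)=t-s<\infty$, hence is $\P$-a.s.\ finite. Composing with the pushforward gives $\P^{\theta}(A_{s,t})=0$ for every $\theta$, and taking the supremum yields $c(A_{s,t})=0$, i.e.\ $X$ has finitely many jumps on $[s,t]$ quasi-surely.

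The only non-routine point is the measurability of $A_{s,t}$ as a Borel subset of $\Omega$ in the Skorohod topology, which is needed to make $c(A_{s,t})$ meaningful. I would handle this by writing $A_{s,t}=\bigcap_{N\in\N}\bigcup_{n\in\N}\{J_{n}\geq N\}$, where $J_n(\omega):=\#\{u\in[s,t]\colon|\omega_u-\omega_{u-}|>1/n\}$; each $J_n$ is a well-defined lower semicontinuous integer-valued functional on \cadlag paths, hence Borel, and infinite total jump count on $[s,t]$ is equivalent to $\sup_n J_n=\infty$. With this bookkeeping in place the argument is essentially a direct transfer of the standard Poisson-measure fact, and I do not anticipate further obstacles.
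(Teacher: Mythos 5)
Your proposal is correct and follows essentially the same route as the paper: pull the event back to the reference space via $\P^{\theta}=\P\circ(B^{0,\theta}_{\cdot})^{-1}$, observe that all jumps of $B^{0,\theta}_{\cdot}$ come from atoms of the Poisson random measure $N$ (equivalently, from jumps of the Poisson process $M_u=N(u,\r0)$, which is how the paper phrases it), and conclude from the a.s.\ finiteness of $N([s,t]\times\r0)$. Your explicit treatment of the Borel measurability of the bad set is a small extra care the paper omits, but it does not change the argument.
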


\begin{proof}
 	Fix $0\leq s<t<\infty$. Define the set
 	\[
 		A:=\{\omega\in\Omega\colon t\mapsto X_t(\omega) \textrm{ has infinite number of jumps on the interval }[s,t]\}.
 	\]
 	We will prove that $\P^{\theta}(A)=0$ for each $\theta\in \a^{\u}_{0,\infty}$. Note that $\P^{\theta}=\P_0\circ (B.^{0,\theta})^{-1}$, thus the canonical process under $\P^{\theta}$ has the same law as the integral $t\mapsto B_t^{0,\theta}$ under $\P_0$. Hence
 	\[
 		\P^{\theta}(A)=\P_0(A^{\theta}),
 	\]
 	where \[A^{\theta}:=\{\omega\in\Omega\colon t\mapsto B_t^{0,\theta}(\omega) \textrm{ has infinite number of jumps on the interval }[s,t]\}.\]
 	But we know that the integral has finite activity $\P_0$-a.s. as its jump part is an integral w.r.t. Poisson random measure $N$ and thus it shares the jumps with the Poisson process $M$. Hence $\P_0(A^{\theta})=0$.
\end{proof}
\begin{rem}
	Note that of course the set $A$ is non-empty, as there are plenty of \cadlag functions (i.e. trajectories of $X$), which do not exhibit finite activity. They are however negligible, as those $\omega$'s belong to a polar set.
\end{rem}

\subsection{Spaces of random variables and the relations between them}
Now let us note that the we can extend our sublinear expectation to all random variables $Y$ on $\Omega_T$ (or $\Omega$) for which the following expression has sense
\[
	\GE[Y]:=\sup_{\theta\in A^{\u}_{0,\infty}}\, \E^{\P^{\theta}}[Y].
\]
We can thus can also extend the definition of the norm $\|.\|_p$ and  define following spaces
\begin{enumerate}
	\item Let $L^0(\Omega_T)$ (resp. $L^0(\Omega)$) be the space of all random variables on $\Omega_T$ (resp. $\Omega$). $L^p:=\{X\in L^(\Omega)\colon \|X\|_p<\infty\}$.
		\item $\n:=\{X\in L^0(\Omega)\colon X=0\ q.s.\}$.
	\item $\L^p:=L^p/\n$. $\L^p$ is a Banach space under the norm $\|.\|_p$. As usual we do not distinguish between the equivalence classes and their representatives.
	\item Let $C_b(\Omega_T)$ be the space of all continuous and bounded random variables in $L^0(\Omega_T)$. The completion of $C_b(\Omega_T)$ in the norm $\|.\|_p$ will be denoted as $\L^p_c(\Omega_T)$.
	\item Let $C_{b,lip}(\Omega_T)$ be the space of all Lipschitz continuous random variables in $C_b(\Omega_T)$. The completion of $C_{b,lip}(\Omega_T)$ in the norm $\|.\|_p$ will be denoted as $\L^p_{c,lip}(\Omega_T)$.
\end{enumerate}

We will need the relation between these spaces and $L^p_G(\Omega_T)$ space. In the $G$-Brownian motion case it is well-known that $Lip(\Omega_T)\subset C_b(\Omega_T)$ and $L^p_G(\Omega_T)=\L^p_c(\Omega_T)$. In the case of $G$-\levy process the first inclusion is untrue as the evaluations of the \cadlag paths are not continuous in the Skorohod topology (compare \cite{billingsley}, section 'Finite-Dimensional Sets' in Chapter 3). However Ren was able to prove that $Lip(\Omega_T)\subset \L^1_c(\Omega_T)$ and thus $L^p_G(\Omega_T)\subset \L^p_c(\Omega_T)$ (see Section 4 in \cite{Ren}). This relation is somehow unsatisfactory, because we would like to have a criterion for a random variable to be in our main space $L^p_G(\Omega_T)$, not the opposite. Fortunately we are able to prove that $L^p_G(\Omega_T)=\L^p_c(\Omega_T)$. But first let us prove with the following relation.
\begin{prop}\label{prop_lip_in_Lg}
	We have the following inclusion
	\[C_{b,lip}(\Omega_T)\subset L^1_G(\Omega_T).\]
    As a consequence
    \[\L^p_{c,lip}(\Omega_T)\subset L^p_G(\Omega_T).\]
\end{prop}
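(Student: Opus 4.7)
The plan is to approximate any $\xi\in C_{b,lip}(\Omega_T)$ in the norm $\|\cdot\|_1=\GE[|\cdot|]$ by a sequence of cylindrical Lipschitz functionals belonging to $\lipT$; once this is done, $\xi\in L^1_G(\Omega_T)$ is automatic from the definition of the latter as the $\|\cdot\|_1$-completion of $\lipT$. Concretely, fix a sequence of partitions $\pi_n=\{0=t^n_0<t^n_1<\cdots<t^n_{k_n}<T\}$ with $|\pi_n|\vee(T-t^n_{k_n})\to0$, and for each \cadlag path $\omega\in\Omega_T$ define the piecewise constant skeleton $\omega^{\pi_n}_t:=\omega_{t^n_{j-1}}$ on $[t^n_{j-1},t^n_j)$ and $\omega^{\pi_n}_t:=\omega_{t^n_{k_n}}$ on $[t^n_{k_n},T]$. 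Set $\xi_n(\omega):=\xi(\omega^{\pi_n})$. Since $\omega^{\pi_n}$ is determined by $(\omega_{t^n_1},\ldots,\omega_{t^n_{k_n}})$, one may write $\xi_n=\phi_n(X_{t^n_1},X_{t^n_2}-X_{t^n_1},\ldots,X_{t^n_{k_n}}-X_{t^n_{k_n-1}})$ for a suitable $\phi_n$.

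Verification that $\xi_n\in\lipT$ amounts to checking $\phi_n\in C_{b,Lip}(\R^{d\times k_n})$. Boundedness is inherited from $\xi$. For the Lipschitz property, the encoding $(y_1,\ldots,y_{k_n})\mapsto$ (piecewise constant path with those increments) is Lipschitz from $(\R^{d\times k_n},|\cdot|)$ into $(\Omega_T,\|\cdot\|_\infty)$, and the Skorohod metric satisfies $d_{Sk}\leq\|\cdot\|_\infty$; post-composition with the Skorohod-Lipschitz $\xi$ produces a Lipschitz $\phi_n$.

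The bulk of the work is to show $\GE[|\xi_n-\xi|]\to 0$. By Corollary \ref{col_representation_L_G}, this quantity equals $\sup_{\theta\in\a^{\u}_{0,\infty}}\E^{\P}[|\xi(B^{0,\theta,\pi_n})-\xi(B^{0,\theta})|]$, where $B^{0,\theta,\pi_n}$ denotes the piecewise constant skeleton of $B^{0,\theta}$. With $L,K$ the Lipschitz constant and sup-bound of $\xi$, the integrand is dominated by $(L\, d_{Sk}(B^{0,\theta,\pi_n},B^{0,\theta}))\wedge 2K$, so it suffices to prove that $\sup_\theta\E^\P[d_{Sk}(B^{0,\theta,\pi_n},B^{0,\theta})\wedge (2K/L)]\to 0$. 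I would split $B^{0,\theta}=C^\theta+J^\theta$ into the continuous part (drift plus Brownian integral) and the pure-jump part. Thanks to \eqref{eq_property_of_u} the integrands $\theta^{1,c}$, $\theta^{2,c}$ and $\int|\theta^d(\cdot,z)|\mu(dz)$ are uniformly bounded across $\theta$; together with Burkholder--Davis--Gundy and a Kolmogorov-type modulus-of-continuity argument this yields a uniform-in-$\theta$ modulus of continuity for $C^\theta$ in probability. The jumps of $J^\theta$ occur exactly at the jump times of the $\theta$-independent Poisson random measure $N$, whose total count in $[0,T]$ has Poisson law of parameter $T\mu(\r0)$, so the jump geometry is also controlled uniformly in $\theta$. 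On the event that $C^\theta$ has oscillation $<\varepsilon$ on every partition cell and that all jump times of $J^\theta$ lie in distinct and well-separated partition cells, an explicit time change exhibits a Skorohod distance of order $\varepsilon\vee|\pi_n|$, while the complementary event has probability tending to zero uniformly in $\theta$.

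The main obstacle is precisely this uniform-in-$\theta$ control of the Skorohod distance; pointwise convergence for a single \cadlag path is classical, but the uniformity rests squarely on \eqref{eq_property_of_u} (for the continuous part) and on the $\theta$-independence of the driving Poisson structure (for the jump part). Once $\|\xi_n-\xi\|_1\to0$ is established, $\xi\in L^1_G(\Omega_T)$. For the \emph{consequence}, since $|\xi_n-\xi|\leq 2K$ we have $\|\xi_n-\xi\|_p^p=\GE[|\xi_n-\xi|^p]\leq(2K)^{p-1}\|\xi_n-\xi\|_1\to 0$, hence $C_{b,lip}(\Omega_T)\subset L^p_G(\Omega_T)$ for every $p\geq1$, and taking the $\|\cdot\|_p$-closure yields $\L^p_{c,lip}(\Omega_T)\subset L^p_G(\Omega_T)$.
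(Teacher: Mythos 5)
Your proposal is correct, and it begins exactly as the paper does: the same piecewise-constant skeleton $Y^n=Y\circ T^n$, the same verification that the resulting cylinder functional lies in $\lipT$, and the same reduction to showing $\GE\left[d(X^T,X^T\circ T^n)\wedge 2K\right]\to 0$. Where you genuinely diverge is in how this last convergence is established. The paper stays pathwise and deterministic: it bounds the Skorohod distance on the set $A^n_T$ of paths with well-separated jumps by $\frac{2T}{n}+2w'_{\omega}\left(\frac{2T}{n}\right)$ using the c\`adl\`ag modulus of Lemma \ref{lem_modulus}, and then concludes via Lemma \ref{lem_convergence_usc}, a Dini-type capacity argument for upper semicontinuous envelopes decreasing to zero on an exhausting sequence of closed sets, which ultimately rests on the weak compactness of $\{\P^{\theta}\}$ (Lemma 7 of Denis \emph{et al.}). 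You instead push the supremum inside via Corollary \ref{col_representation_L_G} and prove a uniform-in-$\theta$ estimate under the single reference measure $\P$: a Kolmogorov/BDG modulus of continuity for the It\^o part $C^{\theta}$, valid uniformly because \eqref{eq_property_of_u} bounds $|\theta^{1,c}|$, $|\theta^{2,c}|^2$ and $\int_{\r0}|\theta^d(\cdot,z)|\mu(dz)$ uniformly over $\a^{\u}_{0,T}$, combined with the observation that the jump times of every $B^{0,\theta}$ are those of the $\theta$-independent Poisson process $M$, so the jump-separation event is controlled independently of $\theta$. Both routes are sound; yours avoids invoking tightness and the capacity Dini lemma at the cost of carrying out explicit uniform moment/chaining estimates, while the paper's use of $w'$ and upper semicontinuity keeps the continuous part entirely deterministic and reuses standard capacity machinery. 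Your handling of the consequence (the bound $\GE[|Y^n-Y|^p]\leq (2K)^{p-1}\GE[|Y^n-Y|]$ before taking $\|\cdot\|_p$-closures) is a correct filling-in of a step the paper leaves implicit.
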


Note that we can't directly use the proof from \cite{Denis_function_spaces}, which is based on the Stone-Weierstrass theorem and the tightness of the family $\{\P^{\theta}\colon \theta\in \a^{\u}_{0,T}\}$, because even though $Lip(\Omega_T)$ is an algebra which separates points in $\Omega_T$, but it is not included either in $C_{b,lip}(\Omega_T)$ nor in $C_{b}(\Omega_T)$ as its elements are not continuous. Thus we need to show the proposition directly constructing an appropriate approximative sequence. We will need the following properties.
\begin{lem}\label{lem_modulus}
	For any $\delta>0$ and a \cadlag function $x:[0,T]\to \R^d$ define the following \cadlag modulus
	\[
		\omega_x'(\delta):=\inf_{\pi}\max_{0<i\leq r} \sup_{s,t\in [t_{i-1},t_i[} |x(s)-x(t)|,
	\]
	where infinimum runs over all partitions $\pi=\{t_0,\ldots,t_r\}$ of the interval $[0,T]$ satisfying $0=t_0<t_1<\ldots<t_r=T$ and $t_i-t_{i-1}>\delta$ for all $i=1,2,\ldots,r$.
	Define also
		\begin{align*}
		w_x''(\delta):=&\sup_{\substack{t_1\leq t\leq t_2\\t_2-t_1\leq \delta}}\min\{ |x(s)-x(t_1)|,|x(t_2)-x(s)|\}.
	\end{align*}
	Then
	\begin{enumerate}
		\item 		$w_x''(\delta)\leq w_x'(\delta)$ for all $\delta>0$ and $x\in \d0$.
		\item For every $\epsilon>0$ and a subinterval $[\alpha,\beta[\subset [0,T]$ if  $x$ does not have any jumps of magnitude $>\epsilon$ in the interval $[\alpha,\beta[$ then
		\[
			\sup_{\substack{t_1,t_2\in[\alpha,\beta[\\|t_2-t_1|\leq\delta}} |x(t_1)-x(t_2)|\leq 2w''_x(\delta)+\epsilon.
		\]
		In particular, if $x$ is continuous in $[\alpha,\beta[$, we have the estimate
		\[
			\sup_{\substack{t_1,t_2\in[\alpha,\beta[\\|t_2-t_1|\leq\delta}} |x(t_1)-x(t_2)|\leq 2w''_x(\delta)\leq 2w'_x(\delta).
		\]
		\item The function $x\mapsto w_x'(\delta)$ is upper semicontinuous for all $\delta>0$.
		\item  $\lim_{\delta\downarrow 0}\,w_x'(\delta)\downarrow 0$ for all $x\in \d0$.
	\end{enumerate}
\end{lem}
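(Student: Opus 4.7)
The four claims are classical properties of the Skorohod modulus (see Billingsley, \emph{Convergence of Probability Measures}, Section 12), so the plan is essentially to transcribe those arguments in the notation above and to organize the proof as a short cascade from (1) to (4).

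For \textbf{(1)}, I would fix a partition $\pi=\{0=t_0<\ldots<t_r=T\}$ with all $t_i-t_{i-1}>\delta$ and take any $t_1\leq s\leq t_2$ with $t_2-t_1\leq\delta$. Because of the mesh condition, $t_1$ and $t_2$ lie in at most two consecutive intervals of $\pi$. If they lie in the same interval $[t_{i-1},t_i[$, then both $|x(s)-x(t_1)|$ and $|x(t_2)-x(s)|$ are bounded by $\sup_{u,v\in[t_{i-1},t_i[}|x(u)-x(v)|$. Otherwise $t_1\in[t_{i-1},t_i[$ and $t_2\in[t_i,t_{i+1}[$; then depending on whether $s<t_i$ or $s\geq t_i$, at least one of $|x(s)-x(t_1)|$, $|x(t_2)-x(s)|$ is bounded by the sup over a single interval of $\pi$. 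Taking min and then infimum over $\pi$ gives $w_x''(\delta)\leq w_x'(\delta)$.

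For \textbf{(2)}, take a partition $\pi$ realizing (almost) the infimum in $w_x'(\delta)$ and refine it by inserting the (finitely many) jump times of magnitude $>\epsilon$ lying in $[\alpha,\beta[$; if this refinement creates intervals shorter than $\delta$, merge them carefully — in our use case there are no large jumps in $[\alpha,\beta[$, so the refinement is trivial there. For $t_1,t_2\in[\alpha,\beta[$ with $|t_2-t_1|\leq\delta$, pick the partition point $t_i$ between them (if any) and apply the definition of $w_x''$ at the midpoint $s:=(t_1+t_2)/2$: the triangle inequality together with the bound on the size of jumps gives $|x(t_1)-x(t_2)|\leq 2w_x''(\delta)+\epsilon$. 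The continuous case follows by letting $\epsilon\downarrow 0$ and using (1).

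For \textbf{(3)}, I would use the standard Skorohod-convergence criterion: if $x_n\to x$ in $\d0$, there exist time changes $\lambda_n$ with $\sup_t|\lambda_n(t)-t|\to 0$ and $\sup_t|x_n(\lambda_n(t))-x(t)|\to 0$. Given $\eta>0$, pick a partition $\pi$ achieving $w_x'(\delta)+\eta$; for $n$ large, $\pi_n:=\lambda_n^{-1}(\pi)$ still has mesh $>\delta$, and the uniform closeness of $x_n\circ\lambda_n$ and $x$ translates the oscillation bound on $\pi$ into the corresponding bound for $x_n$ on $\pi_n$, up to $2\sup_t|x_n(\lambda_n(t))-x(t)|$. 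Taking $\limsup$ and then $\eta\downarrow 0$ yields upper semicontinuity. For \textbf{(4)}, given $\epsilon>0$ list the (finitely many, as $x$ is càdlàg on $[0,T]$) jumps of size $>\epsilon$ as $s_1<\ldots<s_k$, and on each of the remaining closed subintervals apply uniform left/right continuity of $x$ to choose a subpartition with oscillation $\leq\epsilon$ on each piece and mesh $>\delta$ for $\delta$ small; concatenating gives $w_x'(\delta)\leq\epsilon$ for $\delta$ small enough.

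The only genuinely delicate point is \textbf{(3)}: one must be careful that the image partition $\lambda_n^{-1}(\pi)$ has mesh strictly greater than $\delta$ (not merely $\geq\delta$), which forces one to start from a partition of mesh $>\delta+\sigma$ for some small $\sigma>0$ and to absorb the loss into the $\eta$. Beyond that, every step is a routine rearrangement of the Billingsley arguments; I would present (1) and (2) in full and cite Billingsley for the remaining two so as not to duplicate standard material.
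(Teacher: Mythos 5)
The paper itself offers no proof of this lemma: it simply cites Billingsley for properties 1, 3, 4 and Parthasarathy for property 2, so any written-out argument already goes beyond what the paper records. Your treatments of properties 1, 3 and 4 are faithful transcriptions of the standard proofs and are correct; in particular, the ``delicate point'' you flag in (3) is automatically resolved, since an admissible partition has finitely many intervals, each of length strictly greater than $\delta$, hence its mesh is already at least $\delta+\sigma$ for some $\sigma>0$.

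The genuine gap is in your argument for property 2. First, property 2 is a statement purely about $w''_x$; the near-optimal partition $\pi$ and its refinement by jump times play no role there and suggest a conflation of $w'_x$ with $w''_x$. More importantly, invoking the definition of $w''_x$ at the single midpoint $s=(t_1+t_2)/2$ only tells you that \emph{one} of $|x(s)-x(t_1)|$, $|x(t_2)-x(s)|$ is at most $w''_x(\delta)$; the other half remains uncontrolled, and no triangle inequality closes the estimate from a single intermediate point (iterating on the uncontrolled half does not terminate). The correct argument (Parthasarathy, Ch.\ VII, Lemma 6.4) quantifies over \emph{all} intermediate points: set $A:=\{s\in[t_1,t_2]\colon |x(s)-x(t_1)|\le w''_x(\delta)\}$ and $B:=\{s\in[t_1,t_2]\colon |x(t_2)-x(s)|\le w''_x(\delta)\}$, so that $A\cup B=[t_1,t_2]$. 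If $A\cap B\neq\emptyset$, any common point gives $|x(t_1)-x(t_2)|\le 2w''_x(\delta)$. Otherwise put $\tau:=\inf B$; right continuity yields $\tau\in B$, every point of $[t_1,\tau[$ lies in $A$ so $|x(\tau-)-x(t_1)|\le w''_x(\delta)$ by taking left limits, and the hypothesis that $x$ has no jump of magnitude $>\epsilon$ in $[\alpha,\beta[$ gives $|x(\tau)-x(\tau-)|\le\epsilon$ since $\tau\in[t_1,t_2]\subset[\alpha,\beta[$. Chaining the three bounds gives $|x(t_1)-x(t_2)|\le 2w''_x(\delta)+\epsilon$. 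This sup/inf step is precisely where the $\epsilon$ and the c\`adl\`ag structure enter, and it is the one part of the lemma that is not a routine partition manipulation; your sketch as written would not produce it.
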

These properties are standard and might be found in \cite{billingsley} for properties 1, 3 and 4 (see Chapter 3, Lemma 1, eq. (14.39) and (14.46)) and \cite{Parthasarathy} for property 2 (see Lemma 6.4 in Chapter VII).

\begin{proof}[Proof of Proposition \ref{prop_lip_in_Lg}]
	Fix a random variable $Y\in C_{b,lip}(\Omega_T)$. For any $n\in\N$ define the operator $T^n\colon\d0\to \d0$ as
	\[
		T^n(\omega)(t):=\left\{ \begin{array}{ll}
\omega_{\frac{kT}{n}} & \textrm{if }t\in[\frac{kT}{n},\frac{(k+1)T}{n}[,\ k=0,1,\ldots,n-1.\\
\omega_T & \textrm{if }t=T.
\end{array} \right.
	\]
	Define $Y^n:=Y\circ T^n$. Then $Y^n$ depend only on $\{\omega_{{kT}/{n}}\}_{k=0}^{n}$ thus there exists a function $\phi^n\colon \R^{(n+1)\times d}\to\R$ such that
	\[
		Y^n(\omega)=\phi^n(\omega_0,\omega_{\frac{T}{n}},\ldots,\omega_{T}).
	\]
	By the boundedness and Lipschitz continuity of $Y$ we can easily prove that also $\phi^n$ must be bounded and Lipschitz continuous (all we have to do is to consider the paths, which are constant on the intervals $[kT/n,(k+1)T/n[$). Note however that
	\begin{align*}
		\GE[|Y-Y^n|]=\GE[|Y-Y\circ T^n|]\leq L\,\GE[d(X^T, X^T\circ T^n)\wedge 2K],
	\end{align*}
	where $L>0$ and $K\cdot L$ are respectively a Lipschitz constant and bound of $Y$, $X^T$ is a canonical process, i.e. our $G$-\levy process, stopped at time T and $d$ is the Skorohod metric.
	
Now let us define the random variable $Z^n$ on $\Omega_T$ as follows
	\[
		Z^n(\omega):=\left\{ \begin{array}{ll}
 d(\omega,T^n( \omega))\wedge 2K & \textrm{if } \omega\in A_T,\\
0 & \textrm{otherwise},
\end{array} \right.
	\]
	where $A_T:=\{\omega\in \Omega_T\colon \omega \textrm{ has finite number of jumps in the interval }[0,T]\}.$	By Proposition \ref{prop_finite_activity} we know that $Z^n=d(X^T, X^T\circ T^n)\wedge 2K$ q.s. so the expectations of both random variables are equal
	\[
		\GE[d(X^T,X^T\circ T^n)\wedge 2K]=\GE[Z_n].
	\]
	Thus we can only consider paths with finite number of jumps. Fix then $\omega\in A_T$ having a finite number of jumps at time $0<r_1<\ldots<r_{m-1}< T$ and possibly a jump at $r_m:=T$. We can choose $n$ big enough such that $r_{i+1}-r_i\geq {T}/{n}$ for $i=0,\ldots, m-1$. Denote by $A_T^n$ the subset of $A_T$ containing all such $\omega$'s (i.e. with minimal distance between jumps larger or equal to ${T}/{n}$). We want to have an estimate of the Skorohod metric for $\omega\in A^n_T$. To obtain it we construct the piecewise linear function $\lambda^n$ as follows $\lambda^n(0)=0$, $\lambda^n(T)=T$, for each $k=1,\ldots,n-1$ define
	\[
		\lambda^n\left(\frac{kT}{n}\right) :=\left\{ \begin{array}{ll}
 \frac{kT}{n} & \textrm{if } r_i\notin \left] \frac{(k-1)T}{n},\frac{kT}{n}\right],\ i=1,\ldots,m ,\\
r_i & r_i\in \left] \frac{(k-1)T}{n},\frac{kT}{n}\right],\ i=1,\ldots,m.
\end{array} \right.
	\]
Moreover, let $\lambda^n$ be linear between these nods. By the construction $\|\lambda^n-Id\|_{\infty}\leq 2T/n$. Define $t_k:=\lambda^n({kT}/{n})$ for $k=0,\ldots,n$. Note that $\omega$ is continuous on $[t_k,t_{k+1}[$ and that $t_{k}\leq kT/n<t_{k+1},\ k=0,\ldots,n-1$.
	Then by definition of the Skorohod metric and property 2 in Lemma \ref{lem_modulus} we have	
	\begin{align*}
		d(\omega,T^n(\omega))\wedge 2K&=\left(\inf_{\lambda\in\Lambda}\max\{\|\lambda-Id\|_{\infty},\|T^n(\omega)-\omega\circ\lambda\|_{\infty}\}\right)\wedge2K\\
		&\leq \left(\|\lambda^n-Id\|_{\infty}+\|T^n(\omega)-\omega\circ\lambda^n\|_{\infty}\}\right)\wedge2K\\
		&\leq \left(\frac{2T}{n}+\max_{k=0,\ldots,n-1} \sup_{t\in[kT/n,(k+1)T/n[} |\omega\left(\frac{kT}{n}\right)-\omega\circ\lambda^n(t)|\right)\wedge 2K\\
		&\leq \left(\frac{2T}{n}+\max_{k=0,\ldots,n-1} \sup_{s,t\in[t_k,t_{k+1}[} |\omega(s)-\omega(t)|\right)\wedge 2K\\
		&\leq \left[\frac{2T}{n}+2w'_{\omega}\left(\frac{2T}{n}\right)\right]\wedge 2K.
	\end{align*}
	Thus we can define yet another bound $K^n$ as
    \[
        K^n(\omega):=\left\{
                       \begin{array}{ll}
                         \left(\frac{2T}{n}+2w'_{\omega}\left(\frac{2T}{n}\right)\right)\wedge 2K, & \textrm{if }\omega\in A_T^n,\\
                         2K, & \textrm{if }\omega\notin A_T^n.
                       \end{array}
                     \right.
    \]
    Then $K^n\geq Z^n$ and thus $\GE[Z^n]\leq \GE[K^n]$. We also have $K^n\downarrow 0$ on every $A^m_T$, $m$ is fixed. This follows from property 4 in Lemma \ref{lem_modulus}. Moreover we claim than $K^n$ is upper semi-continuous on every set $A_T^m$ for $m\leq n$. Firstly, note that the set $A^m_T$ is closed under the Skorohod topology. This is clear from the definition of the set: if $\{\omega^k\}_k\subset A^m_T$ then the distance between the jumps is $\geq {T}/{m}$ for each $k$. But if $\omega^k\to\omega$ then also $\omega$ must satisfy this property and hence it belong to $A^m_T\subset A^n_T$. Now note that by Lemma \ref{lem_modulus}, property 3, we have that $\omega\mapsto\left({2T}/{n}+2w'_{\omega}\left({2T}/{n}\right)\right)\wedge 2K$ is upper semi-continuous as a minimum of two upper semi-continuous functions and thus
        \begin{align*}
            \limsup_{k\to\infty} K^n(\omega^k)&= \limsup_{k\to\infty}  \left(\frac{2T}{n}+2w'_{\omega^k}\left(\frac{2T}{n}\right)\right)\wedge 2K\\&\leq \left(\frac{2T}{n}+2w'_{\omega}\left(\frac{2T}{n}\right)\right)\wedge 2K= K^n(\omega).
        \end{align*}
Thus $K^n$ is upper semi-continuous on each closed set $A^m_T,\ m\leq n$. 

We also claim that the sets $A_T^m$ are 'big' in the sense, that the capacity of the complement is decreasing to $0$. We prove it similarly to Proposition \ref{prop_finite_activity}. Note that
\[
	(A_T^m)^c=\{\omega\in\Omega_T\colon  \exists\, t,s\leq T,\ |t-s|<\frac{T}{m} \textrm{ and }\Delta \omega_t\neq 0,\ \Delta \omega_s\neq 0\}.
\]
For any $\theta \in \a^{\u}_{0,T}$ define the set
\[
	(A_T^{m,\theta})^c=\{\omega\in\Omega_T\colon  \exists\, t,s\leq T,\ |t-s|<\frac{T}{m} \textrm{ and }\Delta B^{0,\theta}_t(\omega)\neq 0,\ \Delta B^{0,\theta}_s(\omega)\neq 0\}.
\]
Then we have then by the representation of $c$, the fact that $\P^{\theta}$ is the law of $B.^{0,\theta}$ (which have jumps at times when Poisson process $M$ has a jump) and the properties of the Poisson process that
\begin{align*}
	c\left[(A^m_T)^c\right]&=\sup_{\theta\in\a^{\u}_{0,T}} \P^{\theta}\left[(A^m_T)^c\right]=\sup_{\theta\in\a^{\u}_{0,T}} \P_0\left[(A^{m,\theta}_T)^c\right]\\
	&\leq \P_0( \exists\, t,s\leq T,\ |t-s|<\frac{T}{m} \textrm{ and }\Delta M_t=\Delta M_s=1)=:\P_0(B^m_T).
\end{align*}
Since $M$ is a Poisson process, we get that $\P_0(\bigcap_{m=1}^{\infty} B^m_T)=0$. However, $B^m_T\supset B^{m+1}_T$, therefore by the continuity of the probability we get that $\P_0(B^m_T)\downarrow 0$ and consequently $c\left[(A^m_T)^c\right]\downarrow 0$.

Note that we will prove the assertion of our proposition if we use the following lemma (proof below).
\
\begin{lem}\label{lem_convergence_usc}
    Let $\{X_n\}_n$ be a sequence of non-negative uniformly bounded random variables on $\Omega_T$ such that there exists a sequence of closed sets $(F_m)_m$ having the following properties
    \begin{enumerate}
    \item $c(F_m^c)\to 0$ as $m\to\infty$.
    \item $X_n\downarrow 0$ on every $F_m$.
    \item  $X_n$ is upper semi-continuous on every $F_m,\ m\leq n$. 
\end{enumerate}     
Then $\GE[X_n]\to 0$.
\end{lem}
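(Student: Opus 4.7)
My approach would be to combine three ingredients: a truncation based on the capacity decay on $F_m^c$, a tightness step reducing integration to a compact subset of $\Omega_T$, and a Dini-type theorem applied to the decreasing upper semi-continuous sequence on the resulting compactum. Throughout, let $M>0$ denote a uniform bound for the family $(X_n)$ and fix $\varepsilon>0$.

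First I would truncate using the hypothesis $c(F_m^c)\to 0$: pick $m$ so that $M\,c(F_m^c)<\varepsilon/3$. Using the representation $\GE[\cdot]=\sup_{\theta\in\a^{\u}_{0,T}}\E^{\P^{\theta}}[\cdot]$ and the crude bound $X_n\mathds{1}_{F_m^c}\leq M\mathds{1}_{F_m^c}$, this immediately gives
\[
\GE[X_n]\;\leq\;\sup_{\theta}\E^{\P^{\theta}}\bigl[X_n\mathds{1}_{F_m}\bigr]+M\,c(F_m^c).
\]
Next I would invoke tightness to reduce from $F_m$ to a compact subset. The family $\mathfrak{P}=\{\P^{\theta}\colon\theta\in\a^{\u}_{0,T}\}$ constructed in Section~\ref{sec_representation} (cf.\ \cite{Ren}) is relatively compact in the weak topology, and since $\Omega_T$ with the Skorohod topology is a Polish space, Prokhorov's theorem turns this into tightness. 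I can thus pick a compact $K\subset\Omega_T$ with $\sup_{\theta}\P^{\theta}(K^c)<\varepsilon/(3M)$. Then $F_m\cap K$ is compact as a closed subset of the compact $K$.

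On $F_m\cap K$, for every $n\geq m$ assumption 3 gives that $X_n$ is upper semi-continuous and assumption 2 gives that $X_n\downarrow 0$ pointwise. The upper semi-continuous version of Dini's theorem (the super-level sets $\{X_n\geq\eta\}\cap F_m\cap K$ are closed in the compactum and decrease to $\emptyset$ for every $\eta>0$, so the finite intersection property forces them to be eventually empty) yields $s_n:=\sup_{F_m\cap K}X_n\downarrow 0$. Splitting $\mathds{1}_{F_m}=\mathds{1}_{F_m\cap K}+\mathds{1}_{F_m\cap K^c}$ and assembling the three estimates, for $n\geq m$ large enough
\[
\GE[X_n]\;\leq\; s_n + M\sup_{\theta}\P^{\theta}(K^c)+M\,c(F_m^c)\;<\;\varepsilon,
\]
which is the desired conclusion.

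The main obstacle is the tightness step: one must verify that the relative compactness of $\mathfrak{P}$ inherited from the representation theorem really delivers tightness in $\Omega_T$ for the Skorohod topology. This rests on the Polishness of $\d0$ together with Prokhorov's theorem; both are classical, but need to be invoked carefully for the topology at hand. Once tightness is secured, the remainder is a routine Denis--Hu--Peng type assembly.
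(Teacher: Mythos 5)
Your proof is correct, but it follows a genuinely different route from the paper's. The paper keeps the whole argument at the level of the capacity: it writes $\E^{\P^{\theta}}[X_n]=\int_0^M\P^{\theta}(X_n\geq t)\,dt$ (layer-cake), splits $\{X_n\geq t\}$ along $F_m$ and $F_m^c$, observes that by upper semi-continuity the super-level sets $\{X_n|_{F_m}\geq t\}$ are closed in the Skorohod topology and decrease to $\emptyset$, and then invokes Lemma 7 of Denis--Hu--Peng (continuity from above of $c$ on closed sets for a weakly compact family) plus monotone convergence in $t$. You instead make the underlying tightness explicit: you extract a compact $K$ via Prohorov (legitimately --- the paper itself uses Ren's Lemma 3.4 and this exact Prohorov step in the proof of Theorem \ref{L_G_equal_L_c}), and then apply the upper semi-continuous version of Dini's theorem on the compactum $F_m\cap K$ to get the uniform bound $s_n=\sup_{F_m\cap K}X_n\downarrow 0$, which converts directly into a bound on $\sup_{\theta}\E^{\P^{\theta}}[X_n\I_{F_m}]$. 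The two arguments rest on the same structural fact about $\mathfrak{P}$ (relative weak compactness/tightness), but your version localizes it in one visible Prohorov step and replaces the capacity lemma by an elementary finite-intersection argument, at the cost of one extra $\varepsilon/3$ split; the paper's version is slightly shorter because the compactness is hidden inside the cited capacity lemma. You also correctly handle the only delicate quantifier issue, namely that upper semi-continuity on $F_m$ is only assumed for $n\geq m$, so $m$ must be fixed before letting $n\to\infty$.
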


Applying this lemma to our sequence $\{K^n\}_n$ together with the closed sets $(A_T^m)_m$ we get that
\[
    \GE[|Y^n-Y|]\leq L\GE[ d(X^T,X^T\circ T^n)\wedge 2K]\leq L\GE[K^n]\to 0.\qedhere
\] 
\end{proof}

\begin{proof}[Proof of Lemma \ref{lem_convergence_usc}]
    Fix $\epsilon>0$. Let $M$ be the bound of all $X_n$. By the representation of the sublinear expectation we have
\begin{align*}
    \GE[X_n]&=\sup_{\theta\in\a^{\u}_{0,T}}\E^{\P^{\theta}}\left[X_n\right]=\sup_{\theta\in\a^{\u}_{0,T}}\int_0^M \P^{\theta}( X_n\geq t)dt \\&=\sup_{\theta\in\a^{\u}_{0,T}}\int_0^M \P^{\theta}[ (\{X_n\geq t\}\cap F_m)\cup (\{X_n\geq t\}\cap F_m^c)]dt\\
&\leq\sup_{\theta\in\a^{\u}_{0,T}}\int_0^M \P^{\theta}( \{ X_n|_{F_m}\geq t\}\cup F_m^c)dt
\leq\sup_{\theta\in\a^{\u}_{0,T}}\int_0^M \left[c(  X_n|_{F_m}\geq t)+c(F_m^c)\right]dt\\
&\leq\int_0^M c(  X_n|_{F_m}\geq t)dt+Mc(F_m^c).
\end{align*}
By the first property of sets $F_m$ we can choose $m$ big enough so that $c(F_m^c)\leq \frac{\epsilon}{2M}$.  Choose $n\geq m$. By the upper semi-continuity of $X_n$ on $F_m$ we get that each $\{X_n|_{F_m}\geq t\}$ is closed in the subspace topology on $F_m$. But $F_m$ is also a closed set in the Skorohod topology, thus $\{X_n|_{F_m}\geq t\}$ is also closed in it. Moreover, due to monotone convergence on $F_m$ we have that $\{X_n|_{F_m}\geq t\}\downarrow \emptyset$ as $n\uparrow \infty$. Thus by Lemma 7 in \cite{Denis_function_spaces} we get that $c(X_n|_{F_m}\leq t)\downarrow0$ as $n\uparrow \infty$ and we get the assertion of the lemma by applying monotone convergence theorem for the Lebesgue integral and choosing $n\geq m$ big enough, so that the integral is less then $\frac{\epsilon}{2}$. Thus
\[
	0\leq \GE[X_n]\leq \epsilon\quad \textrm{for } n \textrm{ big enough}.\qedhere
\]
\end{proof}
Now we are able to prove the main theorem using the reasoning by Denis \emph{et al.} as in Theorem 52 in \cite{Denis_function_spaces}, which is based on the Stone-Weierstrass theorem.
\begin{tw}\label{L_G_equal_L_c}
	The space $C_{b,lip}(\Omega_T)$ is dense in $C_b(\Omega_T)$ under the norm $\GE[|.|]$. Thus $L^1_G(\Omega_T)=\L^1_c(\Omega_T)$.
\end{tw}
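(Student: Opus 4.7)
The plan is to mirror the Stone--Weierstrass argument of Denis--Hu--Peng (Theorem~52 in \cite{Denis_function_spaces}), adapted to the Skorohod space $\Omega_T=\d0$, and then combine it with Proposition~\ref{prop_lip_in_Lg} and Ren's inclusion $L^1_G(\Omega_T)\subset \L^1_c(\Omega_T)$ from \cite{Ren}. The final equality $L^1_G(\Omega_T)=\L^1_c(\Omega_T)$ is then obtained by sandwiching $\L^1_{c,lip}\subset L^1_G\subset \L^1_c$ and showing that the outer spaces coincide.

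The first step is tightness of the family $\mathfrak{P}=\{\P^{\theta}\colon \theta\in\a^{\u}_{0,T}\}$ on $\Omega_T$. The uniform bound \eqref{eq_property_of_u} on $\u$ together with Burkholder--Davis--Gundy for the continuous martingale part of $B^{0,\theta}$ and a direct pathwise estimate for the finite-variation jump part provides uniform control of $\E^{\P}[\sup_{t\leq T}|B^{0,\theta}_t|]$ and of the expected \cadlag modulus $\E^{\P}[w'_{B^{0,\theta}}(\delta)]$ as $\delta\downarrow 0$; the finite-activity hypothesis helps here, since jumps of $B^{0,\theta}$ only occur at jump times of the Poisson process $M$ and the jump sizes are $\mu$-integrable uniformly in $\theta$. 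The classical compactness criterion in $\d0$ then supplies, for every $\varepsilon>0$, a Skorohod-compact set $K_{\varepsilon}\subset \Omega_T$ with $c(K_{\varepsilon}^{c})=\sup_{\theta}\P^{\theta}(K_{\varepsilon}^{c})\leq \varepsilon$.

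The second step is the Stone--Weierstrass approximation. Fix $Y\in C_b(\Omega_T)$ with $\|Y\|_\infty\leq M$ and $\varepsilon>0$, and pick $K_\varepsilon$ as above with $c(K_\varepsilon^c)\leq \varepsilon/(4M)$. The Skorohod-distance functions $\{\omega\mapsto d_{Sk}(\omega,\omega_0)\wedge 1\colon \omega_0\in\Omega_T\}$ all lie in $C_{b,lip}(\Omega_T)$ and separate points, so the restriction algebra $\{f|_{K_\varepsilon}\colon f\in C_{b,lip}(\Omega_T)\}$ is a unital point-separating subalgebra of $C(K_\varepsilon)$. Stone--Weierstrass yields some $Z\in C_{b,lip}(\Omega_T)$ with $\sup_{\omega\in K_\varepsilon}|Y(\omega)-Z(\omega)|\leq \varepsilon/2$, and a truncation plus McShane--Whitney extension lets us also arrange $\|Z\|_\infty\leq M$. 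Then
\[
\GE[|Y-Z|]\leq \sup_{\theta}\E^{\P^{\theta}}\!\left[|Y-Z|\I_{K_\varepsilon}\right]+2M\,c(K_\varepsilon^c)\leq \frac{\varepsilon}{2}+\frac{\varepsilon}{2}=\varepsilon,
\]
so $C_{b,lip}(\Omega_T)$ is $\GE[|\cdot|]$-dense in $C_b(\Omega_T)$; taking $\|\cdot\|_1$-closures gives $\L^1_{c,lip}(\Omega_T)=\L^1_c(\Omega_T)$. Combined with $\L^1_{c,lip}(\Omega_T)\subset L^1_G(\Omega_T)$ from Proposition~\ref{prop_lip_in_Lg} and Ren's $L^1_G(\Omega_T)\subset\L^1_c(\Omega_T)$, the chain collapses to $L^1_G(\Omega_T)=\L^1_c(\Omega_T)$.

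The main obstacle is the tightness statement: the modulus-of-continuity estimate entering the compactness criterion in $\d0$ has to be genuinely uniform over the whole infinite-dimensional control class $\a^{\u}_{0,T}$. This is precisely where the finite-activity assumption, which reduces the Skorohod modulus of $B^{0,\theta}$ essentially to a Poissonian count of jumps, and the uniform bound \eqref{eq_property_of_u} on $\u$ are both used in an essential way. Once tightness is in hand, both the Stone--Weierstrass step and the sandwich with Ren's inclusion are routine.
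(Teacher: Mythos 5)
Your proposal is correct and follows essentially the same route as the paper: tightness of $\{\P^{\theta}\colon\theta\in\a^{\u}_{0,T}\}$ plus Prohorov's theorem yields compact sets of large capacity, Stone--Weierstrass on each compact followed by a Lipschitz (Tietze/McShane--Whitney) extension produces the approximant with the same bound, and the equality $L^1_G(\Omega_T)=\L^1_c(\Omega_T)$ follows by sandwiching with Proposition~\ref{prop_lip_in_Lg} and Ren's inclusion $L^1_G(\Omega_T)\subset\L^1_c(\Omega_T)$. The only difference is that you sketch a direct derivation of tightness via moment and modulus estimates, whereas the paper simply cites Lemma~3.4 of \cite{Ren} for this ingredient.
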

\begin{proof}
	Fix $Y\in C_b(\Omega_T)$. We will prove that there exists a sequence $Y^n$ in 	$C_{b,lip}(\Omega_T)$ converging to $Y$ in $\GE[|.|]$-norm.
	
	Firstly, Ren proved that the family $\{\P^{\theta}\colon \theta \in \a^{\u}_{0,T}\}$ used to represent the sublinear expectation $\GE[.]$ is tight (see Lemma 3.4 in \cite{Ren}). Therefore by Prohorov's theorem (see e.g. Theorem 6 in \cite{Denis_function_spaces}) for each $n\in\N$ there exists a set $K_n$ which is compact in the Skorohod topology and $c(K_n^c)<1/n$.
	
	Note also that $C_{b,lip}(\Omega_T)$ is an subalgebra in $C_b(\Omega_T)$, which separates the points (the last claim is an easy consequence of the Tietze's extension theorem for Lipschitz functions, see Theorem 1.5.6. in \cite{Lipschitz}). Thus by the Stone-Weierstrass theorem for each compact set $K_n$ there exists a random random variable $Z^n$ on $K_n$, which is bounded and Lipschitz and 
	\[
		\sup_{\omega\in K_n} |Y(\omega)-Z^n(\omega)|<\frac{1}{n}\quad \textrm{and}\quad \sup_{\omega\in K_n} |Z^n(\omega)|\leq \sup_{\omega\in K_n} |Y(\omega)|.
	\]
	Once again using the Tietze's extension theorem, we may extend $Z^n$ to the whole $\Omega_T$ preserving the Lipschitz constant and the bound and we will denote this extension by $Y^n$. Note that $Y^n\in C_{b,lip}(\Omega_T)$ and that
	\[
		\sup_{\omega\in \Omega_T} |Y^n(\omega)|=\sup_{\omega\in K_n} |Z^n(\omega)|\leq \sup_{\omega\in K_n} |Y(\omega)|\leq \sup_{\omega\in \Omega_T} |Y(\omega)|=:M.
	\]
	Hence
	\begin{align*}
		\GE[|Y^n-Y|]&\leq \GE[|Y^n-Y|\I_{K_n}]+\GE[|Y^n-Y|\I_{K_n^c}]\leq 2M c(K_n^c)+\frac{1}{n} c(K_n)\\&\leq \frac{1}{n}(2M+1)\to 0.
	\end{align*}
	Therefore we proved that  $C_{b,lip}(\Omega_T)$ is dense in $C_b(\Omega_T)$. Thus the closure of  $C_{b,lip}(\Omega_T)$ under the norm $\GE[|.|]$ is exactly $\L^1_c(\Omega_T)$. However, by Proposition \ref{prop_lip_in_Lg} we know that $C_{b,lip}(\Omega_T)\subset L^1_G(\Omega_T)$ and by Remark 4 in \cite{Ren} we know that $L^1_G(\Omega_T)\subset \L^1_c(\Omega_T)$. Thus $L^1_G(\Omega_T)=\L^1_c(\Omega_T)$.
\end{proof}

The theorem above allows us  to use the characterization of the random variables in $\L^p_c(\Omega)$ in terms of their continuity and thickness of their tails. Namely, introduce the following definition.
\begin{defin}
	We will say that the random variable $Y\in L^0(\Omega)$ is quasi-continuous, if for all $\epsilon>0$ there exists an open set $O$ such that $c(O)<\epsilon$ and $Y|_{O^c}$ is continuous. For convenience, we will often use the abbreviation \emph{q.c}.
\end{defin}
It is well known that the following characterization of $\L^p_c(\Omega)$ (thus also $L^p_G(\Omega)$) holds (see Theorem 25 in \cite{Denis_function_spaces}).

\begin{prop}\label{prop_qL_are_in_Lip}
	For each $p\geq 1$ one has
\begin{align*}
	\L^p_c(\Omega)=L^p_G(\Omega)=\{Y\in \L^p\colon \lim_{n\to\infty}\GE[|Y|^p\I_{\{|Y|>n\}}]=0, \ Y \textrm{ has a q.c. version}\}.
\end{align*}
\end{prop}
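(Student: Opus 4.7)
The plan is to obtain Proposition~\ref{prop_qL_are_in_Lip} by combining Theorem~\ref{L_G_equal_L_c} with the standard characterization of the closure of $C_b$ under a capacity-based sublinear expectation given by Theorem~25 in \cite{Denis_function_spaces}. The first equality $L^p_G(\Omega)=\L^p_c(\Omega)$ follows by upgrading Theorem~\ref{L_G_equal_L_c} from $p=1$ and $\Omega_T$ to arbitrary $p\geq 1$ and to $\Omega$: the Stone--Weierstrass and Prohorov arguments there apply to truncations $Y^{(M)}:=(-M)\vee Y\wedge M$ equally well under the $\|\cdot\|_p$-norm because of uniform boundedness, and one passes from $M\to\infty$ using the tail condition and from $\Omega_T$ to $\Omega$ by cylinder approximation.

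For the nontrivial equality I would prove the two inclusions separately. For $L^p_G(\Omega)\subseteq\{Y\colon \GE[|Y|^p\I_{\{|Y|>n\}}]\to 0,\ Y\text{ has a q.c.\ version}\}$, pick $Y\in L^p_G(\Omega)$ and, using the preceding stage, choose a sequence $(Y_k)\subset C_{b,lip}(\Omega)$ with $\|Y_k-Y\|_p\to 0$. The tail condition follows from the routine split
\[
\GE[|Y|^p\I_{\{|Y|>n\}}]\leq 2^{p-1}\GE[|Y-Y_k|^p]+2^{p-1}\GE[|Y_k|^p\I_{\{|Y|>n\}}],
\]
by choosing $k$ then $n$ large via boundedness of $Y_k$. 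For the quasi-continuous version I would extract a subsequence with $\GE[|Y_{k+1}-Y_k|^p]\leq 2^{-k(p+1)}$; since each $|Y_{k+1}-Y_k|$ is genuinely continuous on $\Omega$ with the Skorohod topology, the strict superlevel set $\{|Y_{k+1}-Y_k|>2^{-k}\}$ is open, and the Chebyshev bound $c(\{|Y_{k+1}-Y_k|>2^{-k}\})\leq 2^{kp}\GE[|Y_{k+1}-Y_k|^p]\leq 2^{-k}$ together with subadditivity gives $c(O_m)\leq 2^{1-m}$ for $O_m:=\bigcup_{k\geq m}\{|Y_{k+1}-Y_k|>2^{-k}\}$. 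On $O_m^c$ the partial sums $\sum(Y_{k+1}-Y_k)$ converge uniformly, so the limit $\tilde Y$ is continuous there and equals $Y$ quasi-surely.

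For the reverse inclusion, let $Y$ be q.c.\ with $\GE[|Y|^p\I_{\{|Y|>n\}}]\to 0$ and truncate $Y_n:=(-n)\vee Y\wedge n$. The tail condition gives $\|Y_n-Y\|_p\to 0$, so it is enough to show each bounded q.c.\ random variable belongs to $\L^p_c(\Omega)$. For $\epsilon>0$ pick an open $O$ with $c(O)<\epsilon$ and $Y_n|_{O^c}$ continuous, then Tietze's extension theorem produces $Z\in C_b(\Omega)$ with $\|Z\|_\infty\leq n$ and $Z=Y_n$ on $O^c$, hence $\GE[|Y_n-Z|^p]\leq(2n)^p c(O)<(2n)^p\epsilon$, which can be made arbitrarily small.

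The hard part will be the quasi-continuity construction in the $(\subseteq)$ direction: the Chebyshev--Borel--Cantelli machinery requires the approximants to be continuous in the Skorohod topology so that the exceptional sets are open and so that uniform convergence outside them yields a genuinely continuous restriction. This is precisely why it is essential to work with $C_{b,lip}(\Omega)$ rather than the native space $\lip$, whose elements are not Skorohod-continuous. It is exactly Proposition~\ref{prop_lip_in_Lg} together with the upgraded Theorem~\ref{L_G_equal_L_c} that makes this substitution legitimate, since they guarantee that any element of $L^p_G(\Omega)$ can be $\|\cdot\|_p$-approximated by continuous Lipschitz functions on $\Omega$.
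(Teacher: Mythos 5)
Your proposal is correct and follows exactly the route the paper intends: the paper does not prove this proposition but simply combines Theorem~\ref{L_G_equal_L_c} with the characterization in Theorem~25 of \cite{Denis_function_spaces}, and your argument (the $p$-upgrade via uniform boundedness, the Chebyshev--Borel--Cantelli extraction of a quasi-continuous version from a fast $\|\cdot\|_p$-convergent sequence of Skorohod-continuous approximants, and the Tietze-extension step for the reverse inclusion) is precisely the standard proof of that cited result transplanted to the present capacity $c=\sup_{\theta}\P^{\theta}$. Your closing remark correctly identifies why Proposition~\ref{prop_lip_in_Lg} and Theorem~\ref{L_G_equal_L_c} are the essential inputs: they are what license replacing the non-Skorohod-continuous elements of $\lip$ by genuinely continuous approximants.
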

Thanks to this proposition we will be sure that our integral  belongs to $L^2_G(\Omega)$ space.

\section{Definition of the \ito integral}\label{sec_integral}
By the definition of the $G$-\levy process there is a L\'evy-It\^o-type decomposition on the continuous part (i.e. generalized $G$-Brownian motion) and pure-jump process. As it is widely known, there is a good definition of the \ito integral w.r.t. $G$-Brownian motion, so we will deal only with the pure-jump part.

We introduce the following random measure: for any $0\leq t<s$ and $A\in \B(\r0)$
\[
	X(]t,s],A):=\sum_{t<u\leq s}\I_{A}(\Delta X_u),\ q.s.
\]

\begin{rem} Note that this random measure is well-defined q.s. thanks to the finite activity property. Moreover, it is really a random measure, i.e. is countably additive. This is not true if one would like to compensate it by factor $\GE[X(]t,s],A)]$, as a a function $A\to \GE[X(]t,s],A)]$ is usually not additive, as one can easily check. Though it is not such a big obstacle for defining the \ito integral, it shows a big difference to the ordinary Poisson random measures, which can  always be compensated (compare \cite{applebaum}). Moreover, it shows that for disjoints sets $A_1$ and $A_2$ the random variables $X(]t,s],A_1)$ and $X(]t,s],A_2)$ are NOT independent under $\GE[.]$\footnote{This is not surprising, as there is already a good characterization of the random variables mutually independent of each other under sublinear expectations, see \cite{Hu_indep}.}.
\end{rem}

Let us now introduce the set of simple integrands.
\begin{defin}
	Let $\H^S_G([0,T]\times\r0)$ be a collection of all processes defined on $[0,T]\times \r0\times\Omega$ of the form
	\begin{equation}\label{eq_K_representation}
			K(u,z)(\omega)=\sum_{k=1}^{n-1}\,\sum_{l=1}^m\, F_{k,l}(\omega)\,\I_{]t_{k},t_{k+1}]}(u)\,\psi_l(z),\ n,m\in\N,
	\end{equation}
	where $0\leq t_1<\ldots<t_n\leq T$ is the partition of $[0,T]$, $\{\psi_l\}_{l=1}^m\subset C_{b,lip}(\R^d)$ are functions with disjoint supports s.t. $\psi_l(0)=0$ and $F_{k,l}=\phi_{k,l}(X_{t_1},\ldots, X_{t_k}-X_{t_{k-1}})$, \linebreak$\phi_{k,l}\in C_{b,lip}(\R^{d\times k})$.
	We introduce two norms on this space
	\[
		\|K\|_{\H_G^p([0,T]\times\r0)}:=\GE\left[\int_0^T\,\sup_{v\in\v}\,\int_{\r0}|K(u,z)|^pv(dz)du\right]^{1/p},\quad p=1,2.
	\]
	Note that the norms are well defined on $\H^S_G([0,T]\times\r0)$.
\end{defin}
\begin{defin}\label{def_ito_int}
	Let $0\leq s<t\leq T$. The \ito integral of $K\in \H^S_G([0,T]\times\r0)$ w.r.t.\ jump measure $X$ is a random variable defined as
	\[
		\int_s^t\int_{\r0}\, K(u,z) X(du,dz):=\sum_{s<u\leq t}\, K(u,\Delta X_u),\ q.s.
	\]
	If $s=0,\ t=T$ we will denote also the integral as an operator $I$.
\end{defin}
\begin{tw}\label{tw_integral_continuous}
	\ito integral $I$ is a continuous linear operator from $\H^S_G([0,T]\times\r0)$ equipped with the norm $\|.\|_{\H^p_G([0,T]\times\r0)}$ to $\L^p$ for $p=1,2$.
\end{tw}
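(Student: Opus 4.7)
Linearity of $I$ is immediate from Definition \ref{def_ito_int}, so the task is the norm estimate. My plan is to pass through the representation $\GE[\cdot]=\sup_{\theta\in\a^{\u}_{0,\infty}}\E^{\P^{\theta}}[\cdot]$ supplied by Corollary \ref{col_representation_L_G} and reduce everything to classical computations on the probability space $(\Omega,\F,\P)$ carrying $W$ and $N$. Under $\P^{\theta}$ the canonical process has the law of $B.^{0,\theta}$; its jumps occur precisely when $N$ jumps, and at such a time $u$ the jump size is $\theta^d(u,\Delta N_u)$. Since $\psi_l(0)=0$ for every $l$ we have $K(u,0)=0$, so the finite-activity jump sum rewrites as the pathwise Poisson integral
\[
I(K)\circ B.^{0,\theta}=\int_0^T\!\!\int_{\r0}H(u,z)\,N(du,dz),\qquad H(u,z):=K(u,\theta^d(u,z))\circ B.^{0,\theta}.
\]
Because $K$ is a step function in $u$ with coefficients that are $\F_{t_k}$-measurable functionals of $B.^{0,\theta}$, and $\theta^d$ is $\fil$-predictable, $H$ is $\fil$-predictable, which is what the classical estimates require.

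For $p=1$ I would apply the triangle inequality and the compensation formula for non-negative Poisson integrals to obtain $\E^{\P^{\theta}}[|I(K)|]\leq\E^{\P}[\int_0^T\!\int_{\r0}|H|\mu(dz)du]$. Assumption \ref{ass1} then supplies, for each fixed $(\omega',u)$, the pushforward $v_{u,\omega'}:=\theta^d(u,\cdot)(\omega')\circ\mu\in\v$, which allows a change of variables in the inner integral. The disjoint-support hypothesis on the $\psi_l$ makes $|K|$ the clean sum $\sum_l|F_{k,l}||\psi_l|$, so pointwise
\[
\int_{\r0}|H|\mu(dz)=\int_{\r0}|K(u,y)\circ B.^{0,\theta}|\,v_{u,\omega'}(dy)\leq\sup_{v\in\v}\int_{\r0}|K(u,y)\circ B.^{0,\theta}|\,v(dy).
\]
Integrating, pushing the right-hand side back through $\P^{\theta}=\P\circ(B.^{0,\theta})^{-1}$ and taking $\sup_\theta$ yields $\GE[|I(K)|]\leq\|K\|_{\H_G^1([0,T]\times\r0)}$.

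For $p=2$ I would decompose $N=\tilde N+\mu(dz)du$ into its compensated martingale and drift, apply $(a+b)^2\leq 2a^2+2b^2$, use the Itô isometry $\E^{\P}[(\int H\,d\tilde N)^2]=\E^{\P}[\int\!\int H^2\mu(dz)du]$, and control the drift by two successive Cauchy--Schwarz inequalities together with $\mu(\r0)=1$: the latter gives $\E^{\P}[(\int\!\int H\mu(dz)du)^2]\leq T\,\E^{\P}[\int\!\int H^2\mu(dz)du]$. Both pieces are thereby bounded by the same quantity $\E^{\P}[\int_0^T\!\int_{\r0}H^2\mu(dz)du]$, and the pointwise estimate $\int H^2\mu(dz)\leq\sup_{v\in\v}\int|K|^2 v(dy)$ follows exactly as in the $p=1$ argument. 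After taking $\sup_\theta$ this produces $\GE[|I(K)|^2]\leq 2(1+T)\|K\|_{\H_G^2([0,T]\times\r0)}^2$.

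The main obstacle is the $p=2$ squaring step: one has to ensure that no cross terms spoil the pointwise pushforward bound once the integrand is squared. This is exactly what the disjoint-support structure of the $\psi_l$ in the definition of $\H_G^S([0,T]\times\r0)$ provides, since then $|\sum_l a_l\psi_l(z)|^2=\sum_l a_l^2\psi_l(z)^2$ pointwise and the squared integrand inherits the same linear structure in the $F_{k,l}$. With that ingredient available, the rest is standard Poisson integral calculus combined with the representation in Corollary \ref{col_representation_L_G}.
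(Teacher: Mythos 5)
Your proposal is correct and follows essentially the same route as the paper: representation via Corollary \ref{col_representation_L_G}, rewriting the jump sum as a Poisson integral of the predictable integrand $K(u,\theta^d(u,z))$, the decomposition $N=\tilde N+\mu(dz)du$ with It\^o isometry and Cauchy--Schwarz giving the constant $2(T+1)$, and the pushforward measure $\pi_u=\mu\circ\theta^d(u,\cdot)^{-1}\in\v$ together with the disjoint supports of the $\psi_l$ to reach the bound $\sup_{v\in\v}\int K^2\,v(dz)$. You correctly identified the role of the disjoint-support hypothesis in killing the cross terms at the squaring step, which is exactly the point the paper exploits.
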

\begin{proof}
	We will prove the theorem only for $p=2$ as the case $p=1$ follows the same argument. We will utilize Corollary \ref{col_representation_L_G}. Let $K$ has representation as in eq. (\ref{eq_K_representation}), i.e.
		\[
		K(u,z)(\omega)=\sum_{k=1}^{n-1}\,\sum_{l=1}^m\, F_{k,l}(\omega)\,\I_{]t_{k},t_{k+1}]}(u)\,\psi_l(z),
	\]
	where $0\leq t_1\leq\ldots\leq t_n\leq T$, $\{\psi_l\}_{l=1}^m\subset C_{b,lip}(\R^d)$ are functions with disjoint supports s.t. $\psi_l(0)=0$  and  $F_{k,l}=\phi_{k,l}(X_{t_1},\ldots, X_{t_k}-X_{t_{k-1}})$, $\phi_{k,l}\in C_{b,lip}(\R^{d\times k})$. By the corollary and the definition of the \ito integral we have
	\begin{align}\label{eq_ito_isometry1}
		\GE&\left[\left(\int_0^T\int_{\r0}\, K(u,z) X(du,dz)\right)^2\right]= \sup_{\theta\in\a^{\u}_{0,T}}\,\E^{\P^{\theta}}\left[\left(\sum_{0<u\leq T} K(u,\Delta X_u)\right)^2\right]\notag\\
		=& \sup_{\theta\in\a^{\u}_{0,T}}\,\E^{\P^{\theta}}\left[\left(\sum_{0<u\leq T} \sum_{k=1}^{n-1}\,\sum_{l=1}^m\, \phi_{k,l}(X_{t_1},\ldots, X_{t_k}-X_{t_{k-1}})\,\I_{]t_{k},t_{k+1}]}(u)\,\psi_l(\Delta X_u)\right)^2\right]\notag\displaybreak[1]\\
		=& \sup_{\theta\in\a^{\u}_{0,T}}\,\E^{\P_0}\left[\left(\sum_{0<u\leq T} \sum_{k=1}^{n-1}\,\sum_{l=1}^m\, \phi_{k,l}(B^{0,\theta}_{t_1},\ldots, B_{t_k}^{t_{k-1},\theta})\,\I_{]t_{k},t_{k+1}]}(u)\,\psi_l(\Delta B^{0,\theta}_u)\right)^2\right]\notag\\
		=& \sup_{\theta\in\a^{\u}_{0,T}}\,\E^{\P_0}\left[\left(\sum_{0<u\leq T} \sum_{k=1}^{n-1}\,\sum_{l=1}^m\, F^{\theta}_{k,l}\,\I_{]t_{k},t_{k+1}]}(u)\,\psi_l(\theta^d(u,\Delta N_u))\right)^2\right],
		\end{align}
		where $F^{\theta}_{k,l}:= \phi_{k,l}(B^{0,\theta}_{t_1},\ldots, B_{t_k}^{t_{k-1},\theta})$.
	Define a predictable process $K^{\theta}(u,z)$ as
	\begin{align*}
		K^{\theta}(u,z)&:=\sum_{k=1}^{n-1}\,\sum_{l=1}^m\, F^{\theta}_{k,l}\,\I_{]t_{k},t_{k+1}]}(u)\,\psi_l(\theta^d(u,z)).
	\end{align*}
	Then we can write eq. (\ref{eq_ito_isometry1}) as
	\[
		\GE\left[\left(\int_0^T\int_{\r0}\, K(u,z) X(du,dz)\right)^2\right]= \sup_{\theta\in\a^{\u}_{0,T}}\,\E^{\P_0}\left[\left(\int_0^T\int_{\r0}\, K^{\theta}(u,z) N(du,dz)\right)^2\right],
	\]
	\[
		=\sup_{\theta\in\a^{\u}_{0,T}}\,\E^{\P_0}\left[\left(\int_0^T\int_{\r0}\, K^{\theta}(u,z) \tilde{N}(du,dz)+\int_0^T\int_{\r0}\, K^{\theta}(u,z)\mu(dz)du\right)^2\right],
	\]
	where $N(du,dz)$ and $\tilde{N}(du,dz)$ are respectively the Poisson random measure and the compensated Poisson measure associated with the \levy process with the \levy triplet $(0,0,\mu)$. Using standard inequalities we get hence:
	\begin{align}
		&\GE\left[\left(\int_0^T\int_{\r0}\, K(u,z) X(du,dz)\right)^2\right]\notag\\&\leq 2\sup_{\theta\in\a^{\u}_{0,T}}\,\left\{\E^{\P_0}\left[\left(\int_0^T\int_{\r0}\, K^{\theta}(u,z) \tilde{N}(du,dz)\right)^2+\left(\int_0^T\int_{\r0}\, K^{\theta}(u,z)\mu(dz)du\right)^2\right]\right\}\notag\\
		&\leq 2\sup_{\theta\in\a^{\u}_{0,T}}\left\{\int_0^T\int_{\r0}\E^{\P_0}\left[\left(K^{\theta} (u,z)\right)^2\right] \mu(dz)du+T\int_0^T\int_{\r0} \E^{\P_0}\left[\left(K^{\theta}(u,z)\right)^2\right]\mu(dz)du\right\}\displaybreak[1]\notag\\
		&=C_T \sup_{\theta\in\a^{\u}_{0,T}}\int_0^T\!\int_{\r0}\!\E^{\P_0}\left[\left(\sum_{k=1}^{n-1}\sum_{l=1}^m\,F^{\theta}_{k,l}\,\I_{]t_{k},t_{k+1}]}(u)\,\psi_l(\theta^d(u,z))\right)^2\right] \mu(dz)du,\notag
	\end{align}	
	where $C_T:=2(T+1)$. Note that the intervals $]t_k,t_{k+1}]$ are mutually disjoint, just as the supports of $\psi_l$, hence 
		\begin{align}\label{eq_ito_isometry2}
		&\GE\left[\left(\int_0^T\int_{\r0}\, K(u,z) X(du,dz)\right)^2\right]\notag\\
		&\leq C_T\sup_{\theta\in\a^{\u}_{0,T}}\sum_{k=1}^{n-1}\sum_{l=1}^m\int_0^T\int_{\r0}\E^{\P_0}\left[(F^{\theta}_{k,l})^2\,\I_{]t_{k},t_{k+1}]}(u)\psi^2_l(\theta^d(u,z))\right] \mu(dz)du\notag\\
		&=C_T\sup_{\theta\in\a^{\u}_{0,T}}\, \sum_{k=1}^{n-1}\sum_{l=1}^m\,\int_{t_k}^{t_{k+1}}\, \E^{\P_0}\left[\int_0^T (F^{\theta}_{k,l})^2\,\int_{\r0}\psi^2_l(\theta^d(u,z)) \mu(dz)\right]du.
	\end{align}	
	By the assumptions on the process $\theta^d$, we know that for a.a. $\omega$ and a.e. $u$ function $z\mapsto \theta^d(u,z)(\omega)$ is equal to $g_v$ for some $v\in\v$. Hence we can define a random measure $\pi^{\theta}_u$ as $\pi^{\theta}_u(\omega):=v$ if $\theta^d(u,.)(\omega)=g_v$. Note that $\pi^{\theta}_u \in \v$, $\P_0-a.s.$ for a.e. $u$ and for every set $A\in\B(\R^d)$ the function $(u,\omega)\mapsto\pi^{\theta}(A)$ is $\B([0,T])\otimes\F_T$-measurable (as the random field $\theta^d$ is predictable). Hence, we can transform \eqref{eq_ito_isometry2} to get
	\begin{align*}
		\GE&\left[\left(\int_0^T\int_{\r0}\, K(u,z) X(du,dz)\right)^2\right]\notag\\
		&\leq C_T\sup_{\theta\in\a^{\u}_{0,T}}\, \sum_{k=1}^{n-1}\,\sum_{l=1}^m\,\int_{t_k}^{t_{k+1}}\, \E^{\P_0}\left[\phi^2_{k,l}(B^{0,\theta}_{t_1},\ldots, B_{t_k}^{t_{k-1},\theta})\int_{\r0}\psi_l^2(z)\pi^{\theta}_u(dz)(.)\right]du\notag
		\\
		&=C_T\sup_{\theta\in\a^{\u}_{0,T}}\, \E^{\P_0}\left[\sum_{k=1}^{n-1}\,\int_{t_k}^{t_{k+1}}\,\sum_{l=1}^m\, \phi^2_{k,l}(B^{0,\theta}_{t_1},\ldots, B_{t_k}^{t_{k-1},\theta})\int_{\r0}\psi_l^2(z)\pi^{\theta}_u(dz)(.)du\right]\notag
		\\
		&\leq C_T\sup_{\theta\in\a^{\u}_{0,T}}\, \E^{\P_0}\left[\sum_{k=1}^{n-1}\,\int_{t_k}^{t_{k+1}}\sup_{v\in\v}\,\sum_{l=1}^m\, \phi^2_{k,l}(B^{0,\theta}_{t_1},\ldots, B_{t_k}^{t_{k-1},\theta})\int_{\r0}\psi_l^2(z)v(dz)du\right]\notag
		\\
		&\leq C_T\sup_{\theta\in\a^{\u}_{0,T}}\, \E^{\P_0}\left[\int_{0}^{T}\sup_{v\in\v}\int_{\r0}\sum_{k=1}^{n-1}\sum_{l=1}^m\phi^2_{k,l}(B^{0,\theta}_{t_1},\ldots, B_{t_k}^{t_{k-1},\theta})\I_{]t_k,t_{k+1}]}(u)\psi^2_l(z)\,v(dz) du\right]\notag\\
		&=C_T\,\GE\left[\,\int_{0}^{T}\sup_{v\in\v}\,\int_{\r0}\, K^2(u,z)\,v(dz) du\right].
	\end{align*}
	We stress that the integral w.r.t. the random measure $\pi^{\theta}_u$ in the second and third line is well defined and it is also $\B([0,T])\otimes\F_T$-measurable thanks to the measurability of $\pi^{\theta}_u$. Hence, all other integrals also make sense.
\end{proof}
\begin{tw}\label{tw_integral_in_LpG}
    For every $K\in \H^S_G([0,T]\times\r0)$ we have that $\int_s^t\int_{\r0}\, K(u,z) X(du,dz)$ is an element of both $L^1_G(\Omega)$ and $L^2_G(\Omega)$.
\end{tw}
\begin{proof}
    First note that $L^2_G(\Omega_T)\subset L^1_G(\Omega_T)$, therefore it is sufficient to prove that the integral belong to the smaller space. By the linearity of $L^2_G(\Omega)$ it also suffices to prove the assertion of the theorem for $K$ of the form 
    \[
        K(u,z):=\psi(z),\quad \psi\in C_{b,lip}(\R^d).
    \]
    Note that we can take $K$ deterministic, because for every $X\in L^2_G(\Omega)$ and $\xi\in Lip(\Omega)$, $X\cdot \xi\in L^2_G(\Omega)$ due to the boundedness of $\xi$. Thus it suffices to prove that 
    \[
        \sum_{s<u\leq t} \psi (\Delta X_u)=:Y\in L^2_G(\Omega).
    \]
    We will use Proposition \ref{prop_qL_are_in_Lip} noting that by Theorem \ref{tw_integral_continuous} the integral $Y$ is in $\L^2$ space. 
    
    Firstly, we will prove that $Y$ has a quasi-continuous version. Introduce a random variable $Z:=  \sum_{s\leq u\leq t} \psi (\Delta X_u)$ and a set
     \begin{multline*} 
     	A_n:=\{\omega\in\Omega\colon \omega \textrm{ has at most } n\textrm{ jumps in the interval } ]s,t[\\ \textrm{ and no jumps in both } ]s-1/n,s[ \textrm{ and } ]t,t+1/n[\,\}.
     \end{multline*} 
     Fix a sequence $(\omega^m)_m\subset A_n$ converging to $\omega$ in the Skorohod topology. By the definition of the Skorohod metric it is easy to see that if $\Delta \omega_u\neq 0$, $u\in]s,t[$, then there exists a sequence $(u_m)_m\subset ]s,t[$ converging to $u$ s.t. \begin{equation}\label{eq_conv_omega_u}
     \Delta\omega^m_{u_m}\to \Delta \omega_u. 
\end{equation} 
Conversely, if there exists a sequence $(u_m)_m\subset ]s,t[$ converging to $u\in ]s,t[$ and such that $\Delta \omega^m_{u_m}\neq 0$ for almost all $m$, then $\Delta \omega^m_{u_m}\to \Delta \omega_u$ (which might be equal to 0). By this we conclude that $\omega$ has at most $n$ jumps in the interval $]s,t[$. Similarly, we claim that $\omega$ can't have any jumps in the intervals $]s-1/n,s[$ and $]t,t+1/n[$. Thus, $A_n$ is a closed set under Skorohod topology. 
     
Moreover, by the definition of $A_n$ we have that jumps of $\omega^m$ can neither escape the interval $[s,t]$ nor enter it as $n$ goes to infinity. Thus by \eqref{eq_conv_omega_u} and the continuity of $\psi$ we get
\[
    \sum_{s\leq u\leq t}\psi(\Delta \omega_u^m)\to \sum_{s\leq u \leq t}\psi(\Delta \omega_u).
\]
Hence, $Z$ is continuous on $A_n$.

We prove now that the complement of $A_n$ has a small capacity. By the same argument as in the Proposition \ref{prop_finite_activity}, we can show that $X$ has at least $n$ jumps in the interval $]\alpha,\beta[$ implies that the Poisson process $M$ needs to have also at least $n$ jumps in the same interval and the capacity of set $A_n^c$ might be dominated in the following manner
\begin{align*}
    c(A_n^c)\leq &\P_0(M \textrm{ has at least }n\textrm{ jumps in the interval }]s,t[)\\&+\P_0(M \textrm{ has at least }1\textrm{ jump in the interval }]s-1/n,s[)\\&+\P_0(M \textrm{ has at least }1\textrm{ jump in the interval }]t,t+1/n[)\downarrow 0.
\end{align*}
Moreover, $A_n^c$ is open as the complement of a closed set. Hence, we conclude that $Z$ is quasi-continuous. 

It remains to show that $Z=Y$ q.s. This is however easy, since
\[
	c(Y\neq Z)=c(\Delta X_s\neq 0)\leq \P_0(\Delta M_s\neq 0)=0.
\]

The 'uniform integrability condition' might be proved in the similar manner. Let $K$ be a bound of $\psi$. Without the loss of generality we may assume that $K=1$. 
Note that  for any $\theta \in \a^{\u}_{0,\infty}$ we have the following inclusion
\begin{align*}
    &\left\{\left|\sum_{s<u\leq t}\psi(\theta^d(u,\Delta N_u))\right|>n\right\}\subset\left\{\sum_{s<u\leq t}\left|\psi(\theta^d(u,\Delta N_u))\right|>n\right\}\\&\quad\subset \{N \textrm{ has at least } n\textrm{ jumps in }]s,t]\}\subset\{M \textrm{ has at least } n\textrm{ jumps in }]s,t]\}=:B_n,
\end{align*}
as the sum of jumps grows only at jump times and only by a value bounded by $1$. Introduce
\[
    C_n:=B_{n}\setminus B_{n+1}=\{M \textrm{ has } n\textrm{ jumps in the interval }]s,t]\}
\]
Hence we have the estimate
\begin{align*}
    \GE&\left[|Y|^2\I_{\{|Y|>n\}}\right]=  \sup_{\theta\in\a^{\u}_{0,T}}\E^{\P_0}\left[\left|\sum_{s<u\leq t}\psi(\Delta B^{0,\theta}_u)\right|^2\I_{\left\{\left|\sum_{s<u\leq t}\psi(\Delta B^{0,\theta}_u)\right|>n\right\}}\right]\\
    &=\sup_{\theta\in\a^{\u}_{0,T}}\E^{\P_0}\left[\left|\sum_{s<u\leq t}\psi(\theta^d(u,\Delta N_u))\right|^2\I_{\left\{\left|\sum_{s<u\leq t}\psi(\theta^d(u,\Delta N_u))\right|>n\right\}}\right]\\
&\leq \sup_{\theta\in\a^{\u}_{0,T}}\E^{\P_0}\left[\sum_{s<u\leq t}\left|\psi(\theta^d(u,\Delta N_u))\right|^2\I_{B_n}\right]
\\
&\leq \sum_{m=n}^{\infty}\sup_{\theta\in\a^{\u}_{0,T}}\E^{\P_0}\left[\sum_{s<u\leq t}\left|\psi(\theta^d(u,\Delta N_u))\right|^2\I_{C_m}\right]
\leq \sum_{m=n}^{\infty}\sup_{\theta\in\a^{\u}_{0,T}}\E^{\P_0}\left[m^2\I_{C_m}\right]
\\&=\sum_{m=n}^{\infty}\, m^2\P_0(C_m), \to 0\textrm{ as }n\to\infty,
\end{align*}
because the Poisson random variable has second moment finite and the number of jumps of the Poisson process in the fixed intervals is Poisson-distributed.

We conclude the proof by noting that $Y$ satisfies the characterization in Proposition \ref{prop_qL_are_in_Lip}, thus belongs to the space $L^2_G(\Omega)$ (and therefore also to $L^1_G(\Omega)$).
\end{proof}
\begin{cor}
	Let $\H^p_G([0,T]\times\r0)$ denote the topological completion of $\H^S_G([0,T]\times\r0)$ under the norm $\|.\|_{\H_G^p([0,T]\times\r0)},\ p=1,2$. Then \ito integral can be continuously extended by the continuity of the operator $I$ to the whole space $\H^p_G([0,T]\times\r0),\ p=1,2$. Moreover, by Theorem \ref{tw_integral_in_LpG} we know that the extended operator takes value in $L^p_G(\Omega_T),\ p=1,2$. Lastly, the formula from Definition \ref{def_ito_int} still holds for all $K\in\hgTR$.
\end{cor}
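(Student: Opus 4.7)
My plan has two parts. First, the continuous extension follows from the standard bounded linear transformation principle: Theorem \ref{tw_integral_continuous} says that $I\colon\H^S_G([0,T]\times\r0)\to L^p_G(\Omega_T)$ is continuous linear with operator norm at most $C_T^{1/p}$. Since $L^p_G(\Omega_T)$ is a Banach space and $\H^S_G([0,T]\times\r0)$ is dense in its completion $\H^p_G([0,T]\times\r0)$ by definition, $I$ extends uniquely and continuously to the whole of $\H^p_G([0,T]\times\r0)$ while preserving the same norm bound.

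For the pathwise formula on $\H^p_G$, I would fix $K\in\H^p_G([0,T]\times\r0)$ and choose simple approximants $\{K_n\}\subset\H^S_G([0,T]\times\r0)$ with $\|K_{n+1}-K_n\|_{\H_G^p([0,T]\times\r0)}\leq 2^{-n}$. For each simple integrand Definition \ref{def_ito_int} applies pathwise q.s., so applying the norm bound from Theorem \ref{tw_integral_continuous} to differences yields
\[
    \GE\Bigl[\Bigl|\sum_{0<u\leq T}(K_{n+1}-K_n)(u,\Delta X_u)\Bigr|^p\Bigr]\leq C_T\cdot 2^{-np}.
\]
Combining Chebyshev's inequality with a Borel--Cantelli-type argument in the capacity framework (the capacity $c$ is countably subadditive since it is a supremum of probability measures), the series $\sum_n\bigl|\sum_{0<u\leq T}(K_{n+1}-K_n)(u,\Delta X_u)\bigr|$ converges q.s., so $\sum_{0<u\leq T}K_n(u,\Delta X_u)$ converges q.s.\ to some random variable $S$. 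Since the same sequence also converges to the extended integral $I(K)$ in $L^p_G$-norm, which implies convergence in capacity, one gets $S=I(K)$ q.s.

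The delicate step is to identify $S$ with $\sum_{0<u\leq T}K(u,\Delta X_u)$ for a suitably chosen q.s.\ representative of $K$. This is exactly where finite activity (Proposition \ref{prop_finite_activity}) becomes essential: outside a polar set each path of $X$ has only finitely many jumps on $[0,T]$, so the pathwise sum is in fact a finite sum. Selecting the representative $K(u,z):=\lim_n K_n(u,z)$ wherever the pointwise limit exists, the finiteness of the summation lets one commute the limit and the sum, yielding
\[
    S=\lim_{n\to\infty}\sum_{0<u\leq T}K_n(u,\Delta X_u)=\sum_{0<u\leq T}K(u,\Delta X_u)\quad\text{q.s.}
\]
I expect the main technical obstacle to be the rigorous justification of this interchange, which requires checking that, outside a polar set, the chosen representative of $K$ is defined at each of the (countably many, in fact q.s.\ finitely many) random evaluation points $(u,\Delta X_u)$; without the finite-activity assumption the interchange of limit and sum would not be automatic.
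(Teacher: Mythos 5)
The paper states this corollary without proof, treating it as immediate from Theorem \ref{tw_integral_continuous}; your first paragraph is exactly that standard bounded-linear-extension argument and is correct. The gap is in your treatment of the pathwise formula. Your Chebyshev/Borel--Cantelli argument does show that the pathwise sums $S_n:=\sum_{0<u\leq T}K_n(u,\Delta X_u)$ converge q.s.\ to $S=I(K)$ along a fast subsequence. But the step where you set $K:=\lim_n K_n$ pointwise and ``commute the limit and the sum'' does not follow from what you have established, and finite activity alone does not close it. The norm $\|\cdot\|_{\H_G^p([0,T]\times\r0)}$ only sees $K$ through $\int_0^T\sup_{v\in\v}\int_{\r0}|K(u,z)|^p v(dz)\,du$, i.e.\ up to $du$-null sets of times; the jump times of a path are finitely many points, hence Lebesgue-null, so norm convergence of $K_n$ carries no information about the values $K_n(u_j,\Delta X_{u_j})$ at the jump points, and the pointwise limit you propose as a representative need not exist there. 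Nor can you recover termwise convergence from the q.s.\ convergence of the finite sums $S_n$: two terms can diverge in opposite directions while their sum converges. So the identification $S=\sum_{0<u\leq T}K(u,\Delta X_u)$ is not yet justified.

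The repair is to control the sum of absolute values rather than the signed sum: running the compensator computation from the proof of Theorem \ref{tw_integral_continuous} on the nonnegative field $|K_{n+1}-K_n|$ gives
\[
\GE\Bigl[\sum_{0<u\leq T}|K_{n+1}-K_n|(u,\Delta X_u)\Bigr]\leq C_T\,\|K_{n+1}-K_n\|_{\H_G^1([0,T]\times\r0)},
\]
and since $\mu(\r0)=1$ and $\sup_{v\in\v}v(\r0)<\infty$, Cauchy--Schwarz bounds the $\H^1_G$-norm by a constant times the $\H^2_G$-norm, so fast convergence in either norm makes the right-hand side summable in $n$. Then $\sum_n\sum_{0<u\leq T}|K_{n+1}-K_n|(u,\Delta X_u)<\infty$ q.s., which dominates every individual term and forces $K_n(u_j,\Delta X_{u_j})$ to converge at each jump point q.s.\ Only at this stage is your representative well defined at the evaluation points, and the interchange of limit and (q.s.\ finite) sum becomes legitimate.
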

\begin{rem}
	Since the jump measure is not compensated, there is no reason to expect that the expectation of such an integral should be $0$. Moreover, it is easy to check that in general the \itolevy integral is not a symmetric random variable. As the consequence the nature of this integral is significantly different from \ito integral w.r.t. $G$-Brownian motion.
\end{rem}

\section{$G$-\itolevy processes. \ito formula}\label{sec_ito_form}
In this section we will introduce $G$-\ito\levy processes. Assume $\u$ to be of the form\break $\u:=\v\times\{0\}\times\q$, i.e. there is no drift-uncertainty. Assume moreover that the set $\q$ is bounded and convex. Then the $G$-\levy process $X$ associated with $\u$ might be represented as $X:=B+L$, where $B$ is a $G$-Brownian motion associated with $\q$ and $L$ is a pure-jump $G$-\levy process associated with $\v$.\footnote{Formally, we should introduce new operators $G^c$ and $G^d$ which would produce $G^c$-Brownian motion and pure jump $G^d$-\levy process. However, we think that in this paper this slight abuse of notation does not lead to any confusion, so we will keep it.}

Recall the following notation
\[
	x\cdot y:= x^Ty,\quad |x|:=\sqrt{x\cdot x},\quad \textrm{and } \gamma\colon \beta:=\tr(\gamma\beta),\quad |\gamma|:=\sqrt{\gamma\colon\gamma},
\]
where $x,y,\in\R^d$, $\gamma,\beta\in \s^d$ ($\s^d$ is the space of all $d\times d$-dimensional symmetric matrices).

We will also use the following definition: the process $Z$ taking values in a metric space $(\mathcal{X},d)$ is an elementary process, if it has the form
\[
	Z_t=\sum_{n=1}^N\, \phi_n(X_{t_1},\ldots,X_{t_n})\I_{]t_{n-1},t_n]},
\]
where $0\leq t_1<\ldots t_N<\infty$ and $\phi_n\colon\R^{d\times n}\to \mathcal{X}$ is Lipschitz continuous and bounded. 

Define the following spaces
\begin{enumerate}
	\item Let $\H_G^2(0,T)$ denote the completion of all $\R^d$-valued elementary processes under the norm
	\[
		\|Z\|^2_{\hgT}:=\GE\left[\int_0^TZ_sZ_s^T\colon d\qB_s\right].
	\]
	For a process $Z\in\hgT$ one can define the stochastic integral denoted by\break $\int_0^t Z_s\cdot dB_s$. Note that usually one uses different elementary processes (with random variables which are cylinder functions of the $G$-Brownian motion and not a $G$-\levy process). However we can easily generalize the \ito integral for this larger class of integrands, because the increment of the $G$-Brownian motion is independent of the past of the pure-jump $G$-\levy process. \footnote{If $\q$ is also bounded away from $0$, then there exist constants $0<A\leq B$ such that $At\cdot Id_d\leq \qB_t\leq Bt\cdot Id_d$ and the norm $\H_G^2(0,T)$ is equivalent to the following norm: $\GE\left[\int_0^T|Z_s|^2ds\right]^{1/2}$. }
	\item Let $\mg1T$ denote the completion of all $\R$-valued elementary processes under the norm
	\[
		\|\eta\|_{\mg1T}:=\GE\left[\int_0^T|\eta_s|ds\right].
	\]
	For a process $\eta\in\mg1T$ one can define the following integral $\int_0^t \eta_s ds$.
	\item Let $\Mg1T$ denote the completion of all $\s^d$-valued elementary processes under the norm
	\[
		\|\eta\|_{\Mg1T}:=\GE\left[\int_0^T|\eta_s|ds\right].
	\]
	For a process $\eta\in\Mg1T$ one can define the following integral $\int_0^t \eta_s\colon d\qB_s$.
\end{enumerate}
\begin{defin}
	The process $Y=(Y^1,\ldots,Y^m)$ is called an $m$-dimensional $G$-\itolevy process, if there exist processes $Z^i\in \hgT$, $\alpha^i\in \mg1T$, $\beta^i\in\Mg1T$  and $K^i\in \hgTR$, $i=1,\ldots,m$ such that for all $t\in [0,T]$
	\begin{equation}\label{eq_ito_levy_processes}
		Y^i_t=Y^i_0+\int_0^t \alpha^i_s ds+ \int_0^t \beta^i_s:d\qB_s+\int_0^tZ^i_s \cdot dB_s  +\int_0^t\int_{\r0}\, K^i(s,z) L(ds,dz).\ q.s.
	\end{equation}
\end{defin}

\begin{tw}[\ito formula]\label{tw_ito_formula}	
	Let $Y$ be a $G$-\itolevy process in $\R^m$ with representation (\ref{eq_ito_levy_processes}). Let $f\in C_b^2(\R^m)$. Then $f(Y_t)$ is also a $G$-\levy-\ito process with the representation
	\begin{align*}
		f(Y_t)&=f(Y_0)+ \sum_{i=1}^m\int_0^t\frac{\partial f}{\partial x_i} (Y_s)\alpha^i_s ds+ \sum_{i=1}^m\int_0^t\frac{\partial f}{\partial x_i} (Y_s)\beta^i_s\colon d\qB_s\\&+\frac{1}{2}\sum_{i,j=1}^m\int_0^t\frac{\partial^2 f}{\partial x_i \partial x_j} (Y_s)Z^i_s(Z^j_s)^T\colon d\qB_s+\sum_{i=1}^m\int_0^t\frac{\partial f}{\partial x_i}(Y_s)Z^i_s\cdot dB_s\\&+\int_0^t\int_{\r0}\left[f(Y_{s-}+K(s,z))-f(Y_{s-})\right]L(ds,dz), q.s.
	\end{align*}
	where $K:=(K^1,\ldots,K^m)$.
\end{tw}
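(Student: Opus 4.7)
The plan is to reduce the statement to the classical $G$-\ito formula for continuous $G$-\ito processes (due to Peng) together with an exact pathwise contribution from the jumps, exploiting the finite-activity property via Proposition \ref{prop_finite_activity}. First I would fix $\omega$ in the full-capacity set on which $L$ has only finitely many jumps on $[0,T]$; on this set enumerate the jumps in $(0,t]$ as $\tau_1(\omega)<\cdots<\tau_N(\omega)$ and set $\tau_0:=0$, $\tau_{N+1}:=t$. On every sub-interval $[\tau_k,\tau_{k+1})$ the process $Y$ admits the purely continuous representation
\[
Y_s^i=Y_{\tau_k}^i+\int_{\tau_k}^s\alpha^i_u\,du+\int_{\tau_k}^s\beta^i_u\colon d\qB_u+\int_{\tau_k}^s Z^i_u\cdot dB_u,
\]
since the $L$-integral is constant between jumps, while at each $\tau_k$ one has exactly $Y_{\tau_k}^i-Y_{\tau_k-}^i=K^i(\tau_k,\Delta X_{\tau_k})$.

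Second, for elementary coefficients $\alpha^i,\beta^i,Z^i,K^i$ I would apply Peng's continuous $G$-\ito formula on each sub-interval $[\tau_k,\tau_{k+1})$, which produces the four continuous terms on the right-hand side (with $\alpha$, $\beta\colon d\qB$, $Z\cdot dB$ and $\tfrac{1}{2}ZZ^T\colon d\qB$). Adding the exact jump contribution $f(Y_{\tau_k-}+K(\tau_k,\Delta X_{\tau_k}))-f(Y_{\tau_k-})$ at each $\tau_k$ and telescoping recovers $f(Y_t)-f(Y_0)$; the finite sum of jump contributions coincides q.s.\ with $\int_0^t\!\int_{\r0}[f(Y_{s-}+K(s,z))-f(Y_{s-})]L(ds,dz)$ directly from Definition \ref{def_ito_int} (note that $K(s,0)=0$ for elementary $K$, so only true jump times contribute). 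This establishes the formula whenever the integrands are elementary.

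Third, I would pass to general integrands by density, choosing approximating sequences of elementary $\alpha^{n},\beta^{n},Z^{n},K^{n}$ converging to the given coefficients in the norms of $\mg1T$, $\Mg1T$, $\hgT$ and $\hgTR$ respectively, producing $G$-\itolevy processes $Y^n\to Y$ in $L^2_G$. Since $f\in C^2_b$, the bound $|f(y+k)-f(y)|\leq\|\nabla f\|_\infty|k|$ keeps the jump integrand $f(Y_{s-}^n+K^n)-f(Y_{s-}^n)$ inside $\hgTR$, while the boundedness and Lipschitz continuity of $\partial_if$ and $\partial_{ij}f$ ensure that the compositions $\partial_if(Y^n)$ and $\partial_{ij}f(Y^n)$ lie in the appropriate spaces and converge to $\partial_if(Y)$ and $\partial_{ij}f(Y)$. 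Theorem \ref{tw_integral_continuous} together with the continuity of the other three integrals in their defining norms then yields convergence of every term in the formula.

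The main obstacle sits in this third step. One has to verify that the compositions $\partial_if(Y^n)$ and $\partial_{ij}f(Y^n)$ genuinely belong to $\mg1T$, $\Mg1T$ and $\hgT$ so that the products with $\alpha^{i,n}$, $\beta^{i,n}$ and $Z^{i,n}(Z^{j,n})^T$ may be integrated in the $G$-sense, and one has to propagate convergence of $Y^n$ to $Y$ through these nonlinear compositions as well as through the map $\omega\mapsto Y_{s-}(\omega)$ appearing inside the jump integrand. All remaining steps are either pathwise calculations valid on a q.s.\ full set or routine consequences of Peng's continuous $G$-\ito formula combined with Theorem \ref{tw_integral_continuous}.
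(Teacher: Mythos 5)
Your overall skeleton—telescoping over the successive jump times, applying the continuous $G$-\ito formula between jumps, and adding the exact jump increments, which sum q.s.\ to the $L(ds,dz)$-integral—is the same decomposition the paper uses. However, there is a genuine gap at the heart of your second step: you cannot apply Peng's continuous $G$-\ito formula ``pathwise'' on the random intervals $[\tau_k(\omega),\tau_{k+1}(\omega))$. The integrals $\int Z_s\cdot dB_s$ and $\int \beta_s\colon d\qB_s$ are not defined $\omega$-by-$\omega$; they are limits in $L^2_G$, so fixing $\omega$ on a full-capacity set and invoking the continuous formula between consecutive jump times is not a legitimate operation. The correct way to localize is to multiply the integrands by $\I_{]\tau_{k-1},\tau_k]}$, but—as the paper points out explicitly—the resulting integrands in general lose quasi-continuity and therefore fall outside the spaces $\hgT$, $\Mg1T$, $\mgT$ on which the standard $G$-\ito integral and Peng's \ito formula are defined. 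The paper closes precisely this gap by invoking the stopping-time calculus of Li and Peng (\cite{Li_Peng_stopping}, Theorem 5.4 for the \ito formula and Lemma 4.3 to identify the localized process with $Y$ on $[\tau_{n-1},\tau_n[$), which defines the integrals without assuming quasi-continuity of the integrand. Without citing or reconstructing that machinery, the continuous part of your argument does not go through.

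Your proposed fallback—approximating the coefficients by elementary processes and passing to the limit—is not how the paper proceeds, and you yourself flag it as the unresolved ``main obstacle.'' Indeed it is: one would have to show that the nonlinear compositions $\partial_i f(Y^n)$ and $\partial_{ij} f(Y^n)$ lie in the appropriate integrand spaces and that products such as $\partial_i f(Y^n)\,Z^{i,n}(Z^{j,n})^T$ converge in the relevant $G$-norms, which requires quasi-continuity and uniform-integrability arguments that are nowhere supplied. So the proposal is incomplete rather than wrong in spirit: the decomposition is right, but the key technical ingredient (an \ito formula valid for integrands stopped at the random jump times, i.e.\ the Li--Peng framework) is missing, and the substitute limiting argument is left open.
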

\begin{proof}
	Firstly, define the following random times
	\[
		\tau_0=0,\quad \tau_n:=\inf\{t>\tau_{n-1}\colon 0\neq\Delta X_t(=\Delta L_t) \},\ n=1,2,\ldots
	\]
	Each $\tau_n$ is a stopping time w.r.t. filtration generated by the canonical process $X$. Due to finite activity $\tau_n\uparrow\infty$ q.s. Thus we have
	\begin{align}\label{eq_ito_form0}
		f(Y_t)-f(Y_0)&=\sum_{n=1}^{\infty} \left[f(Y_{t\wedge \tau_n})-f(Y_{t\wedge \tau_{n-1}})\right]\notag\\
		&=\sum_{n=1}^{\infty} \left[f(Y_{t\wedge \tau_n-})-f(Y_{t\wedge \tau_{n-1}})\right]+\sum_{n=1}^{\infty} \left[f(Y_{t\wedge \tau_n})-f(Y_{t\wedge \tau_{n}-})\right],\ q.s.
	\end{align}
	Note that the second sum might be written as
		\begin{align}\label{eq_ito_form3}
		\sum_{n=1}^{\infty} \left[f(Y_{t\wedge \tau_n})-f(Y_{t\wedge \tau_{n}-})\right]&=\sum_{n=1}^{\infty} \left[f(Y_{t\wedge \tau_n-}+K(t\wedge \tau_n,\Delta L_{t\wedge\tau_n})-f(Y_{t\wedge \tau_{n}-})\right]\notag\\
		&=\int_0^t\int_{\r0} \left[f(Y_{s-}+K(s,z))-f(Y_{s-})\right]L(ds,dz),\ q.s.
	\end{align}
	The first sum is more complicated and one has to be cautious by dealing with stopping times here. Fix $n$ and introduce the process $Y_{n,t}:=(Y^1_{n,t},\ldots,Y^m_{n,t})$ where $Y_{n,t}^i$ is defined as
	\[
		 Y^i_{n,t}=Y^i_{t\wedge\tau_{n-1}}+\int_0^t\alpha^i_s\I_{]\tau_{n-1},\tau_n]}ds+\int_0^t\beta^i_s\I_{]\tau_{n-1},\tau_n]}\colon d\qB_s+\int_0^tZ^i_s\I_{]\tau_{n-1},\tau_n]}\cdot dB_s.
	\]
	Note that the integrands may fall out of their spaces, when multiplied by a factor $\I_{]\tau_{n-1},\tau_n]}$, as the multiplied integrands might lose so-called quasi-continuity (see \cite{Denis_function_spaces} and \cite{Song_hitting} for the discussion of this problem). This general problem is not an obstacle for us, if we use the definition of  integrals which does not assume the quasi-continuity of the integrand. Such a definition was introduced by Li and Peng in \cite{Li_Peng_stopping} and we can utilize it immediately. They also gave the \ito formula for such processes (Theorem 5.4), which we apply now to the process $Y^n_t$ and $f$. Thus
	\begin{align}\label{eq_ito_form1}
		f(Y_{n,t})=&f(Y_{t\wedge\tau_{n-1}})+\sum_{i=1}^m\int_0^t\frac{\partial f}{\partial x_i} (Y_{n,s})\alpha^i_s\I_{]\tau_{n-1},\tau_n]} ds\notag\\
		&+ \sum_{i=1}^m\int_0^t\frac{\partial f}{\partial x_i} (Y_{n,s})\beta^i_s\I_{]\tau_{n-1},\tau_n]}\colon d\qB_s+\sum_{i=1}^m\int_0^t\frac{\partial f}{\partial x_i}(Y_{n,s})Z^i_s\I_{]\tau_{n-1},\tau_n]}\cdot dB_s\notag\\
		&+\frac{1}{2}\sum_{i,j=1}^m\int_0^t\frac{\partial^2 f}{\partial x_i \partial x_j} (Y_{n,s})Z^i_s(Z^j_s)^T\I_{]\tau_{n-1},\tau_n]}\colon d\qB_s,\ q.s.
	\end{align}

Firstly, notice that by Lemma 4.3 in \cite{Li_Peng_stopping} one has
\[
			Y^i_{n,t}=Y^i_{t\wedge\tau_{n-1}}+\int_{]t\wedge\tau_{n-1},t\wedge\tau_n]}\left[\alpha^i_sds+\beta^i_s\colon d\qB_s+Z^i_s\cdot dB_s\right],\quad q.s.
\]
Thus $Y^i_{n,t}=Y^i_t$ q.s. on $[\tau_{n-1},\tau_n[$, $i=1,\ldots,m$ and hence we can rewrite (\ref{eq_ito_form1}) as
	\begin{align}\label{eq_ito_form2}
		f&(Y_{t\wedge\tau_n-})=f(Y_{t\wedge\tau_{n-1}})+\sum_{i=1}^m\int_0^t\frac{\partial f}{\partial x_i} (Y_{s})\alpha^i_s\I_{]\tau_{n-1},\tau_n]} ds\notag\\
		&+ \sum_{i=1}^m\int_0^t\frac{\partial f}{\partial x_i} (Y_s)\beta^i_s\I_{]\tau_{n-1},\tau_n]}\colon d\qB_s+\sum_{i=1}^m\int_0^t\frac{\partial f}{\partial x_i}(Y_{s})Z^i_s\I_{]\tau_{n-1},\tau_n]}\cdot dB_s\notag\\
		&+\frac{1}{2}\sum_{i,j=1}^m\int_0^t\frac{\partial^2 f}{\partial x_i \partial x_j} (Y_{s})Z^i_s(Z^j_s)^T\I_{]\tau_{n-1},\tau_n]}\colon d\qB_s,\ q.s.
	\end{align}
Now taking a sum in (\ref{eq_ito_form2}) we get that
\begin{multline}\label{eq_ito_form4}
	\sum_{n=1}^{\infty} \left[f(Y_{t\wedge \tau_n-})-f(Y_{t\wedge \tau_{n-1}})\right]=\sum_{i=1}^m\int_0^t\frac{\partial f}{\partial x_i} (Y_s)\alpha^i_s ds+ \sum_{i=1}^m\int_0^t\frac{\partial f}{\partial x_i} (Y_s)\beta^i_s\colon d\qB_s\\+\frac{1}{2}\sum_{i,j=1}^m\int_0^t\frac{\partial^2 f}{\partial x_i \partial x_j} (Y_s)Z^i_s(Z^j_s)^T\colon d\qB_s+\sum_{i=1}^m\int_0^t\frac{\partial f}{\partial x_i}(Y_s)Z^i_s\cdot dB_s,\ q.s.
\end{multline}
Combining eq. (\ref{eq_ito_form0}), (\ref{eq_ito_form3}) and (\ref{eq_ito_form4}) we get the assertion of the theorem.
\end{proof}

\section{Diffusions with jump uncertainty}\label{sec_diffusions}
At the end of this section, we will establish the SDE's and BSDE's w.r.t. $G$-\levy processes. Once again we assume that $\u=\v\times\{0\}\times\q$ with $\q$ bounded and convex. Thus the quadratic variation $\qB_t$ might be dominated by $M\cdot t\cdot Id_d$ for some constant $M$, or more specifically, the quadratic covariation $\langle B^i,B^j\rangle_t$ might be dominated by $M^{i,j}\cdot t$.

We will follow the idea presented by Peng in \cite{Peng_skrypt}, Chapter V.

Let us introduce the new norm on the integrands: for a $\R^n$-dimensional process $Z$ define
\[
	\|Z\|^p_{\hMTp}:=\int_0^T\GE[|Z_t|^p]dt,\ p\geq1.
\]
The completion of the space of $n$-dimensional elementary processes under this norm will be denoted as $\hMTp$. Note that
\[
	 \GE\left[\int_0^T|Z_t|^pdt\right]\leq \int_0^T\GE[|Z_t|^p]dt,
\]
thus appropriate integrals will be always well defined.

Similarly, we need to adjust the space of integrands for the jump measure.  Let $\hhR$ denote the completion of all $\H^S_G([0,T]\times\r0)$ under the norm
	\[
		\|K\|^2_{\hhR}:=\int_0^T\,\GE\left[\sup_{v\in\v}\,\int_{\r0}K^2(u,z)v(dz)\right]du.
	\]
	
\subsection{SDE's driven by $G$-\levy processes}
We will consider the following SDE driven by the $d$-dimensional $G$-Brownian motion $B$ and the $d$-dimensional pure jump $G$-\levy process $L$
\begin{equation}\label{eq_sde}
	dY^i_s=b^i(s,Y_s)ds+h^i(s,Y_{s})\colon d\qB_s+\sigma^i(s,Y_{s})\cdot dB_s+\int_{\r0}K^i(s,Y_{s-},z)L(dz,ds),
\end{equation}
where $i=1,\ldots,n$, $Y=(Y^1,\ldots,Y^n)$. Denote $b=(b^1,\ldots,b^n)$, $h=(h^1,\ldots,h^n)$, $\sigma=(\sigma^1,\ldots,\sigma^n)$ and $K=(K^1,\ldots,K^n)$.

We will work under following standard assumptions.
\begin{ass}\label{ass_sde}
	\begin{enumerate}
		\item $b\colon [0,T]\times \R^n\times \Omega\to  \R^n$ is Lipschitz continuous w.r.t. $x$ uniformly w.r.t. $(t,\omega)$ (i.e. $|b(t,x)-b(t,y)|\leq c|x-y|$) and $b(.,x)\in \hMT$ for each $x\in \R^n$.
	\item $\sigma\colon [0,T]\times \R^n\times \Omega\to  \R^{d\times n}$ is Lipschitz continuous w.r.t. $x$ uniformly w.r.t. $(t,\omega)$ (i.e. $|\sigma(t,x)-\sigma(t,y)|\leq c|x-y|$) and each row of $\sigma(.,x)$ belongs to $\hMT$ for each $x\in \R^n$.
		\item $K\colon [0,T]\times \R^n\times \R^d\times \Omega\to  \R^n$ is Lipschitz continuous w.r.t. $x$ uniformly w.r.t. $(t,\omega,z)$ (i.e. $|K(t,x,z)-K(t,y,z)|\leq c|x-y|$) and $K(.,x,.)\in \hhR$ for each $x\in \R^n$.
		\item $h\colon [0,T]\times \R^n\times \Omega\to  (\s^d)^{\times n}$ is Lipschitz continuous w.r.t. $x$ uniformly w.r.t. $(t,\omega)$ (i.e. $|h(t,x)-h(t,y)|\leq c|x-y|$) and $h(.,x)$ for each $x\in \R^n$ is a symmetric $d\times d$ matrix with each element taking values in $\hMT$.
	\end{enumerate}
\end{ass}

\begin{defin}
	The solution of the SDE (\ref{eq_sde}) with the initial condition $y_0\in\R^n$ is the process $Y\in \hMT$, satisfying
	\begin{align*}
	Y^i_t=&y_0+\int_0^t b^i(s,Y_s)ds+\int_0^t h^i(s,Y_{s})\colon d\qB_s\\&+\int_0^t \sigma^i(s,Y_{s})\cdot dB_s+\int_0^t \int_{\r0}K^i(s,Y_{s-},z)L(dz,ds).
	\end{align*}
\end{defin}

\begin{tw}\label{tw_existence_sde}
	Under the Assumption \ref{ass_sde} there exists the unique solution of the SDE (\ref{eq_sde}) with the initial condition $y_0\in \R^n$.
\end{tw}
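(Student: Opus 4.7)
The plan is to construct the solution by Picard iteration in the space $\hat{\mathcal{M}}^2_G(0,T)$ and to read off uniqueness from the same contraction-type estimate. Set $Y^{(0)}_t \equiv y_0$ and define iteratively
\begin{align*}
Y^{(n+1),i}_t := &\, y_0 + \int_0^t b^i(s,Y^{(n)}_s)\,ds + \int_0^t h^i(s,Y^{(n)}_s):d\qB_s \\
& + \int_0^t \sigma^i(s,Y^{(n)}_s)\cdot dB_s + \int_0^t\!\int_{\r0} K^i(s,Y^{(n)}_{s-},z)\,L(ds,dz).
\end{align*}
The first bookkeeping step is to verify that every iterate lies in $\hat{\mathcal{M}}^2_G(0,T)$. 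Cauchy--Schwarz together with the domination $\qB_s\leq Ms\cdot Id_d$ handles the $ds$ and $d\qB_s$ integrals, the $G$-It\^o isometry on $\hgT$ handles the $dB$ integral, and Theorem \ref{tw_integral_continuous} handles the jump integral, each combined with the membership conditions in Assumption \ref{ass_sde}.

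The heart of the argument is a Gronwall-type estimate on successive differences $\delta^n_t := Y^{(n+1)}_t - Y^{(n)}_t$. Expanding $|\delta^n_t|^2$ via $(a+b+c+d)^2\leq 4(a^2+b^2+c^2+d^2)$ reduces the $\GE$-estimate to four pieces, each controlled by the Lipschitz property of $b,h,\sigma,K$ in $x$. For the first three this is standard; for the jump piece Theorem \ref{tw_integral_continuous} yields
\[
\GE\!\left[\left|\int_0^t\!\int_{\r0}\!\bigl[K^i(s,Y^{(n)}_{s-},z)-K^i(s,Y^{(n-1)}_{s-},z)\bigr]L(ds,dz)\right|^2\right] \leq C_T\!\int_0^t \GE\!\left[\sup_{v\in\v}\!\int_{\r0}\!|\Delta K^i|^2 v(dz)\right]ds,
\]
and the uniform-in-$z$ Lipschitz bound on $K$ combined with $\sup_{v\in\v}v(\r0)=1$ pulls $|\delta^{n-1}_s|^2$ outside both the $v$-integral and the supremum. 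Assembling the four contributions yields
\[
\GE\!\left[|\delta^n_t|^2\right] \leq C\int_0^t \GE\!\left[|\delta^{n-1}_s|^2\right]ds
\]
for a constant $C=C(c,M,T)$, and iterating gives $\GE[|\delta^n_t|^2]\leq (Ct)^n/n!\cdot\sup_{s\in[0,T]}\GE[|\delta^0_s|^2]$.

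Summing this bound shows $(Y^{(n)})$ is Cauchy in $\hat{\mathcal{M}}^2_G(0,T)$; passing to the limit in each of the four integrals (using the continuity of the corresponding integration maps on their respective integrand spaces, in particular Theorem \ref{tw_integral_continuous} for the jump integral together with the Lipschitz hypothesis to verify that $K(\cdot,Y^{(n)}_{\cdot-},\cdot)\to K(\cdot,Y_{\cdot-},\cdot)$ in $\hhR$) gives a limit $Y\in\hat{\mathcal{M}}^2_G(0,T)$ that solves the SDE. Uniqueness follows by applying the same four-piece estimate to two putative solutions $Y,\bar Y$ and concluding from $\GE[|Y_t-\bar Y_t|^2]\leq C\int_0^t\GE[|Y_s-\bar Y_s|^2]ds$ via Gronwall's lemma that $\GE[|Y_t-\bar Y_t|^2]\equiv 0$.

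The main obstacle I anticipate is the jump-integral term in the contraction estimate: unlike the classical semimartingale setting one has to cope with the supremum over the whole family $\v$ of L\'evy measures that appears in Theorem \ref{tw_integral_continuous}. What makes the scheme work is precisely the \emph{uniform-in-$z$} Lipschitz hypothesis on $K$, which allows $|Y_{s-}-\bar Y_{s-}|^2$ to be extracted from both $\int_{\r0}\cdot\, v(dz)$ and $\sup_{v\in\v}$, after which the jump contribution sits on exactly the same footing as the Brownian and Lebesgue parts and the remainder is a routine adaptation of the classical Picard argument.
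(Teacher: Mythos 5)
Your proposal is correct and rests on exactly the same key estimate as the paper, namely $\GE[|\delta_t|^2]\leq C\int_0^t\GE[|\delta_s|^2]\,ds$ obtained from the four-piece decomposition, with the jump term controlled by Theorem \ref{tw_integral_continuous} and the uniform-in-$z$ Lipschitz hypothesis on $K$. The only (cosmetic) difference is how the fixed point is extracted: the paper multiplies by the weight $e^{-2Ct}$ and integrates to exhibit the solution map as a contraction on $\hMT$ under an equivalent norm, invoking the Banach fixed point theorem, whereas you iterate Picard approximations and sum the factorial bound --- two standard and interchangeable ways of closing the same argument.
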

\begin{proof}
	The proof is standard. We introduce the mapping
	\[
		\Lambda\colon \hMT\to \hMT
	\]
	by setting $\Lambda^i_t,\ t\in[0,T],\ i=1,\ldots,n$ as
	\begin{align*}
		\Lambda^i_t(Y):=&y_0+\int_0^t b^i(s,Y_s)ds+\int_0^t h^i(s,Y_{s-})\colon d\qB_s\\&+\int_0^t \sigma^i(s,Y_{s-})\cdot dB_s+\int_0^t \int_{\r0}K^i(s,Y_{s-},z)L(dz,ds).
	\end{align*}
	By the Lipschitz continuity of the coefficients we easily obtain the following estimate
	\begin{align*}
		\GE&\left[|\Lambda_t(Y)-\Lambda_t(Y')|^2\right]\leq C\GE\left[\int_0^t |b(s,Y_s)-b(s,Y'_s)|^2ds\right]\\
&+C\GE\left[\left|\int_0^t (h(s,Y_{s})-h(s,Y'_{s}))\colon d\qB_s\right|^2\right]+C\GE\left[\left|\int_0^t (\sigma(s,Y_{s})-\sigma(s,Y'_{s}))\cdot dB_s\right|^2\right]\\		
&+C\GE\left[\left|\int_0^t \int_{\r0}(K(s,Y_{s-},z)-K(s,Y'_{s-},z))L(dz,ds)\right|^2\right]\\
	&\leq		C\int_0^t \GE\left[|b(s,Y_s)-b(s,Y'_s)|^2\right]ds+C\GE\left[\int_0^t |h(s,Y_{s})-h(s,Y'_{s})|^2ds\right]
	\\&+C\int_0^t \GE\left[|\sigma(s,Y_{s})-\sigma(s,Y'_{s})|^2 \right]ds\\		
&+C\int_0^t \GE\left[\sup_{v\in\v}\,\int_{\r0}|K(s,Y_{s-},z)-K(s,Y'_{s-},z)|^2v(dz)\right]ds\\
		&\leq C \int_0^t \GE\left[|Y_s-Y'_s|^2\right]ds,\quad t\in[0,T].
	\end{align*}
	We have applied here some standard inequalities, domination of the quadratic covariation differential by $dt$, the continuity of the stochastic integrals w.r.t. the appropriate norms and the Lipschitz continuity of the coefficients. Note that he constant $C$ may vary from line to line, but depend only on the Lipschitz constant, dimensions $d$ and $n$, time horizon $T$ and the set $\u$.
	
	Now multiplying the both sides od inequality above by $e^{-2Ct}$ and integrating them on $[0,T]$, one gets
	\begin{align*}
		\int_0^T\GE\left[|\Lambda_t(Y)-\Lambda_t(Y')|^2\right]e^{-2Ct}dt&\leq C\int_0^T e^{-2Ct}\int_0^t \GE\left[|Y_s-Y'_s|^2\right]dsdt\\
		&\leq C\int_0^T\int_s^T e^{-2Ct} \GE\left[|Y_s-Y'_s|^2\right]dtds\\
		&=\frac{1}{2}\int_0^T(e^{-2Cs}-e^{-2CT})\GE\left[|Y_s-Y'_s|^2\right]ds.
	\end{align*}
	Thus we have the following inequality
	\[
		\int_0^T\GE\left[|\Lambda_t(Y)-\Lambda_t(Y')|^2\right]e^{-2Ct}dt\leq \frac{1}{2}\int_0^T\GE\left[|Y_t-Y'_t|^2e^{-2Ct}\right]dt,
	\]
	This equality shows that $\Lambda$ is a contraction mapping on $\hMT$ equipped with the norm
	\[
		\left(\int_0^T\GE\left[|Y_t|^2e^{-2Ct}\right]dt\right)^{1/2},
	\]
	which is equivalent to the norm $\|.\|_{\hMT}$. As a consequence there exists a unique fixed point of $\Lambda$, which is the solution of our SDE.
\end{proof}

\subsection{BSDE's and decoupled FBSDE's}
We will consider the following type of BSDE:
\begin{align}\label{eq_bsde}
	dY^i_t=&\GE\left[\xi^i+\int_t^T b^i(s,Y_s)ds+\int_t^T h^i(s,Y_{s})\colon d\qB_s \Big|\Omega_t\right],\quad t\in[0,T],
\end{align}
where $i=1,\ldots,n$, $Y=(Y^1,\ldots,Y^n)$ and $\xi=(\xi^1,\ldots, \xi^n)$.  Denote  $b=(b^1,\ldots,b^n)$ and \break $h=(h^1,\ldots,h^n)$.

We will work under following standard assumptions.
\begin{ass}\label{ass_bsde}
	\begin{enumerate}
		\item $\xi^i\in L^1_G(\Omega_T)$.
		\item $b\colon [0,T]\times \R^n\times \Omega\to  \R^n$ is Lipschitz continuous w.r.t. $x$ uniformly w.r.t. $(t,\omega)$ (i.e.  $|b(t,x)-b(t,y)|\leq c|x-y|$) and $b(.,x)\in \hM1T$ for each $x\in \R^n$.
		\item $h\colon [0,T]\times \R^n\times \Omega\to  (\s^d)^{\times n}$ is Lipschitz continuous w.r.t. $x$ uniformly w.r.t. $(t,\omega)$ (i.e. $|h(t,x)-h(t,y)|\leq c|x-y|$) and $h(.,x)$ for each $x\in \R^n$ is a random matrix such that each element belongs to $\hM1T$.
	\end{enumerate}
\end{ass}

\begin{defin}
	The solution of the BSDE (\ref{eq_bsde}) is the process $Y\in \hM1T$, satisfying eq.~(\ref{eq_bsde}).
\end{defin}

\begin{tw}\label{tw_existence_bsde}
	Under the Assumption \ref{ass_bsde} there exists a unique solution of the BSDE (\ref{eq_bsde}).
\end{tw}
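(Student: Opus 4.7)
The plan is to adapt the contraction-mapping argument of Theorem \ref{tw_existence_sde} to the backward setting, where the time integrals run from $t$ to $T$ and the conditional sublinear expectation $\GE[\cdot\,|\Omega_t]$ replaces the identity. I define a mapping $\Lambda\colon(\hM1T)^n\to(\hM1T)^n$ by
\[
\Lambda^i_t(Y) := \GE\!\left[\xi^i + \int_t^T b^i(s,Y_s)\,ds + \int_t^T h^i(s,Y_s)\colon d\qB_s \,\Big|\, \Omega_t\right],\quad i=1,\ldots,n,
\]
so that a fixed point of $\Lambda$ is exactly a solution of \eqref{eq_bsde}. The first step is to check well-posedness: the Lipschitz assumption and $Y\in\hM1T$ imply $b(\cdot,Y_\cdot),h(\cdot,Y_\cdot)\in\hM1T$, whence the time integrals belong to $L^1_G(\Omega_T)$ and the conditional $G$-expectation yields a process in $\hM1T$, by the framework of Peng (\cite{Peng_skrypt}, Chapter V).

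Next I would establish the key contraction estimate. By sub-additivity and monotonicity of the conditional sublinear expectation, together with the domination $d\qB_s\leq M\,ds\cdot Id_d$ (available since $\q$ is bounded and convex), one gets for all $Y,Y'\in(\hM1T)^n$
\[
|\Lambda_t(Y)-\Lambda_t(Y')|\leq \GE\!\left[\int_t^T\!|b(s,Y_s)-b(s,Y'_s)|\,ds+C\!\int_t^T\!|h(s,Y_s)-h(s,Y'_s)|\,ds \,\Big|\,\Omega_t\right].
\]
Taking the unconditional $\GE$ on both sides, using the tower property $\GE[\GE[\cdot\,|\Omega_t]]=\GE[\cdot]$, the elementary interchange $\GE[\int\cdot\,]\leq\int\GE[\cdot]$, and the Lipschitz bounds on $b$ and $h$, I obtain a constant $C$ depending only on the Lipschitz constant, $n$, $d$, $T$ and $\u$ such that
\[
\GE[|\Lambda_t(Y)-\Lambda_t(Y')|]\leq C\int_t^T\GE[|Y_s-Y'_s|]\,ds,\qquad t\in[0,T].
\]

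Finally I would apply the standard weighted-norm trick, using this time a \emph{growing} exponential because of the backward direction. Multiplying the previous inequality by $e^{\beta t}$, integrating over $[0,T]$ and invoking Fubini,
\[
\int_0^T\!\GE[|\Lambda_t(Y)-\Lambda_t(Y')|]\,e^{\beta t}\,dt\;\leq\; \frac{C}{\beta}\!\int_0^T\!\GE[|Y_s-Y'_s|]\,e^{\beta s}\,ds,
\]
so that choosing $\beta=2C$ makes $\Lambda$ a strict contraction on $(\hM1T)^n$ equipped with the norm $\int_0^T\GE[|Y_t|]\,e^{\beta t}\,dt$, which is equivalent to $\|\cdot\|_{\hM1T}$. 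Banach's fixed-point theorem then yields a unique solution of \eqref{eq_bsde}. The main delicate point I anticipate is verifying that $t\mapsto\Lambda_t(Y)$ really defines an element of $\hM1T$ rather than merely a family of $L^1_G(\Omega_T)$ random variables indexed by $t$; once the requisite time-regularity of the conditional $G$-expectation (as developed in Peng's framework cited in Section \ref{sec_preliminaries}) is in place, the remainder of the argument is routine.
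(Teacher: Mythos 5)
Your proposal is correct and follows exactly the route the paper intends: the paper omits the proof of Theorem \ref{tw_existence_bsde}, deferring to the SDE argument of Theorem \ref{tw_existence_sde} and to Theorem V.2.2 in \cite{Peng_skrypt}, and your contraction-mapping argument with the sub-additivity of $\GE[\cdot\,|\Omega_t]$, the domination of $d\qB$, and the exponentially weighted norm is precisely that standard argument adapted to the backward setting. The delicate point you flag (that $t\mapsto\Lambda_t(Y)$ lands in $\hM1T$) is indeed the only part requiring the regularity machinery from Peng's framework, and it is handled there as you describe.
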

\begin{proof}
	The proof is very similar to the SDE case and is nearly identical to the proof for the $G$-Brownian motion case (see Theorem V.2.2 in \cite{Peng_skrypt}), so we omit it.
\end{proof}

As the concequence, we can introduce decoupled FBSDE's. For simplicity assume $n=1$.

Consider the following SDE
\begin{align}\label{eg_SDE_in_FBSDE}
	dX_s^{t,\xi}&= b(X^{t,\xi}_s)ds+h(X^{t,\xi}_s) \colon d\qB_s+\sigma(X^{t,\xi}_s) \cdot dB_s+\int_{\r0}K(X^{t,\xi}_{s-},z)L(dz,ds),\notag\\
	X_t^{t,\xi}&=\xi,\quad s\in[t,T],
\end{align}
where $\xi\in L^2_G(\Omega_t)$, $\sigma\colon \R\to \R^d$, $b\colon \R\to \R$, $h\colon \R\to \s^d$ and $K\colon \R\times\R^d\to\R$ are deterministic and Lipschitz continuous functions  w.r.t. $(x,y)$ (and uniformly in $z$ for $K$). By Theorem \ref{tw_existence_sde}, there is a unique solution of this SDE. Now consider associated BSDE
\begin{align}\label{eeq_BSDE_in_FBSDE}
	Y^{t,\xi}_s=&\GE\left[\Phi(X^{t,\xi}_T)+\int_s^T f(X^{t,\xi}_r,Y^{t,\xi}_r)dr+\int_s^T g(X^{t,\xi}_r,Y^{t,\xi}_{r}) d\qB_r\Big|\Omega_s\right],\quad s\in[t,T],
\end{align}
where $\Phi\colon \R\to\R$, $f\colon\R\times\R\to\R$ and $g\colon\R\times\R\to\s^d$ are deterministic and Lipschitz continuous functions w.r.t. $(x,y)$. Theorem \ref{tw_existence_bsde} guarantees the existence and uniqueness of the solution. The pair $(X_s^{t,\xi}, Y_s^{t,\xi})$ is the decoupled FBSDE with jumps. 

In the forthcoming work we will deal with the optimal control of these systems driven by the $G$-\levy processes with finite activity.

\section*{Appendix}

In the Appendix we will prove the Theorem \ref{tw_DPP}, \ref{tw_viscosity_sol_iPDE} and \ref{tw_representation_for_lip}. We will follow exactly the argument presented by Denis \emph{et al.} in \cite{Denis_function_spaces}, so for the sake of consistency of the paper we will just sketch the reasoning. Just as in \cite{Denis_function_spaces} the proofs will be preceded by a series of lemmas investigating the properties of some essential supremums. Namely, for $\zeta\in L^2(\tilde\Omega,\F_t,\P_0;\R^n)$, $0\leq s\leq t<T$ and a fixed $\phi\in C_{b,Lip}(\R^n\times\R^d)$ we introduce
\[
	\Lambda_{t,T}[\zeta]:=\esup_{\theta\in\a^{\u}_{t,T}}\E^{\P_0}[\phi(\zeta,B^{t,\theta}_T)|\F_t].
\]

\begin{lem}\label{lem_max_property_lambda}
	For any $\theta^1,\,\theta^2\in\a^{\u}_{t,T}$ there exists $\theta\in \a^{\u}_{t,T}$ s.t.
	\begin{equation}\label{eq_max_lambda_1}
		\E^{\P_0}[\phi(\zeta,B^{t,\theta}_T)|\F_t]=\E^{\P_0}[\phi(\zeta,B^{t,\theta^1}_T)|\F_t]\vee \E^{\P_0}[\phi(\zeta,B^{t,\theta^2}_T)|\F_t].
	\end{equation}
	Consequently, there exists a sequence $\{\theta^i\}_{i=1}^{\infty}$ in $\a^{\u}_{t,T}$ s.t.
	\begin{equation}\label{eq_max_lambda_2}
		\E^{\P_0}[\phi(\zeta,B^{t,\theta^i}_T)|\F_t]\nearrow \Lambda_{t,T}[\zeta],\ \P_0-a.s.
	\end{equation}
	Moreover, for each $s\leq t$ we have
	\begin{equation}\label{eq_max_lambda_and_Yan_commut}
		\E^{\P_0}[\esup_{\theta\in\a^{\u}_{t,T}}\E^{\P_0}[\phi(\zeta,B^{t,\theta}_T)|\F_t]|\F_s]=\esup_{\theta\in\a^{\u}_{t,T}}\E^{\P_0}[\phi(\zeta,B^{t,\theta}_T)|\F_s].
	\end{equation}
\end{lem}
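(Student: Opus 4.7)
The plan is to prove the three assertions in order, since (2) and (3) both reduce to (1) via standard essential-supremum machinery.

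For the first assertion I would exploit the fact that $\a^{\u}_{t,T}$ is closed under $\F_t$-measurable switching. Set
\[
A := \{\E^{\P}[\phi(\zeta,B^{t,\theta^1}_T)\mid\F_t]\geq \E^{\P}[\phi(\zeta,B^{t,\theta^2}_T)\mid\F_t]\}\in\F_t,
\]
and define $\theta := \theta^1\I_A + \theta^2\I_{A^c}$ componentwise (for $\theta^d$ as a random field in $z$). Verifying $\theta\in\a^{\u}_{t,T}$ is routine: $\F$-adaptedness/predictability is preserved because $A\in\F_t\subset \F_s$ for $s>t$; the pointwise constraint $(\theta^d(s,\cdot),\theta^{1,c}_s,\theta^{2,c}_s)\in\tilde\u$ holds because $\tilde\u$-membership is preserved under $\omega$-by-$\omega$ selection; and the integrability bound follows from the triangle inequality applied to $\theta^1$ and $\theta^2$. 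Since $A\in\F_t$, the L\'evy--It\^o integral splits linearly: $B^{t,\theta}_T = B^{t,\theta^1}_T\I_A + B^{t,\theta^2}_T\I_{A^c}$ (the Lebesgue and pathwise pieces are immediate, and the It\^o integral splits because $\I_A$ is $\F_t$-measurable, hence predictable on $]t,T]$). Taking the $\F_t$-conditional expectation of $\phi(\zeta,B^{t,\theta}_T) = \phi(\zeta,B^{t,\theta^1}_T)\I_A + \phi(\zeta,B^{t,\theta^2}_T)\I_{A^c}$ then yields exactly the pointwise maximum \eqref{eq_max_lambda_1}.

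The second assertion follows from (1) by the standard result that an upward-directed family of $\F_t$-measurable random variables admits a monotone increasing sequence converging a.s. to its essential supremum. Concretely, take any countable family $\{\tilde\theta^j\}$ whose conditional expectations form a version of $\Lambda_{t,T}[\zeta]$ (existence follows from the fact that $\Lambda_{t,T}[\zeta]$ as an essential supremum can be realized along some countable subfamily), and iteratively apply (1) to form $\theta^i\in\a^{\u}_{t,T}$ with $\E^{\P}[\phi(\zeta,B^{t,\theta^i}_T)\mid\F_t]$ equal to the maximum of the first $i$ conditional expectations in the countable family. This produces a monotone increasing sequence as in \eqref{eq_max_lambda_2}.

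For the third assertion I would prove both inequalities. The lower bound $\E^{\P}[\Lambda_{t,T}[\zeta]\mid\F_s] \geq \esup_{\theta}\E^{\P}[\phi(\zeta,B^{t,\theta}_T)\mid\F_s]$ follows immediately from the tower property and the fact that $\Lambda_{t,T}[\zeta]\geq \E^{\P}[\phi(\zeta,B^{t,\theta}_T)\mid\F_t]$ for every $\theta$. For the reverse, I would apply the sequence $\{\theta^i\}$ from (2) together with conditional monotone convergence:
\[
\E^{\P}[\Lambda_{t,T}[\zeta]\mid\F_s] = \lim_i \E^{\P}\bigl[\E^{\P}[\phi(\zeta,B^{t,\theta^i}_T)\mid\F_t]\mid\F_s\bigr] = \lim_i \E^{\P}[\phi(\zeta,B^{t,\theta^i}_T)\mid\F_s],
\]
and the last quantity is bounded above by the essential supremum on the right-hand side of \eqref{eq_max_lambda_and_Yan_commut}. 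The only delicate point in the whole proof is the first one, namely the measurability/admissibility verification for the concatenated control $\theta$ in (1); once that is settled, the rest is soft functional analysis and conditional-expectation manipulation identical to the $G$-Brownian case in \cite{Denis_function_spaces}.
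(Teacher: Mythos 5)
Your proof is correct, and for the first two assertions it is essentially identical to the paper's argument: the paper defines the very same set $A$ and the concatenated control $\theta_s:=\I_A\theta^1_s+\I_{A^c}\theta^2_s$, and obtains \eqref{eq_max_lambda_1} and \eqref{eq_max_lambda_2} exactly as you do (the paper simply defers the admissibility and splitting checks to Lemma 41 of Denis \emph{et al.}, which you spell out). The only divergence is in \eqref{eq_max_lambda_and_Yan_commut}: the paper cites Yan's commutation theorem as a black box, whereas you prove the identity directly by two inequalities --- the easy one from the tower property and monotonicity of conditional expectation, the other by pushing the monotone sequence from \eqref{eq_max_lambda_2} through conditional monotone convergence. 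This is in effect an inlined proof of the commutation theorem in the special case at hand; it buys self-containedness at the cost of a few extra lines, and both routes rest on the same upward-directedness property established in part one. No gaps.
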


\begin{proof}
	The proof is identical to the proof of Lemma 41 in \cite{Denis_function_spaces}. We define a set\linebreak $A:=\{\omega\in\tilde\Omega\colon \E^{\P_0}[\phi(\zeta,B^{t,\theta^1}_T)|\F_t]\geq \E^{\P_0}[\phi(\zeta,B^{t,\theta^2}_T)|\F_t]\}$ and we put 
	\[
		\theta_s:=\I_A\theta^1_s+\I_{A^c}\theta^2_s.
	\]
	Then $\theta\in\a^{\u}_{t,T}$ and we get directly eq. \eqref{eq_max_lambda_1} and \eqref{eq_max_lambda_2}. Eq. \eqref{eq_max_lambda_and_Yan_commut} is the consequence of eq. \eqref{eq_max_lambda_1} and Yan's commutation theorem (see Theorem A3 in \cite{Peng_Yan}.)
\end{proof}

\begin{lem}\label{lem_properies_lambda}
	For every $0\leq t<T$ we have that $\Lambda_{t,T}[.]\colon L^2(\tilde\Omega,\F_t,\P_0;\R^n)\to L^2(\tilde\Omega,\F_t,\P_0;\R)$ is bounded by the bound of $\phi$ and Lipschitz continuous with the same constant as $\phi$, i.e.
	\begin{enumerate}
		\item $\Lambda_{t,T}[\zeta]\leq C_{\phi}$,
		\item $|\Lambda_{t,T}[\zeta]-\Lambda_{t,T}[\zeta']|\leq L_{\phi} |\zeta-\zeta'|$.
		\item $\Lambda_{t,T}[x]$ is deterministic for every $x\in\R^n$ and $\Lambda_{t,T}[x]=\Lambda_{0,T-t}[x]$.
		\item Put $u_{t,T}(x):=\Lambda_{t,T}[x]$ for every $x\in\R^n$. Then for every $\zeta\in L^2(\tilde\Omega,\F_t,\P_0;\R^n)$ we have
		\[
			u_{t,T}(\zeta)=\Lambda_{t,T}[\zeta],\ \P_0-a.s.
		\]
	\end{enumerate}
	\end{lem}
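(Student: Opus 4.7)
The plan is to verify the four assertions in turn, using Lemma~\ref{lem_max_property_lambda} as the principal tool for manipulating essential suprema, together with the independence structure between $\F_t$ and $\fil^t$ under $\P$.

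Properties (1) and (2) reduce to pointwise estimates that survive the essential supremum. Since $|\phi|\le C_\phi$, every $\E^{\P}[\phi(\zeta,B^{t,\theta}_T)\mid\F_t]$ is bounded by $C_\phi$, giving (1). For (2) I would fix $\theta\in\a^{\u}_{t,T}$ and use the $\F_t$-measurability of $\zeta,\zeta'$ together with the Lipschitz property of $\phi$ to obtain
\[
\E^{\P}[\phi(\zeta,B^{t,\theta}_T)\mid\F_t]\le \E^{\P}[\phi(\zeta',B^{t,\theta}_T)\mid\F_t]+L_\phi|\zeta-\zeta'|.
\]
Taking essential supremum over $\theta$ first on the right (possible since the bound does not depend on the left-hand control) and then on the left yields $\Lambda_{t,T}[\zeta]-\Lambda_{t,T}[\zeta']\le L_\phi|\zeta-\zeta'|$, and swapping $\zeta\leftrightarrow\zeta'$ gives the Lipschitz bound.

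For (3), the key observation is that the essential supremum over $\fil$-adapted controls agrees with the supremum over controls adapted to the ``future'' filtration $\fil^t$, which by independence yields a deterministic value. The technical device is a disintegration/section argument: because $\fil^t$ is $\P$-independent of $\F_t$, for $\P$-a.e.\ $\omega$ the slice $\theta^{\omega}_s(\omega'):=\theta_s(\omega\otimes_t\omega')$ (pasting the $\F_t$-part of $\omega$ with the $\fil^t$-part of $\omega'$) lies in $\a^{\u}_{t,T}$ relative to $\fil^t$, and
\[
\E^{\P}[\phi(x,B^{t,\theta}_T)\mid\F_t](\omega)=\E^{\P}[\phi(x,B^{t,\theta^{\omega}}_T)].
\]
Taking esssup on the left and sup on the right over all $\fil^t$-adapted controls, both sides coincide and are deterministic. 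The identity $\Lambda_{t,T}[x]=\Lambda_{0,T-t}[x]$ then follows because $(W_{t+s}-W_t,N_{t+s}-N_t)_{s\in[0,T-t]}$ has the same law under $\P$ as $(W_s,N_s)_{s\in[0,T-t]}$, so the time-shift $\theta\mapsto\theta(\cdot+t)$ sets up a bijection between $\a^{\u}_{t,T}$ and $\a^{\u}_{0,T-t}$ preserving the law of the \levy--\ito\ integral.

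Finally, for (4) I would approximate $\zeta\in L^2(\Omega,\F_t,\P;\R^n)$ by $\F_t$-simple random variables $\zeta_k=\sum_{j}x_j\I_{A_j}$ with $\{A_j\}\subset\F_t$ a finite partition. For each choice of controls $\theta^j\in\a^{\u}_{t,T}$ the patched process $\theta:=\sum_j\theta^j\I_{A_j}$ is again in $\a^{\u}_{t,T}$ (since $A_j\in\F_t\subset\F_s$ for $s\ge t$), and
\[
\E^{\P}[\phi(\zeta_k,B^{t,\theta}_T)\mid\F_t]=\sum_j\I_{A_j}\E^{\P}[\phi(x_j,B^{t,\theta^j}_T)\mid\F_t].
\]
Invoking Lemma~\ref{lem_max_property_lambda} to pick maximizing sequences for each $x_j$ and patching them in the same way gives $\Lambda_{t,T}[\zeta_k]=\sum_j u_{t,T}(x_j)\I_{A_j}=u_{t,T}(\zeta_k)$, after which passing to the limit $k\to\infty$ via (2) and the continuity of $u_{t,T}$ (a direct consequence of (2) applied to constants) yields $\Lambda_{t,T}[\zeta]=u_{t,T}(\zeta)$ $\P$-a.s.

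I expect the main obstacle to be the disintegration step in (3): although morally standard, one must verify carefully that the sectioned control $\theta^{\omega}$ inherits $\fil^t$-predictability, the pointwise constraint $(\theta^{d,\omega}(s,\cdot),\theta^{1,c,\omega}_s,\theta^{2,c,\omega}_s)\in\tilde\u$, and the integrability condition for $\P$-a.e.\ $\omega$, and that the essential supremum transfers cleanly from the enlarged to the restricted class of controls.
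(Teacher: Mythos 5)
Your treatment of (1), (2) and (4) matches the paper's: pointwise bounds surviving the essential supremum for (1)--(2), and for (4) the reduction to $\F_t$-simple $\zeta=\sum_j x_j\I_{A_j}$ via patched controls $\sum_j\theta^j\I_{A_j}$ followed by a density/Lipschitz passage using (2). Where you genuinely diverge is the proof that $\Lambda_{t,T}[x]$ is deterministic. The paper argues in the opposite direction: it introduces the subclass $\tilde\a^{\u}_{t,T}$ of $\fil^t$-adapted (hence $\F_t$-independent) controls, observes that for these the conditional expectation is deterministic and bounded above by $\Lambda_{t,T}[x]$, and then closes the gap by noting that finite $\F_t$-measurable patchings $\sum_{j}\I_{A_j}\theta^j$ of such controls are \emph{dense} in $\a^{\u}_{t,T}$, so that the essential supremum over all of $\a^{\u}_{t,T}$ is still dominated by $\sup_{\theta\in\tilde\a^{\u}_{t,T}}\E^{\P}[\phi(x,B^{t,\theta}_T)]$. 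You instead disintegrate an \emph{arbitrary} control into its $\omega$-sections $\theta^{\omega}$ and argue that each section is an admissible $\fil^t$-adapted control. Both routes are legitimate and yield the same two-sided comparison, but they trade different technical debts: the density route requires an approximation argument in the control class (which the paper borrows from Denis--Hu--Peng without proof), while your section route requires a pasting operation $\omega\otimes_t\omega'$ that is only literally available after transferring to the canonical space of $(W,N)$, plus a verification that the section inherits predictability, the pointwise constraint in $\tilde\u$ and the integrability condition for $\P$-a.e.\ $\omega$ --- precisely the obstacle you flag. Since the filtration here is generated by $(W,N)$, this transfer is possible, so your argument can be completed; it is simply heavier at the step where the paper's density argument is lighter. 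Your derivation of $\Lambda_{t,T}[x]=\Lambda_{0,T-t}[x]$ from the stationarity of the increments of $(W,N)$ and the induced time-shift bijection of control classes is the intended (and in the paper unstated) justification.
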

\begin{proof}
	Point 1 follows directly from the definition of $\Lambda$ and boundedness of $\phi$, whereas point 2 is the consequence of sublinearity of essential supremum and Lipschitz continuity of $\phi$ For details see Lemma 42 of \cite{Denis_function_spaces}. 
	
	To prove point 3 we introduce the set
	\[
		\tilde \a_{t,T}^{\u}:=\{(\theta^d,\theta^{1,c},\theta^{2,c})\in \a_{t,T}^{\u}\colon (\theta^{1,c},\theta^{2,c})\textrm { is } \fil^t \textrm{-adapted and }\theta^d \textrm { is } \fil^t \textrm{-predictable}\}.
	\]
Then we have that for any $\theta\in\tilde\a^{\u}_{t,T}$ we have that $B^{t,\theta}_T$ is independent of $\F_t$ and consequently
	\begin{align}\label{eq_lemma_lambda_eq1}
		\sup_{\theta\in\tilde\a_{t,T}^{\u}}\E^{\P_0}[\phi(x,B^{t,\theta}_T)]=\esup_{\theta\in\tilde\a_{t,T}^{\u}}\E^{\P_0}[\phi(x,B^{t,\theta}_T)|\F_t]\leq \Lambda_{t,T}[x].
	\end{align}
		Note also that
	\[
		\tilde \a:=\left\{\sum_{j=1}^N\I_{A_j}\theta^j\colon \{A_j\}_{j=1}^N \textrm{ is an }\F_t \textrm{-partition of }\tilde\Omega,\ \theta^j\in \tilde\a^{\u}_{t,T} \right\}
	\]
	is dense in $\a^{\u}_{t,T}$. Hence
	\begin{align}\label{eq_lemma_lambda_eq2}
	\Lambda_{t,T}[x]&=\esup_{\sum_{j=1}^N\I_{A_j}\theta^j\in\tilde\a}\E^{\P_0}[\phi(x,B^{t,\sum_{j=1}^N\I_{A_j}\theta^j}_T)|\F_t] =\esup_{\sum_{j=1}^N\I_{A_j}\theta^j\in\tilde\a}\sum_{j=1}^N\I_{A_j}\E^{\P_0}[\phi(x,B^{t,\theta^j}_T)]\notag\\
	&\leq \esup_{\sum_{j=1}^N\I_{A_j}\theta^j\in\tilde\a}\sum_{j=1}^N\I_{A_j}\sup_{\theta\in\tilde\a_{t,T}^{\u}} \E^{\P_0}[\phi(x,B^{t,\theta}_T)]=\sup_{\theta\in\tilde\a_{t,T}^{\u}} \E^{\P_0}[\phi(x,B^{t,\theta}_T)]
	\end{align}
	Compare eq. \eqref{eq_lemma_lambda_eq1} and \eqref{eq_lemma_lambda_eq2} to get point 3. (See also Lemma 43 in \cite{Denis_function_spaces}).
	
	Point 4 is easy to get for simple random variables of the form $\zeta=\sum_{i=1}^n\I_{A_i}x_i$, $x_i\in\R^n,\ A_i\in\F_t$, the assertion for general $\zeta$ is obtained via the regularity of $\Lambda_{t,T}[.]$ from point 2. See Lemma 44 in \cite{Denis_function_spaces} for details.
\end{proof}

\begin{prop}\label{prop_Dyn_consist}
	Let $\xi\in L^2(\tilde\Omega,\F_s,\P_0;\R^n)$, $\psi\in C_{b,Lip}(\R^n\times\R^d\times\R^d)$ and $0\leq s<t<T$. Then
	\[
		\esup_{\theta\in\a^{\u}_{s,T}}\E^{\P_0}[\psi(\xi,B^{s,\theta}_t,B^{t,\theta}_T)|\F_s]=
		 \esup_{\theta\in\a^{\u}_{s,t}}\,\E^{\P_0}\left[\esup_{\tilde{\theta}\in\a^{\u}_{t,T}}\E^{\P_0}[\psi(x,y,B^{t,\tilde{\theta}}_T)|\F_t]|_{\substack{x=\xi\\y=B^{s,\theta}_t}}\,\big|\F_s\right].
	\]
\end{prop}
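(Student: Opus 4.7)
The plan is to prove the two inequalities separately, using the family of auxiliary value functions from Lemma \ref{lem_properies_lambda}. Fix $\psi \in C_{b,Lip}(\R^n \times \R^d \times \R^d)$ and, treating $(x,y)$ as a parameter, define
\[
    u(x,y) := \esup_{\tilde{\theta}\in\a^{\u}_{t,T}}\E^{\P}[\psi(x,y,B^{t,\tilde{\theta}}_T)|\F_t].
\]
By Lemma \ref{lem_properies_lambda} (points 1--3) applied to $\phi(\cdot,\cdot) = \psi$ with $(x,y)\in\R^{2n+d}$ as the first slot, $u$ is deterministic, bounded by $\|\psi\|_\infty$, and Lipschitz in $(x,y)$. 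Moreover, since $\xi\in L^2(\F_s)\subset L^2(\F_t)$ and $B^{s,\theta^1}_t\in L^2(\F_t)$ for any $\theta^1\in\a^{\u}_{s,t}$, Lemma \ref{lem_properies_lambda}(4) yields
\[
    u(\xi, B^{s,\theta^1}_t) = \esup_{\tilde{\theta}\in\a^{\u}_{t,T}}\E^{\P}[\psi(\xi,B^{s,\theta^1}_t,B^{t,\tilde{\theta}}_T)|\F_t],\quad \P\text{-a.s.}
\]

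For the inequality ``$\leq$'', I would fix any $\theta\in\a^{\u}_{s,T}$ and split it into $\theta^1 := \theta|_{]s,t]}\in\a^{\u}_{s,t}$ and $\theta^2 := \theta|_{]t,T]}\in\a^{\u}_{t,T}$, so that $B^{s,\theta}_t = B^{s,\theta^1}_t$ and $B^{t,\theta}_T = B^{t,\theta^2}_T$. By the tower property and the defining bound $\E^{\P}[\psi(\cdot,\cdot,B^{t,\theta^2}_T)|\F_t] \leq u(\cdot,\cdot)$ evaluated at $(\xi, B^{s,\theta^1}_t)$,
\[
    \E^{\P}[\psi(\xi,B^{s,\theta}_t,B^{t,\theta}_T)|\F_s] = \E^{\P}\bigl[\E^{\P}[\psi(\xi,B^{s,\theta^1}_t,B^{t,\theta^2}_T)|\F_t]\,\big|\,\F_s\bigr] \leq \E^{\P}[u(\xi, B^{s,\theta^1}_t)|\F_s].
\]
Taking $\esup$ over $\theta\in\a^{\u}_{s,T}$ on the left (which amounts to varying $\theta^1\in\a^{\u}_{s,t}$ since $\theta^2$ is free on the right) gives the ``$\leq$'' part.

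For the inequality ``$\geq$'', I would fix $\theta^1\in\a^{\u}_{s,t}$ and construct an admissible process on $]s,T]$ that nearly realizes $u(\xi,B^{s,\theta^1}_t)$ conditionally. Since $u$ is bounded and Lipschitz and $(\xi,B^{s,\theta^1}_t)\in L^2(\F_t)$, approximate $(\xi,B^{s,\theta^1}_t)$ in $L^2$ by simple $\F_t$-measurable vectors $\sum_{k=1}^{N_n}\I_{A_k^n}(x_k^n,y_k^n)$ with $\{A_k^n\}_k$ an $\F_t$-partition of $\Omega$. For each fixed $(x_k^n,y_k^n)$, the upward-directedness from Lemma \ref{lem_max_property_lambda} (eq.\ \eqref{eq_max_lambda_2}) yields a sequence $\tilde{\theta}^{n,k,i}\in\a^{\u}_{t,T}$ with $\E^{\P}[\psi(x_k^n,y_k^n,B^{t,\tilde{\theta}^{n,k,i}}_T)|\F_t] \nearrow u(x_k^n,y_k^n)$ as $i\to\infty$. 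Patch these via
\[
    \theta^{2,n,i}_r := \sum_{k=1}^{N_n}\I_{A_k^n}\,\tilde{\theta}^{n,k,i}_r,\quad r\in\,]t,T],
\]
which lies in $\a^{\u}_{t,T}$: the $\F_t$-measurable indicators preserve $\fil$-predictability (resp.\ $\fil$-adaptedness), and all integrability and pointwise constraints in Definition \ref{def_theta} are stable under such patching. Concatenating $\theta^1$ on $]s,t]$ with $\theta^{2,n,i}$ on $]t,T]$ gives $\theta^{n,i}\in\a^{\u}_{s,T}$, and on each $A_k^n$,
\[
    \E^{\P}[\psi(\xi,B^{s,\theta^1}_t,B^{t,\theta^{n,i}}_T)|\F_t] = \E^{\P}[\psi(\xi,B^{s,\theta^1}_t,B^{t,\tilde{\theta}^{n,k,i}}_T)|\F_t].
\]
Combining the $L^2$-approximation of $(\xi,B^{s,\theta^1}_t)$ with the Lipschitz estimates for $\psi$ and $u$, and sending first $i\to\infty$ (monotone convergence of the inner conditional expectation to $u(x_k^n,y_k^n)$) and then $n\to\infty$, we obtain a sequence $\theta^{n,i(n)}\in\a^{\u}_{s,T}$ with $\E^{\P}[\psi(\xi,B^{s,\theta^{n,i(n)}}_t,B^{t,\theta^{n,i(n)}}_T)|\F_s]\to \E^{\P}[u(\xi,B^{s,\theta^1}_t)|\F_s]$ in $L^1$ (using boundedness of $\psi$ and dominated convergence for the outer conditioning). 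Taking $\esup$ over $\theta^1\in\a^{\u}_{s,t}$ completes the ``$\geq$'' direction.

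The main obstacle will be the ``$\geq$'' direction, specifically justifying the double approximation rigorously: one must check that the patched process $\theta^{2,n,i}$ genuinely satisfies the predictability/integrability conditions of Definition \ref{def_theta} (which is fine because $\F_t\subset\F_r$ for $r>t$), and then control simultaneously the pointwise monotone convergence in $i$ (coming from Lemma \ref{lem_max_property_lambda}) and the Lipschitz/$L^2$ approximation in $n$. A clean way to handle this is to diagonalize using the Lipschitz bound $|u(x,y)-u(x',y')|\leq L(|x-x'|+|y-y'|)$ together with the analogous bound for $\E^{\P}[\psi(\cdot,\cdot,B^{t,\theta^{2,n,i}}_T)|\F_t]$, so that the errors from the two approximations can be added and sent to zero along a diagonal subsequence.
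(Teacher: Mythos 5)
Your proof is correct and follows essentially the same route as the paper's: the paper compresses both directions into a citation of eq.~\eqref{eq_max_lambda_and_Yan_commut} (Yan's commutation theorem, whose content is exactly your ``$\geq$'' construction via the upward-directedness of Lemma~\ref{lem_max_property_lambda}) together with the substitution property of Lemma~\ref{lem_properies_lambda}, point 4. The only remark is that your extra layer of simple-function approximation in the ``$\geq$'' step is unnecessary: eq.~\eqref{eq_max_lambda_2} applies directly with the random $\zeta=(\xi,B^{s,\theta^1}_t)\in L^2(\Omega,\F_t,\P;\R^{n+d})$, after which conditional monotone convergence and the concatenation $\theta^1\I_{]s,t]}+\tilde\theta^i\I_{]t,T]}\in\a^{\u}_{s,T}$ finish the argument without re-deriving Lemma~\ref{lem_properies_lambda}, point 4.
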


\begin{proof}
This is the consequence of eq. \eqref{eq_max_lambda_and_Yan_commut} and Lemma \ref{lem_properies_lambda}, point 4.
\end{proof}
Now we are ready to prove DPP.
\begin{proof}[Proof of Theorem \ref{tw_DPP}]
	The assertion is just an easy application of Proposition \ref{prop_Dyn_consist}. To be more specific, fix $h\in[0,T-t]$. By the definition of $u$ we have
	\begin{align*}
		u(t,x)&=\sup_{\theta\in\a_{t,T}^{\u}}\E^{\P_0} [\phi(x+B^{t,\theta}_T)]=\sup_{\theta\in\a_{t,T}^{\u}}\E^{\P_0} \left[\phi\left(x+B^{t,\theta}_{t+h}+B^{t+h,\theta}_T\right)|\F_t\right].
	\end{align*}
	Hence applying Proposition \ref{prop_Dyn_consist} to $\psi(x,y,z)=\phi(x+y+z)$ we get that
	\begin{align*}
		u(t,x)&=\sup_{\theta\in\a^{\u}_{t,t+h}}\E^{\P_0}\left[\sup_{\tilde \theta\in\a^{\u}_{t+h,T}} \E^{\P_0}\left[\phi(x+y+B^{t+h,\tilde \theta}_T)|\F_{t+h}\right]|_{y=B^{t,\theta}_{t+h}} |\F_t \right]\\&=\sup_{\theta\in\a^{\u}_{t,t+h}}\E^{\P_0}\left[u(t+h,x+B^{t,\theta}_{t+h})\right]\qedhere
	\end{align*}
\end{proof}

\begin{proof}[Proof of Theorem \ref{tw_viscosity_sol_iPDE}]
Let $\psi\in C_b^{2,3}([0,T]\times\R^d)$ be such that $\psi\geq u$ and for a fixed\break $(t,x)\in[0,T]\times \R^d$ we have $\psi(t,x)=u(t,x)$.

We introduce now the integral with jumps cut at the fixed level $\epsilon>0$:
\[
	B_T^{t,\theta,\epsilon}:=\int_t^T \theta^{1,c}_sds+\int_t^T\theta_s^{2,c}dW_s+\int_t^T\int_{\R^{d}\setminus B(0,\epsilon)}\theta^d(s,z)N(ds,dz).
\]
By the \ito formula we have
\begin{align*}
	\psi&\left(t+h,x+B^{t,\theta,\epsilon}_{t+h}\right)-\psi(t,x)=\int_t^{t+h}\frac{\partial\psi}{\partial s}\left(s,x+B^{t,\theta,\epsilon}_{s-}\right)ds\\&+\int_t^{t+h}\langle D\psi\left(s,x+B^{t,\theta,\epsilon}_{s-}\right),\theta^{1,c}_s\rangle ds+\int_t^{t+h}\langle D\psi\left(s,x+B^{t,\theta,\epsilon}_{s-}\right),\theta^{2,c}_sdW_s\rangle\\
	 &+\int_t^{t+h}\frac{1}{2}\tr\left[\theta^{2,c}_s\left(\theta^{2,c}_s\right)^TD^2\psi\left(s,x+B^{t,\theta,\epsilon}_{s-}\right)\right]ds\\
	 &+\int_t^{t+h}\int_{\R^{d}\setminus B(0,\epsilon)}\left[\psi\left(s,x+B^{t,\theta,\epsilon}_{s-}+\theta^d(s,z)\right)-\psi\left(s,x+B^{t,\theta,\epsilon}_{s-}\right)\right]N(ds,dz).
\end{align*}
Taking $\E^{\P_0}$ we get the following
\[
	 \E^{\P_0}\left[\psi\left(t+h,x+B^{t,\theta,\epsilon}_{t+h}\right)-\psi(t,x)\right]=:\E^{\P_0}[I_1]+\E^{\P_0}[I_2]+\E^{\P_0}[I_3]+\E^{\P_0}[I_4]+\E^{\P_0}[I_5].
\]
Note that $\E^{\P_0}[I_3]=0$.
Moreover,  $\left\{\frac{\partial\psi}{\partial s}+\langle D\psi,\theta^{1,c}_s\rangle+\frac{1}{2}\tr\left[\theta^{2,c}_s\left(\theta^{2,c}_s\right)^TD^2\psi\right]\right\}(s,y)$ is uniformly Lipschitz continuous in $(s,y)$. Note also the following estimate
\[
	\sup_{\theta\in\a^{\u}_{t,t+h}}\E^{\P_0}[|B_{t+h}^{t,\theta,\epsilon}|]\leq C[h^{1/2}+h]
\]
where the constant $C$ is independent of $\epsilon$ and $h$.

Combining those two results we get the following estimate (the constant $C$ may vary from line to line)
\begin{align}\label{eq_viscosity_estim1}
	\E^{\P_0}&\left[I_1+I_2+I_4\right] \notag\\&=\E^{\P_0}\left[\int_t^{t+h}\left\{\frac{\partial\psi}{\partial s}+\langle D\psi,\theta^{1,c}_s\rangle +\frac{1}{2}\tr\left[\theta^{2,c}_s\left(\theta^{2,c}_s\right)^TD^2\psi\right]\right\}\left(s,x+B^{t,\theta,\epsilon}_{s-}\right)ds\right] \notag\\
	&\leq \E^{\P_0}\left[\int_t^{t+h}\left\{\frac{\partial\psi}{\partial s}+\langle D\psi,\theta^{1,c}_s\rangle +\frac{1}{2}\tr\left[\theta^{2,c}_s\left(\theta^{2,c}_s\right)^TD^2\psi\right]\right\}\left(t,x\right)ds\right] \notag\\&\quad+C(h^{1/2}+h).
\end{align}

Similarly
\begin{align}\label{eq_viscosity_estim2}
	 \E^{\P_0}\left[I_5\right]&=\E^{\P_0}\left[\int_t^{t+h}\!\!\int_{|z|\geq \epsilon}\left[\psi\left(s,x+B^{t,\theta,\epsilon}_{s-}+\theta^d(s,z)\right)-\psi\left(s,x+B^{t,\theta,\epsilon}_{s-}\right)\right]N(ds,dz)\right] \notag\\
	 &=\E^{\P_0}\left[\int_t^{t+h}\!\!\int_{|z|\geq \epsilon}\left[\psi\left(s,x+B^{t,\theta,\epsilon}_{s-}+\theta^d(s,z)\right)-\psi\left(s,x+B^{t,\theta,\epsilon}_{s-}\right)\right] \mu(dz)ds\right] \notag\\
	&\leq \E^{\P_0}\left[\int_t^{t+h}\int_{|z|\geq \epsilon}\!\!\left[\psi\left(s,x+\theta^d(s,z)\right)-\psi(s,x)\right]\mu(dz) ds\right]\notag\\&\quad+\mu(\{|z|\geq \epsilon\})\,C(h+h^{1/2})h 
\end{align}

Finally, let us look for an estimate for $\E^{\P}[\psi(t+h,x+B^{t,\theta}_{t+h})-\psi(t+h,x+B^{t,\theta,\epsilon}_{t+h})]$. We define the random measure $\pi^{\theta}_s(\omega)=v\in\v$ iff $\theta^d(s,.)(\omega)=g_v$ and $\R^{\epsilon,\theta}_s(\omega):=\theta^d(s,\R^d\setminus B(0,\epsilon))(\omega)$. Note that the function $(s,\omega)\mapsto \pi^{\theta}_s(A)(\omega)$ ($A\in\B(\R^d)$ is fixed) is $\B([0,T])\otimes \F_T$-measurable. Then by Lipschitz continuity of $\psi$ we get for some $0<q<1$ that
\begin{align}\label{eq_viscosity_estim3}
	\E^{\P_0}&\left[\psi(t+h,x+B^{t,\theta}_{t+h})-\psi(t+h,x+B^{t,\theta,\epsilon}_{t+h})\right]\leq C \E^{\P_0}\left[\left|\int_t^{t+h}\!\!\int_{0<|z|<\epsilon}\theta^d(s,z)N(ds,dz)\right|\right] \notag\\
	&\leq C\, \E^{\P_0}\left[\int_t^{t+h}\int_{0<|z|<\epsilon}\left|\theta^d(s,z)\right| \mu(dz)ds\right]
	\leq	C\epsilon^{1-q}\, \E^{\P_0}\left[\int_t^{t+h}\int_{\r0}\left|\theta^d(s,z)\right|^q \mu(dz)ds\right]\notag\\
	&=C\epsilon^{1-q}\, \E^{\P_0}\left[\int_t^{t+h}\!\!\int_{\r0}|z|^q\ \pi^{\theta}_s(dz)ds\right]\leq C\epsilon^{1-q}\, h\, \sup_{v\in\v}\,\int_{\r0}|z|^q\ v(dz).
\end{align}

Connecting the estimates \eqref{eq_viscosity_estim1}, \eqref{eq_viscosity_estim2} and \eqref{eq_viscosity_estim3} with Theorem \ref{tw_DPP} we get
\begin{align*}
	0&=\sup_{\theta\in\a^{\u}_{t,t+h}}\!\!\E^{\P_0}\left[u(t+h,x+B^{t,\theta}_{t+h})-u(t,x)\right]	
	\leq\sup_{\theta\in\a^{\u}_{t,t+h}}\!\!\E^{\P_0}\left[\psi(t+h,x+B^{t,\theta}_{t+h})-\psi(t,x)\right]\displaybreak[1]\\
	&=\sup_{\theta\in\a^{\u}_{t,t+h}}\!\!\E^{\P_0}\left[\psi(t+h,x+B^{t,\theta}_{t+h})-\psi(t+h,x+B^{t,\theta,\epsilon}_{t+h})+\psi(t+h,x+B^{t,\theta,\epsilon}_{t+h})-\psi(t,x)\right]
		\\
	&\leq\sup_{\theta\in\a^{\u}_{t,t+h}}\left\{\E^{\P_0}\left[\int_t^{t+h}\left\{\frac{\partial\psi}{\partial s}+\langle D\psi,\theta^{1,c}_s\rangle +\frac{1}{2}\tr\left[\theta^{2,c}_s\left(\theta^{2,c}_s\right)^TD^2\psi\right]\right\}\left(t,x\right)ds\right.\right.\\
	&\quad+\left.\left.\int_t^{t+h}\int_{|z|\geq \epsilon}\!\!\left[\psi\left(s,x+\theta^d(s,z)\right)-\psi(s,x)\right]\mu(dz)ds\right]\right\}\\
	&\quad+\mu(\R^d\setminus B(0,\epsilon))\,C(h+h^{1/2})h+\sup_{v\in\v}\,\int_{\r0}|z|^q\ v(dz)\,C\,h\epsilon^{1-q}.
\end{align*}
We divide both sides by $h$ and go with it to $0$ ($\epsilon$ is still fixed). We get then by Lebesgue differentiation theorem that:
\begin{align*}
	0
	&\leq\sup_{\theta\in\a^{\u}_{t,t+h}}\left\{\E^{\P_0}\left[\left\{\frac{\partial\psi}{\partial t}+\langle D\psi,\theta^{1,c}_t\rangle +\frac{1}{2}\tr\left[\theta^{2,c}_t\left(\theta^{2,c}_t\right)^TD^2\psi\right]\right\}\left(t,x\right)\right.\right.\\
	&\quad+\left.\left.\int_{|z|\geq \epsilon}\!\!\left[\psi\left(t,x+\theta^d(t,z)\right)-\psi(t,x)\right]\mu(dz)\right]\right\}
	+\sup_{v\in\v}\,\int_{\r0}|z|^q\ v(dz)\,C\epsilon^{1-q}.
\end{align*}
Now we go with $\epsilon$ to $0$. Since $\sup_{v\in\v}\int_{B(0,1)}|z|^qv(dz)<\infty$ for some $0<q<1$ (see Assumption \ref{ass1}) and $\sup_{v\in\v}\int_{|z|\geq1}|z|^qv(dz)\leq \sup_{v\in\v}\int_{|z|\geq1}|z|v(dz)<\infty$, we conclude that the last term will decrease to $0$. We also note that $\{|z|>\epsilon\}$ will increase to $\r0$. Therefore we can write
\begin{align*}
	0
	&\leq\sup_{\theta\in\a^{\u}_{t,t+h}}\left\{\E^{\P_0}\left[\left\{\frac{\partial\psi}{\partial t}+\langle D\psi,\theta^{1,c}_t\rangle +\frac{1}{2}\tr\left[\theta^{2,c}_t\left(\theta^{2,c}_t\right)^TD^2\psi\right]\right\}\left(t,x\right)\right.\right.\\
	&\quad+\left.\left.\int_{\r0}\left[\psi\left(t,x+\theta^d(t,z)\right)-\psi(t,x)\right] \mu(dz)\right]\right\}
	\\
	&=\sup_{\theta\in\a^{\u}_{t,t+h}}\left\{\E^{\P_0}\left[\left\{\frac{\partial\psi}{\partial t}+\langle D\psi,\theta^{1,c}_t\rangle +\frac{1}{2}\tr\left[\theta^{2,c}_t\left(\theta^{2,c}_t\right)^TD^2\psi\right]\right\}\left(t,x\right)\right.\right.\\
	&\quad+\left.\left.\int_{\r0}\left[\psi\left(t,x+z\right)-\psi(t,x)\right] \pi^{\theta}_t(dz)\right]\right\}
	\\
	&=\!\sup_{(v,p,Q)\in\u}\!\left\{\left[\frac{\partial\psi}{\partial t}+\langle D\psi,p\rangle +\frac{\tr\left[QQ^TD^2\psi\right]}{2}\right](t,x)
	+\int_{\r0}\left[\psi(t,x+z)-\psi(t,x)\right] v(dz)\right\}.
\end{align*}
The first inequality is the consequence of changing the measure formula for pushforward measures. The random measure $\pi^{\theta}_t$ is defined as by equation \eqref{eq_viscosity_estim3} and takes values in $\v$ for almost all $(t,\omega)$. The last equality is the consequence of the fact that deterministic integrands belong to $\a^{\u}_{0,T}$. We can see now that $u$ is the viscosity subsolution of the integropartial PDE in Theorem \ref{tw_viscosity_sol_iPDE}. By the same argument one can prove that $u$ is also a supersolution.
\end{proof}

\begin{proof}[Proof of Theorem \ref{tw_representation_for_lip}]
The assertion of this theorem is just an easy consequence of Proposition \ref{prop_Dyn_consist}, the definition of the sublinear expectation $\GE[.]$, Theorem \ref{tw_viscosity_sol_iPDE} and the uniqueness of the  viscosity solution of the IPDE. 
\end{proof}

\section*{Acknowledgements}
The author would like to thank An Ta Thi Kieu (Centre of Mathematics for Applications, University of Oslo) for many fruitful discussions and careful reading of the manuscript. Moreover, the author is very grateful to the anonymous referee for helpful remarks to Section 3.

\end{document}